\definecolor{tocolor}{rgb}{.1,.1,.5}
\definecolor{urlcolor}{rgb}{.2,.2,.6}
\definecolor{linkcolor}{rgb}{.1,.4,.6}
\definecolor{citecolor}{rgb}{.6,.3,.1}
\newcommand{\C}{\mathbb{C}}
\newcommand{\Z}{\mathbb{Z}}
\mathchardef\mhyphen="2D
\renewcommand{\1}{\mathbbm{1}}
\newcommand{\arsim}{\xrightarrow{\sim}}
\newtheorem{theorem}{Theorem}[section]
\newtheorem{lem}[theorem]{Lemma}
\newtheorem{prop}[theorem]{Proposition}
\newtheorem{cor}[theorem]{Corollary}
\newtheorem{defn}[theorem]{Definition}
\numberwithin{equation}{section}
\theoremstyle{definition}
\newtheorem{rmk}[theorem]{Remark}
\newcommand{\E}{\mathcal{E}}
\newcommand{\F}{\mathcal{F}}
\newcommand{\V}{\mathcal{V}}
\newcommand{\W}{\mathcal{W}}
\newcommand{\OO}{\mathcal{O}}
\newcommand{\K}{\mathcal{K}}
\newcommand{\II}{\mathscr{I}}
\newcommand{\PP}{\mathscr{P}}
\newcommand{\QQ}{\mathscr{Q}}
\newcommand{\RR}{\mathscr{R}}
\newcommand{\G}{\mathcal{G}}
\newcommand{\M}{\mathfrak{M}}
\newcommand{\TT}{\mathscr{T}}
\newcommand{\SSS}{\mathcal{S}}
\newcommand{\Aut}{\textnormal{Aut}}
\newcommand{\Sch}{\textnormal{Sch}}
\newcommand{\Ring}{\textnormal{Ring}}
\newcommand{\Ab}{\textnormal{Ab}}
\newcommand{\Supp}{\textnormal{Supp}}
\newcommand{\Gal}{\textnormal{Gal}}
\newcommand{\PV}{\textnormal{PVect}}
\newcommand{\Vect}{\textnormal{Vect}}
\newcommand{\Hom}{\textnormal{Hom}}
\newcommand{\rk}{\textnormal{rank}}
\newcommand{\tr}{\textnormal{tr}}
\DeclareMathOperator*{\Char}{char}
\begin{document}

\title{Parabolic bundles in positive characteristic}

 \author{
  Manish Kumar
 }
 \address{
Statistics and Mathematics Unit\\
Indian Statistical Institute, \\
Bangalore, India-560059
  }
\email{manish@isibang.ac.in}

\author[S. Majumder]{Souradeep Majumder}
 \address{
Statistics and Mathematics Unit\\
Indian Statistical Institute, \\
Bangalore, India-560059
  }
\email{souradeep\_vs@isibang.ac.in}

%
%

\date{}

 \begin{abstract}
  Algebraic parabolic bundles on smooth projective curves over algebraically closed field of positive characteristic is defined. It is shown that the category of algebraic parabolic bundles is equivalent to the category of orbifold bundles defined in \cite{KP}. Tensor, dual, pullback and pushforward operations are also defined for parabolic bundles. 
 \end{abstract}

\maketitle

\section{Introduction}

Parabolic bundles on Riemann surfaces were introduced by Mehta and Shehadri in \cite{MS}. These are vector bundles $V$ on a smooth projective curve $X$ over complex numbers with a filtration of fibres of $V$ at a collection of finitely many points $S$ of $X$ and certain real numbers, called weights, attached to these filtrations. When the weights are rational it was shown that they correspond to $\Gamma$-bundles for a $\Gamma$-cover $Y\to X$ branched at $S$ with certain inertia groups depending on the weights attached to the filtration. A crucial fact that is needed for this correspondence is that over complex numbers the inertia groups are cyclic. One can fix a generator for a inertia group and obtain an automorphism of finite order of the fibre. The eigenvalues and eigenspaces of this automorphism give us the weights and the filtration respectively at that point. Similar correspondence has also been extended to higher dimensions (see \cite{Bi2}). 

In the situation when the base field has positive characteristic, one can of course define a parabolic bundle the same way as over $\C$. But this definition is not the correct one from our point of view. We would like to define a parabolic bundle in such way that we have a bijective correspondence with equivariant bundles for some suitable cover, as in the case over $\C$. The presence of wild ramification ensures that the inertia groups may no longer be cyclic and moreover they may not determine the local monodromy. So we can not hope to get such a correspondence just from the data of weights and filtrations. Indeed, as we demonstrate, we need the full data of the action of the inertia group.  

One can also talk about parabolic bundles on a curve $X$ purely in terms of $G$-bundles on $Y$ where $Y$ is a $G$-Galois cover of $X$. In \cite{KP} such bundles were called orbifold bundles and it was shown that the category of orbifold bundles do not depend on the choice of $G$-cover of $X$. Hence one could talk about the category of orbifold bundles on $X$. Though a description of these orbifold bundles as a vector bundle on $X$ together with some more data was lacking. The goal of this paper is to provide this description. In other words, we define the analogue of parabolic bundles in positive characteristic and show that they are in bijection with orbifold bundles as defined in \cite{KP}.

In section \ref{lemmas}, $G$-bundles on affine schemes are interpreted in terms of rings and $k$-algebras with $G$-action. This is used to show a local variant of the main theorem. In other words, given a faithful $G$-action on an affine scheme $Y$ and the isotropy subgroups $G_i$ of the connected components $Y_i$ of $Y$, giving a $G$-bundle on $Y$ is equivalent to giving $G_i$-bundles on $Y_i$ satisfying various compatibility conditions (Lemma \ref{constructbundle}). These lemmas along with formal patching (Theorem \ref{formalgluing}) gives the main result (Theorem \ref{genequival}) in section \ref{sec:parabolic}. This theorem says that the category of parabolic bundles on a geometric formal orbifold curve $(X,\PP)$ is equivalent to the category of $G$-bundles on $(Y,O)$ where $(Y,O)\to (X,P)$ is an \'etale $G$-Galois cover (see section \ref{sec:parabolic} for definitions).

As a consequence of the main theorem, in section \ref{application} the category of parabolic bundles on a smooth projective curve is shown to be equivalent to the category of orbifold bundles on $X$ (defined in \cite{KP}). Pullbacks and pushforward of parabolic bundles under finite morphisms are defined. The tensor product and duals of parabolic bundles are also defined. The definitions are such that the functors defining the equivalence between parabolic bundles and orbifold bundles commute with these four operations.

\subsection*{Acknowledgements}
It is a pleasure to thank A.J. Parameswaran and Vaibhav Vaish for some useful discussions. This work is partially funded by India-Israel project grant.   

\section{Preliminaries}

Let $k$ be an algebraically closed field of arbitrary characteristic. Let $X$ and $Y$ be smooth projective curves over $k$. A morphism $\pi : Y \to X$ is called a \textit{cover} if it is finite, surjective and generically separable. The \textit{automorphism group} of the cover, $\Aut(Y/X)$, is defined to be the group of automorphisms $\sigma$ of $Y$ satisfying $\pi \circ \sigma = \pi$. For a finite group $G$, $\pi$ is said to be a \textit{$G$-cover} (or \textit{$G$-Galois cover}) if we have an injective homomorphism $G \to \Aut(Y/X)$ such that $\OO_{Y}^{G} = \pi^*\OO_X$ (where the left hand side denotes the sheaf of $G$ invariants). As $X$ is a smooth curve, the last condition is equivalent to saying that $G$ acts simply transitively on a generic geometric fibre of $\pi : Y \to X$, so that $|\Aut(Y/X)| = deg(\pi)$. For a Galois cover $\Aut(Y/X)$ will also be denoted by $\Gal(Y/X)$.

Let $\pi : Y \to X$ be a $G$-Galois cover. Let $Q$ be a point of $Y$ and let $P = \pi(Q) \in X$. The \textit{decomposition group} at $Q$ is the set of $\sigma \in \Gal(Y/X)$ such that $\sigma(Q) = Q$. It is denoted by $D_Q$ and is a subgroup of $G$. The number of points in the fibre $\pi^{-1}(P)$ is $|G|/|D_Q|$. The \textit{inertia group} $I_Q$ at $Q$ is the subgroup of $D_Q$ that induces the identity automorphism on the residue field at $Q$. Since $k$ is algebraically closed, the inertia group equals the decomposition group. The cover is \textit{ramified} at $Q$ if $I_Q$ is non-trivial, and it is \textit{totally ramified} at $Q$ if $I_Q = G$. The \textit{branch locus} of $\pi$ is the set of points $P \in X$ for which there exists a ramified point $Q \in \pi^{-1}(P)$. The phrase
\textit{branched only} at $B$ means that the branch locus is contained in $B$. Clearly for two points $Q, Q^{\prime} \in \pi^{-1}(P)$ the groups $I_Q, I_{Q^{\prime}}$ are conjugates of each other. In fact, if $g.Q = Q^{\prime}$, then $I_Q = g^{-1}I_{Q^{\prime}}g$. It is well known (\cite{SerreLocal}) that inertia groups are of the form $H \rtimes \mu_r$, where $H$ is a $p$-group, $\mu_r$ is a cyclic group of order $r$ with $(p, r) = 1$ and $p>0$ is the characteristic of the field $k$. If $\Char(k)=0$ then the inertia groups are cyclic groups.

In the same situation as the previous paragraph, let $E$ be a vector bundle on $Y$. We say that $E$ is a \textit{$G$-bundle} if there is a $G$ action on $E$ which is compatible with the $G$ action on $Y$. More precisely let $\lambda : G \times Y \to Y$ be the $G$ action on $Y$ and $\E$ denotes the locally free sheaf corresponding to $E$, then $E$ is a $G$-bundle if there exists an isomorphism $\Lambda : pr_Y^*\E \arsim \lambda^*\E$ of sheaves on $G \times Y$ satisfying the following cocycle condition. For each $g \in G$, by restriction we have $\Lambda_{\{g\} \times Y} : \E \arsim \lambda_g^*\E$ where $ \lambda_g : Y \to Y$ is the isomorphism induced by $\lambda$. By identifying $\{g\} \times Y$ with $Y$ we treat this as isomorphism of sheaves on $Y$ and denote it by $\Lambda(g)$. The cocycle condition is $\Lambda(e) = \1_\E$ and for any $g, h \in G$, $\Lambda(hg) = \lambda_g^*(\Lambda(h)) \circ \Lambda(g) : \E \to \lambda_g^*\E \to \lambda_g^*\lambda_h^*\E$ (note that $\lambda_{hg}^*\E$ and $\lambda_g^*\lambda_h^*\E$ are canonically identified). As $G$ is finite, the knowledge of $\Lambda(g)$ is enough to reconstruct $\Lambda$. Note that, for any vector bundle $F$ on $X$, the pullback bundle $\pi^*F$ is naturally a $G$-bundle. We denote the category of $G$-bundles on $Y$ by $\Vect_G(Y)$.

\subsection*{Notations and conventions}

Rings are always assumed to be commutative with identity and ring homomorphisms take the identity to identity.
Points are always closed, unless specified otherwise. For any point $P \in X$ we use $\K_{X, P}$ to denote the field of fractions of the completion $\hat{\OO}_{X, P}$ of regular functions at $P$. Whenever we deal with some group of automorphisms, e.g. $\Aut_{\Ab}, \Aut_{\Ring}, \Aut_{\Sch}$, we assume that these morphisms are $k$-linear.

\section{Some generalities} \label{lemmas}

For later use we gather in this section a few results on $G$-bundles.

\begin{lem} \label{groupaction}
Let $Y = Spec(R)$ be an affine scheme and $G$ be a finite group acting on $Y$ via $\lambda : G  \to \Aut_{\Sch}(Y)$. Let $\E$ be a quasi-coherent sheaf on $Y$ with a compatible $G$ action i.e. we have isomorphisms $\Lambda(g) : \E \arsim \lambda(g)^*\E$ such that $\Lambda(e) = \1_{\E}$ and $\Lambda(hg) = \lambda(g)^*(\Lambda(h)) \circ \Lambda(g)$ for any $g, h \in G$. Let $E = \E(Y)$ be the $R$ module associated to the quasi-coherent sheaf $\E$. Then $\lambda$ corresponds to  a group homomorphism $\phi : G \to \Aut_{\Ring}(R)$ and $\Lambda$ corresponds to a group homommorphism $\Phi : G \to \Aut_{\Ab}(E)$ such that $\Phi(g)(r \cdot x) = \phi(g)(r) \cdot \Phi(g)(x)$ for any $g \in G, r \in R, x \in E$.

Moreover, let $\E^{\prime}$ be another quasi-coherent sheaf with a compatible $G$ action with $E^{\prime} = \E^{\prime}(Y)$ and the group action given by $\Phi^{\prime} : G \to \Aut_{\Ab}(E^{\prime})$ (as in the previos paragraph). Then a $G$-equivariant morphism of sheaves from $\E \to \E^{\prime}$ corresponds to $f : E \to E^{\prime}$, a morphism of $R$ modules such that the following diagram commutes 
\[
\xymatrix{
E \ar[r]^{f} \ar[d]_{\Phi(g)} & E^{\prime} \ar[d]^{\Phi^{\prime}(g)} \\
E \ar[r]^{f} & E^{\prime}
}
\]
for all $g \in G$. 
\end{lem}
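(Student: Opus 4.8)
The plan is to transport all the data through the equivalence of categories between quasi-coherent sheaves on the affine scheme $Y = Spec(R)$ and $R$-modules furnished by the global sections functor $\Gamma$ (with quasi-inverse $M \mapsto \widetilde{M}$), keeping careful track of how each variance behaves; the substance of the lemma is essentially this bookkeeping. First, for the scheme action: as $Y$ is affine, $\Aut_{\Sch}(Y) \cong \Aut_{\Ring}(R)^{\op}$ via $\sigma \mapsto \sigma^{\#}$, since $(\sigma \circ \tau)^{\#} = \tau^{\#} \circ \sigma^{\#}$. Hence $g \mapsto \lambda(g)^{\#}$ is an anti-homomorphism, and I set $\phi(g) := \lambda(g^{-1})^{\#} = (\lambda(g)^{\#})^{-1}$, which is then a genuine homomorphism $\phi : G \to \Aut_{\Ring}(R)$. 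This reversal is the first place where care is needed.

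Next, the sheaf data. Write $E = \E(Y)$, an $R$-module. For the ring automorphism $\lambda(g)^{\#} : R \to R$ one has $\lambda(g)^{*}\E \cong \widetilde{E \otimes_{R,\lambda(g)^{\#}} R}$, and since $\lambda(g)^{\#}$ is invertible the assignment $x \mapsto x \otimes 1$ identifies $\Gamma(\lambda(g)^{*}\E)$ with the abelian group $E$ carrying the twisted $R$-action $r \ast x = \phi(g)(r)\cdot x$. Under this identification I define $\Phi(g) := \Gamma(\Lambda(g))$ as an automorphism of the abelian group $E$; the fact that $\Gamma(\Lambda(g))$ is a map of $R$-modules into the twisted module is exactly the required semilinearity relation $\Phi(g)(r\cdot x) = \phi(g)(r)\cdot \Phi(g)(x)$.

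To see that $\Phi$ is a homomorphism, I apply $\Gamma$ to the cocycle identity $\Lambda(hg) = \lambda(g)^{*}(\Lambda(h)) \circ \Lambda(g)$. Chasing the identifications above, $\Gamma(\lambda(g)^{*}(\Lambda(h)))$ becomes $\Phi(h)$ on underlying abelian groups, its source being $E$ twisted by $\phi(g)$ and its target being $E$ twisted by $\phi(h)\circ\phi(g) = \phi(hg)$, in agreement with the canonical identification $\lambda(hg)^{*}\E \cong \lambda(g)^{*}\lambda(h)^{*}\E$. Since $\Gamma$ is covariant, the cocycle identity then yields $\Phi(hg) = \Phi(h) \circ \Phi(g)$, while $\Lambda(e) = \1_{\E}$ gives $\Phi(e) = \id$; thus $\Phi : G \to \Aut_{\Ab}(E)$ is a homomorphism.

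Finally, the statement about morphisms follows from the same identifications: a $G$-equivariant sheaf map $\E \to \E'$ is by definition one making the square with vertical arrows $\Lambda(g), \Lambda'(g)$ commute, and applying $\Gamma$ turns this into the displayed square with $\Phi(g), \Phi'(g)$ and $f = \Gamma$ of the map; conversely, fullness and faithfulness of $\Gamma$ show that every such $f$ arises from a unique equivariant sheaf map, giving the claimed correspondence. The only real obstacle throughout is maintaining consistency between the contravariance of $\Gamma$ and the twisting in the pullback, so that both $\phi$ and $\Phi$ come out as honest homomorphisms (rather than anti-homomorphisms) and the semilinearity carries precisely $\phi(g)$; once the two twists are checked to compose compatibly, everything else is formal.
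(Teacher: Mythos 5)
Your proposal is correct and follows essentially the same route as the paper: define $\phi(g)=\lambda(g^{-1})^{\#}$ to undo the contravariance, identify $\Gamma(\lambda(g)^{*}\E)$ with the $\phi(g)$-twisted module (the paper's map $\beta(g):x\otimes r\mapsto r\cdot x$ is the inverse of your $x\mapsto x\otimes 1$), read off semilinearity, and obtain the cocycle identity and the statement on morphisms by applying the fully faithful functor $\Gamma$. The only difference is presentational: the paper spells out the computation showing that $\beta(g)\circ\lambda(g)^{*}\Phi(h)\circ\beta(g)^{-1}$ is again $\Phi(h)$ viewed as a map $E_g\to(E_h)_g$, which you summarize as ``chasing the identifications.''
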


\begin{proof}
For each $g \in G$ we are given $\lambda(g) : Y \arsim Y$. By the correspondence between affine schemes and rings, we have a corresponding map $\lambda(g)^{*} : R \arsim R$. Define $\phi(g) = \lambda(g^{-1})^{*}$. Clearly $\phi(g)$ is a ring automorphism and and $\phi : G \to \Aut_{\Ring}(R)$ defined by $g \mapsto \phi(g)$ gives us the required group homomorphism.

Similarly by the correspondence between quasi-coherent sheaves over affine schemes and modules over rings we have maps $\sigma(g): E \arsim (E \otimes_{R, \lambda(g)^*} R) = (E \otimes_{R, \phi(g^{-1})} R)$ induced from $\Lambda(g)$. Note that the maps $\sigma(g)$ are $R$-module maps where the $R$-module structure on $E \otimes_{R, \phi(g^{-1})} R$ is given by $r \cdot (x \otimes s) = (x \otimes rs) = (\phi(g)(r)x \otimes s)$ for any $x \in E$ and $r,s \in R$. Consider the $R$ module $E_g$ which as an abelian group is the module $E$, but the multiplication structure is given as follows : $r \cdot x := \phi(g)(r) x$ for any $r \in R, x \in E$. We can define an $R$ linear map $\beta(g) : E \otimes_{R, \phi(g^{-1})} R \rightarrow E_g$ by $x \otimes r \mapsto r \cdot x$ . This map gives us an isomorphism of $R$ modules. Define $\Phi(g) = \beta(g) \circ \sigma(g)$ which is an automorphism of $E$ as an abelian group and satisfies the required linearity condition. It remains to check that $\Phi(hg) = \Phi(h) \circ \Phi(g)$. We have
\begin{align*}
\sigma(hg) = \lambda(g)^*(\sigma(h)) \circ \sigma(g) : E \to E \otimes_{R, \phi(g^{-1})} R \to (E \otimes_{R, \phi(h^{-1})} R) \otimes_{R, \phi(g^{-1})} R
\end{align*}
where by $\lambda(g)^*(\sigma(h))$ we mean the map $\sigma(h) \otimes_{R, \phi(g^{-1})} Id_R : E \otimes_{R, \phi(g^{-1})} R \to (E \otimes_{R, \phi(h^{-1})} R) \otimes_{R, \phi(g^{-1})} R$. Note that we have canonically identified $(E \otimes_{R, \phi(h^{-1})} R) \otimes_{R, \phi(g^{-1})} R = E \otimes_{R, \phi(g^{-1}h^{-1}}) R$. Similarly 
\begin{align*}
\beta(hg) = \beta(g) \circ \lambda(g)^*(\beta(h)) : (E \otimes_{R, \phi(h^{-1})} R) \otimes_{R, \phi(g^{-1})} R \to E_h \otimes_{R, \phi(g^{-1})} R \to (E_h)_g = E_{hg}
\end{align*}
(note that $E_{hg} = (E_h)_g$). 
Now 
\begin{align*}
\Phi(hg) &= \beta(hg) \circ \sigma(hg) \\
         &= \beta(g) \circ \lambda(g)^*(\beta(h)) \circ \lambda(g)^*(\sigma(h)) \circ \sigma(g) \\
	 &= \beta(g) \circ \lambda(g)^*\Phi(h) \circ \sigma(g) \\
         &= \beta(g) \circ \lambda(g)^*\Phi(h) \circ \beta(g)^{-1} \circ \Phi(g) \\
         &= \Phi(h) \circ \Phi(g).
\end{align*}
Note that $\beta(g) \circ \lambda(g)^*\Phi(h) \circ \beta(g)^{-1} : E_g \to E \otimes_{R, \phi(g^{-1})} R \to E_h \otimes_{R, \phi(g^{-1})} R \to (E_h)_g$ is equal to the map induced by $\Phi(h)$ from $E_g \to (E_h)_g$, which we still call $\Phi(h)$. So we are done with the first part.

For dealing with morphisms, we first note that giving a $G$ equivariant morphism from $\E \to \E^{\prime}$ means that for any $g \in G$ we have a commutative diagram
\[\xymatrixcolsep{5pc}
\xymatrix{
\E \ar[r]^{\Lambda(g)} \ar[d] & \lambda(g)^{*}\E \ar[d] \\
\E^{\prime} \ar[r]^{\Lambda^{\prime}(g)} & \lambda(g)^{*}\E^{\prime}
}
\]
Correspondingly we get the following commutative diagrams:
\[\xymatrixcolsep{6pc}
\xymatrix{
E \ar[r]^{\sigma(g)} \ar[d]^{f} & E \otimes_{R, \phi(g^{-1})} R \ar[r]^{\beta(g)} \ar[d]^{f \otimes Id} & E_g \ar[d]^{f} \\
E^{\prime} \ar[r]^{\sigma^{\prime}(g)} & E^{\prime} \otimes_{R, \phi(g^{-1})} R \ar[r]^{\beta^{\prime}(g)} & E^{\prime}_g
}
\]
Clearly the result follows. 
\end{proof}

\begin{lem} \label{constructbundle}
Let $R = \prod_{i = 1}^{l} R_i$ where $R_i$'s are rings for all $1 \leq i \leq l$. Let $Y_i = Spec(R_i), Y = Spec(R)$. $G$ be a finite group acting on $Y$ via $\lambda : G  \to \Aut_{\Sch}(Y)$ and correspondingly acting on $R$ via a group homomorphism $\phi : G \to \Aut_{\Ring}(R)$. Let $G_i := Stab_G(Y_i)$. We have induced group actions $\lambda_i : G_i  \to \Aut_{\Sch}(Y_i)$ and $\phi_i : G_i \to \Aut_{\Ring}(R_i)$. Let $\E_i$ be a  coherent sheaf on $Y_i$ with compatible $G_i$ action i.e. we are given group homomorphisms $\Phi_i : G_i \to \Aut_{\Ab}(E_i)$ such that $\Phi_{i}(g)(r \cdot x) = \phi_{i}(g)(r) \cdot \Phi_{i}(g)(x)$ for any $g \in G_i, r \in R_i, x \in E_i$ where $E_i$ is the $R_i$ module corresponding to the sheaf $\E_i$. Suppose we are given the following data :
\begin{itemize}
\item[(i)] elements $\{g_{ij}\}_{1 \leq i, j \leq l}$ where $g_{ij} \in G$ for all $i, j$ such that $\lambda(g_{ji})(Y_j) = Y_i$ inducing isomorphisms of rings $\alpha_{ij} : R_i \to R_j$;
\item[(ii)] isomorphisms of abelian groups $\theta_{ij} : E_i \to E_j$ for all $i, j$;
\end{itemize}
which satisfy the following conditions :
\begin{itemize}
\item[(A)] $g_{ik} = g_{jk}g_{ij}, g_{ii} = 1$ \ \ (equivalently $g_{ik}^{-1} = g_{ij}^{-1}g_{jk}^{-1}$);
\item[(B)] $\theta_{ik} = \theta_{jk}\theta_{ij}, \theta_{ii} = Id$ \ \ (equivalently $\theta_{ik}^{-1} = \theta_{ij}^{-1}\theta_{jk}^{-1}$);
\item[(C)] $\Phi_j(g_{ij}ag_{ij}^{-1}) = \theta_{ij} \circ \Phi_i(a) \circ \theta_{ij}^{-1}$ for any $a \in G_i$;
\item[(D)] $\theta_{ij}(r \cdot x) = \alpha_{ij}(r) \cdot \theta_{ij}(x)$ for any $r \in R_i, x \in E_i$. 
\end{itemize}
Then there exists a coherent sheaf $\E$ on $Y$ with a compatible $G$ action such that $\E|Y_i$ gives back the sheaf $\E_i$ along with the $G_i$ action. 
\end{lem}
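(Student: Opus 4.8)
The plan is to pass to the module-theoretic description furnished by Lemma \ref{groupaction} and to build the global object by hand. Since $R = \prod_{i=1}^l R_i$, every $R$-module decomposes as a finite direct sum over the factors, so I would set $E := \bigoplus_{i=1}^l E_i$ with the evident $\prod R_i$-module structure in which $R_i$ acts on the summand $E_i$ and annihilates the others. Being a finite direct sum of coherent modules, $E$ corresponds to a coherent sheaf $\E$ with $\E|_{Y_i} = \E_i$, so by Lemma \ref{groupaction} it suffices to produce a homomorphism $\Phi : G \to \Aut_{\Ab}(E)$ satisfying $\Phi(g)(r\cdot x) = \phi(g)(r)\cdot\Phi(g)(x)$ and restricting to $\Phi_i$ on each $E_i$.

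To define the action, for $g \in G$ let $\sigma_g$ be the permutation of $\{1,\dots,l\}$ determined by $\lambda(g)(Y_i) = Y_{\sigma_g(i)}$. Since $g_{i,\sigma_g(i)}$ also carries $Y_i$ to $Y_{\sigma_g(i)}$, the element $a_i^{(g)} := g_{i,\sigma_g(i)}^{-1}g$ stabilizes $Y_i$, i.e.\ lies in $G_i$, and $g = g_{i,\sigma_g(i)}\,a_i^{(g)}$. I would then define $\Phi(g)$ blockwise by
\[
\Phi(g)|_{E_i} := \theta_{i,\sigma_g(i)}\circ\Phi_i\bigl(a_i^{(g)}\bigr) : E_i \to E_i \to E_{\sigma_g(i)}.
\]
That $\Phi(e) = \1_E$ is immediate from $g_{ii} = 1$ in (A) and $\theta_{ii} = \id$ in (B), and the restriction property is equally direct: for $a \in G_i$ one has $\sigma_a(i) = i$ and $a_i^{(a)} = a$, whence $\Phi(a)|_{E_i} = \theta_{ii}\circ\Phi_i(a) = \Phi_i(a)$.

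The main work, and the only real obstacle, is the cocycle identity $\Phi(hg) = \Phi(h)\circ\Phi(g)$. Writing $j = \sigma_g(i)$ and $k = \sigma_h(j) = \sigma_{hg}(i)$, I would compute $\Phi(h)\circ\Phi(g)$ on the summand $E_i$ as $\theta_{jk}\circ\Phi_j(b)\circ\theta_{ij}\circ\Phi_i(a)$ with $a = a_i^{(g)}$ and $b = a_j^{(h)}$, then use condition (C) twice to move $\theta_{ij}$ and $\theta_{jk}$ rightward past the $\Phi$'s, each pass conjugating the group argument by the relevant $g_{ij}$ or $g_{jk}$. Collapsing the two $\theta$'s via (B) into $\theta_{ik}$ and simplifying the accumulated group element with the cocycle (A), in the form $g_{ij}^{-1}g_{jk}^{-1} = g_{ik}^{-1}$, yields exactly $\theta_{ik}\circ\Phi_i\bigl(g_{ik}^{-1}hg\bigr) = \Phi(hg)|_{E_i}$, since $g_{ik}^{-1}hg = a_i^{(hg)}$. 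Finally, semilinearity of $\Phi(g)$ follows from that of each $\Phi_i$ together with (D): for $x \in E_i$ and $r \in R$ with $i$-component $r_i$, one checks the purely ring-theoretic identity $\phi(g)|_{R_i} = \alpha_{i,\sigma_g(i)}\circ\phi_i(a_i^{(g)})$ using $g = g_{i,\sigma_g(i)}a_i^{(g)}$ together with $g_{ji} = g_{ij}^{-1}$ (a consequence of (A)), so that pushing scalars first through $\Phi_i(a_i^{(g)})$ and then through $\theta_{i,\sigma_g(i)}$ reproduces the required relation.
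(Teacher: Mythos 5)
Your proposal is correct and follows essentially the same route as the paper: define $\E$ by $\E|_{Y_i}=\E_i$ so that $E=\prod_i E_i$, set $\Phi(g)$ blockwise as $\theta_{i,\sigma_g(i)}\circ\Phi_i(g_{i,\sigma_g(i)}^{-1}g)$, and verify semilinearity via (D) and the cocycle identity via (C), (B) and (A) exactly as in the paper's computation of $(\Phi(hg)(v))_k$. The only cosmetic differences are that you index the blocks by the source rather than the target component and you spell out the restriction property $\Phi(a)|_{E_i}=\Phi_i(a)$, which the paper leaves as "clear from the construction."
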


\begin{proof}
We begin by observing that : $\alpha_{ik} = \alpha_{jk}\alpha_{ij}$ (this follows from condition (A) and the fact that $\alpha_{ij}$'s are appropriate restrictions of $\phi(g_{ij})$'s) and $\phi_j(g_{ij}ag_{ij}^{-1}) = \alpha_{ij} \circ \phi_i(a) \circ \alpha_{ij}^{-1}$ for any $a \in G_i$.
Also observe the following relation between $\phi$ and $\phi_i$'s : for any $r = (r_1, \ldots, r_l) \in R, g \in G$, say $s = (s_1, \ldots, s_l) = \phi(g)(r)$. Assume $\lambda(g)(Y_i) = Y_j$ then $g_{ij}^{-1}g \in G_i$ and we can write $g = g_{ij}g^{i}$ for some $g^{i} \in G_i$. Note that $i$ and $g^{i}$ are determined by $g$ and $j$. Then $s_j = \alpha_{ij}(\phi_i(g^i)(r_i))$.

As $Y = \coprod_{i=1}^l Y_i$, we can define a coherent sheaf $\E$ on $Y$ by simply demanding that $\E|Y_i = \E_i$. The corresponding $R$ module is $E = \prod_{i=1}^l E_i$. Now we define the $G$ action on $\E$ as follows : let $v = (v_1, \ldots, v_l) \in E, v_i \in E_i$ and $w = \Phi(g)(v)$. Given $1 \leq j \leq l$ find $i$ and $g^i \in G_i$ as above. Then $w_j := \theta_{ij}(\Phi_i(g^i)(v_i))$. First we check that $\Phi$ satisfies the necessary linearity condition as described in Lemma \ref{groupaction}. Let $w^{'} = \Phi(g)(r \cdot v), r \in R$. Then by our definition 
\begin{align*}
w^{'}_j = \theta_{ij}(\Phi_i(g^i)(r_i \cdot v_i)) = \theta_{ij}(\phi_i(g^i)(r_i) \cdot \Phi_i(g^i)(v_i)) = \alpha_{ij}(\phi_i(g^i)(r_i)) \cdot w_j
\end{align*}
(here we have used the linearity condition for $\Phi_i$ and condition (D)). On the other hand, the $j$th component of $\phi(g)(r) \cdot \Phi(g)(v)$ is nothing but $s_j \cdot w_j = \alpha_{ij}(\phi_i(g^i)(r_i)) \cdot w_j$. Hence $\Phi(g)(r \cdot v) = \phi(g)(r) \cdot \Phi(g)(v)$.

Clearly $g \mapsto \Phi(g)$ gives us a function $G \to \Aut_{\Ab}(E)$. It remains to check that this defines a group homomorphism. We need to show that $\Phi(hg)(v) = \Phi(h)(\Phi(g)(v))$ for $h, g \in G, v \in E$. Let $w = \Phi(g)(v), x = \Phi(h)(w)$. Fix an index $k$, and assume $\lambda(h)(Y_j) = Y_k, \lambda(g)(Y_i) = Y_j$. As before write $g = g_{ij}g^{i}, h = g_{jk}h^{j}$ for some $g^{i} \in G_i, h^j \in G_j$. Note that $\lambda(hg)(Y_i) = Y_k$. Then using condition (A) we write :
\begin{align*}
hg = g_{jk}h^{j}g_{ij}g^{i} = g_{jk}g_{ij}(g_{ij}^{-1}h^{j}g_{ij}g^{i}) = g_{ik}(g_{ij}^{-1}h^{j}g_{ij}g^{i})
\end{align*}
By our definition $(hg)^i = g_{ij}^{-1}h^{j}g_{ij}g^{i}$. Then $(\Phi(hg)(v))_k = \theta_{ik}(\Phi_{i}((hg)^i)(v_i))$. On the other hand 
\begin{align*}
x_k = \theta_{jk}(\Phi_{j}(h^j)(w_j)) &= \theta_{jk}\Phi_{j}(h^j)\theta_{ij}(\Phi_i(g^i)(v_i)) \\
&= \theta_{jk} \theta_{ij} \Phi_{i}(g_{ij}^{-1}h^{j}g_{ij}) \theta_{ij}^{-1} \theta_{ij}(\Phi_i(g^i)(v_i)) & \text{(by condition (C))} \\
&= \theta_{ik}\Phi_{i}(g_{ij}^{-1}h^{j}g_{ij}g^{i})(v_i) & \text{(by condition (B))} \\
&= \theta_{ik}\Phi_{i}((hg)^i)(v_i).
\end{align*}
Hence clearly the necessary equality holds and we have defined a group homomorphism $G \to \Aut_{\Ab}(E)$ given by $g \mapsto \Phi(g)$. From the construction it is clear that $\E$ is the desired sheaf.  
\end{proof}

\begin{rmk}
We can construct a set of $\{g_{ij}\}$'s as described above in the following way : first choose elements $\{g_{i i+1}\}$ such that $\lambda(g_{i i+1})(Y_i) = Y_{i+1}$ for $1 \leq i \leq l-1$. Then for any $i < j$ we define $g_{ij} = g_{j-1 j} \ldots g_{i+1 i+2} g_{i i+1}$ and $g_{ji} = g_{ij}^{-1}$. Put $g_{ii} = 1$ $\forall i$ and we have the required set of elements. 
\end{rmk}

In fact we now show that the $G$-bundle constructed as in the Lemma is independent of the choice of $\{g_{ij}\}$'s. 
\begin{lem} \label{constructbundleindep}
Suppose we are in the set up of Lemma \ref{constructbundle}. Assume we are given the following data :
\begin{itemize}
\item[(i)] elements $\{g_{ij}^{\delta}\}_{1 \leq i, j \leq l}$ where $g_{ij}^{\delta} \in G$ for all $i, j$ and $\delta = 1, 2$ such that $\lambda(g_{ji}^{\delta})(Y_j) = Y_i$ inducing isomorphisms of rings $\alpha_{ij}^{\delta} : R_i \to R_j$;
\item[(ii)] isomorphisms of abelian groups $\theta_{ij}^{\delta} : E_i \to E_j$ for all $i, j$;
\end{itemize}
which satisfy the following conditions :
\begin{itemize}
\item[(A)] $g_{ik}^{\delta} = g_{jk}^{\delta}g_{ij}^{\delta}, g^{\delta}_{ii} = 1$;
\item[(B)] $\theta_{ik}^{\delta} = \theta_{jk}^{\delta}\theta_{ij}^{\delta}, \theta^{\delta}_{ii} = Id$;
\item[(C)] $\Phi_j^{\delta}(g_{ij}^{\delta}ag^{\delta}_{ij}{}^{-1}) = \theta_{ij}^{\delta} \circ \Phi_i^{\delta}(a) \circ \theta^{\delta}_{ij}{}^{-1}$ for any $a \in G_i$;
\item[(D)] $\theta_{ij}^{\delta}(r \cdot x) = \alpha_{ij}^{\delta}(r) \cdot \theta_{ij}^{\delta}(x)$ for any $r \in R_i, x \in E_i$. 
\end{itemize}
Moreover we make the assumption that $\Phi_1^1 = \Phi_1^2$. Let $\E^{\delta}$ be the coherent $G$ sheaf on $Y$ corresponding to $\{g_{ij}^{\delta}\}, \{\theta_{ij}^{\delta}\}$ as given by \ref{constructbundle}. Then we have an isomorphism of $G$ sheaves $\E^1 \cong \E^2$.
\end{lem}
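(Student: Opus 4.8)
The plan is to construct the isomorphism by hand and to test $G$-equivariance through the module-theoretic criterion of Lemma \ref{groupaction}. By the morphism part of that lemma, a $G$-equivariant morphism $\E^1 \to \E^2$ is the same datum as an $R$-module map $f : E \to E$ of the common underlying module $E = \prod_i E_i$ (common because the $E_i$ are fixed) satisfying $f \circ \Phi^1(g) = \Phi^2(g) \circ f$ for every $g \in G$, and it is an isomorphism exactly when $f$ is bijective. Since $R = \prod_i R_i$, any such $f$ respects the idempotent decomposition, so it is block diagonal, $f = (f_i)$ with each $f_i : E_i \to E_i$ an $R_i$-module map; the task is thus to produce the $f_i$ and to verify the single relation above.

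Unwinding the formula for $\Phi^\delta$ from Lemma \ref{constructbundle}, I write $(g^i)^\delta = (g_{ij}^\delta)^{-1}g \in G_i$ whenever $\lambda(g)(Y_i) = Y_j$, and introduce the discrepancy element $h_{ij} := (g_{ij}^2)^{-1}g_{ij}^1 \in G_i$, so that $(g^i)^2 = h_{ij}(g^i)^1$. Reading off the $j$-th component of $f \circ \Phi^1(g) = \Phi^2(g) \circ f$ on the $i$-th summand and letting $a = (g^i)^1$ range over $G_i$, the equivariance relation splits into
\[
f_j \circ \theta_{ij}^1 = \theta_{ij}^2 \circ \Phi_i^2(h_{ij}) \circ f_i \quad (1)
\qquad\text{and}\qquad
f_i \circ \Phi_i^1(a) = \Phi_i^2(a) \circ f_i \quad (2),
\]
the first for all $i,j$ and the second for all $i$ and all $a \in G_i$. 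I would then anchor the construction by setting $f_1 = \id_{E_1}$, which is legitimate precisely because $\Phi_1^1 = \Phi_1^2$ makes $(2)$ hold trivially at $i = 1$, and define $f_j := \theta_{1j}^2 \circ \Phi_1^2(h_{1j}) \circ (\theta_{1j}^1)^{-1}$, i.e.\ impose $(1)$ for the pairs $(1,j)$. It then remains to verify three things: that each $f_j$ is $R_j$-linear, that $(2)$ holds at every index, and that $(1)$ holds for all pairs $(i,j)$ and not merely for $i = 1$.

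The main obstacle is that the two gluing systems use different elements $g_{ij}^\delta$, hence different ring isomorphisms $\alpha_{ij}^\delta$ and different conjugation rules, so a priori $f_j$ is only semilinear. The decisive computation is the identity $\alpha_{1j}^1 = \alpha_{1j}^2 \circ \phi_1(h_{1j})$, which I obtain by expanding $\phi(g_{1j}^1)$ in the two ways $g_{1j}^1 = g_{1j}^1\cdot e$ and $g_{1j}^1 = g_{1j}^2\, h_{1j}$ through the component formula $s_j = \alpha_{ij}(\phi_i(g^i)(r_i))$ of Lemma \ref{constructbundle}; this is exactly what forces $f_j(r\cdot x) = r\cdot f_j(x)$. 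For the two remaining relations one uses condition (C) (for both $\delta$) to slide the maps $\theta^\delta$ past the $\Phi^\delta$, condition (B) to collapse the chains $\theta_{ij}^\delta\theta_{1i}^\delta = \theta_{1j}^\delta$, and the hypothesis $\Phi_1^1 = \Phi_1^2$; after these substitutions each relation reduces to an identity of group elements in $G_1$: for $(2)$ at index $j$ this is $h_{1j}\,b = c\,h_{1j}$, where $b = (g_{1j}^1)^{-1}a\,g_{1j}^1$ and $c = (g_{1j}^2)^{-1}a\,g_{1j}^2$ express $a \in G_j$ as a $G_1$-conjugate, while for $(1)$ it is $(g_{1i}^2)^{-1}h_{ij}g_{1i}^2\cdot h_{1i} = h_{1j}$; each is a one-line cancellation using condition (A) in the form $g_{1j}^\delta = g_{ij}^\delta g_{1i}^\delta$. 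Since every $f_i$ is visibly bijective, $f = (f_i)$ is the sought isomorphism $\E^1 \cong \E^2$ of $G$-sheaves.
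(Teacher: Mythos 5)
Your proposal is correct and follows essentially the same route as the paper: your component maps $f_j = \theta_{1j}^2 \circ \Phi_1^2(h_{1j}) \circ (\theta_{1j}^1)^{-1}$ coincide with the paper's $\tau_j = \theta_{1j}^2 \circ \Psi(f_j^{-1}) \circ (\theta_{1j}^1)^{-1}$ (your $h_{1j}$ is the inverse of the paper's group element $f_j$), and both arguments reduce the equivariance to the same group-element identities via conditions (B), (C) and $\Phi_1^1 = \Phi_1^2$. Your explicit verification of $R_j$-linearity of the components, which the paper leaves implicit, is a welcome extra check but does not change the method.
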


\begin{proof}
In view of Lemma \ref{constructbundle}, it is enough to show that there exist a $G$-equivariant isomorphism $\tau:E\to E$ where $E=\E^1(Y)=\E^2(Y)$, and $E$ is equipped with two $G$-actions coming from the given two sets of data.

We have elements $h_i \in G_i$ such that $g_{i i+1}^2 = g_{i i+1}^1h_i$ for $1 \leq i \leq l-1$. Put $h_j^{\prime} = g_{1j}^{1}{}^{-1}h_jg_{1j}^{1}$, $h_j^{\prime} \in G_1$. Now a simple computation tells us that for any $i < j$
\begin{align*}
g_{ij}^2 = g_{1j}^1 h_{j-1}^{\prime} h_{j-2}^{\prime} \ldots h_i^{\prime} g_{1i}^{1}{}^{-1}.
\end{align*}
In particular
$g_{1j}^2 = g_{1j}^1 h_{j-1}^{\prime} h_{j-2}^{\prime} \ldots h_1^{\prime} = g_{1j}^1 f_j \ \ (\text{say}).$

Put $\Psi = \Phi_1^1 = \Phi_1^2$. Now for $b \in G_j$ 
\begin{align*}
\Phi^{2}_j(b) &= \theta^{2}_{1j} \circ \Psi(g^{2}_{1j}{}^{-1}bg^{2}_{1j}) \circ \theta^{2}_{1j}{}^{-1} \\
              &= \theta^{2}_{1j} \circ \Psi(f_j^{-1} g^{1}_{1j}{}^{-1}bg^{1}_{1j} f_j) \circ \theta^{2}_{1j}{}^{-1} \\
              &= \theta^{2}_{1j} \circ \Psi(f_j^{-1}) \circ \Psi(g^{1}_{1j}{}^{-1}bg^{1}_{1j}) \circ \Psi(f_j) \circ \theta^{2}_{1j}{}^{-1} \\
              &= (\theta^{2}_{1j} \circ \Psi(f_j^{-1}) \circ \theta^{1}_{1j}{}^{-1}) \circ \Phi^{1}_{j}(b) \circ (\theta^{1}_{1j} \circ \Psi(f_j) \circ \theta^{2}_{1j}{}^{-1}). 
\end{align*}
Setting $\tau_j = \theta^{2}_{1j} \circ \Psi(f_j^{-1}) \circ \theta^{1}_{1j}{}^{-1}:E_j\to E_j$ we obtain $\Phi^2_j(b)=\tau_j\circ\Phi^1_j(b)\circ\tau_j^{-1}$. Hence $\tau_j$ is also an isomorphism of $G_j$-modules where the $G_j$-action on the source and the target are given by $\Phi_j^1$ and $\Phi_j^2$ respectively.

Define $\tau = \prod_{i = 1}^l \tau_j : E \arsim E$. We need to check that $\Phi^{2}(g) \circ \tau = \tau \circ \Phi^{1}(g)$. For this we check the equality of $j$th component of both sides. Assume $\lambda(g)(Y_i) = Y_j$. Hence we can write $g = g^{1}_{ij}g^{i1} = g^{2}_{ij}g^{i2}$ for $g^{i\delta} \in G_i$. Now a simple computation shows us that $g^{i2}{g^{i1}}^{-1} = g^{1}_{1i} f_i f_j^{-1} {g^{1}_{1i}}^{-1}$. To complete the proof we need to check that the following diagram commutes 
\[\xymatrixrowsep{3pc}
\xymatrixcolsep{3pc}
\xymatrix{
E_i \ar[r]^{\tau_i} \ar[d]_{\theta^{1}_{ij} \circ \Phi^{1}_{i}(g^{i1})} & E_i \ar[d]^{\theta^{2}_{ij} \circ \Phi^{2}_{i}(g^{i2})} \\
E_j \ar[r]^{\tau_j} & E_j
}
\]

Equivalently $ \theta^{2}_{ij} \circ \tau_i \circ \Phi^{1}_{i}(g^{i2}) =  \tau_j \circ \theta^{1}_{ij} \circ \Phi^{1}_{i}(g^{i1})$ (by definition of $\tau_i$).

Since $\Phi^{1}_i$ is a group action and all the maps are isomorphisms it is enough to show 
$$\Phi^{1}_{i}(g^{i2}{g^{i1}}^{-1}) = \tau_i^{-1}{\theta^2_{ij}}^{-1}\tau_j\theta^1_{ij}$$
The LHS$=\Phi^{1}_{i}(g^{1}_{1i} f_i f_j^{-1} {g^{1}_{1i}}^{-1})=\theta^1_{1i}\Phi^1_1(f_if_j^{-1}){\theta^1_{1i}}^{-1}$ by condition (C). Also note that the condition (B) implies $\theta^{\delta}_{ij}=\theta^{\delta}_{1j}\theta^{\delta}_{i1}$. Hence RHS simplifies to
\begin{align*}
 \text{RHS} &= \tau_i^{-1}{\theta^2_{ij}}^{-1}\tau_j\theta^1_{ij}\\
            &= \tau_i^{-1}{\theta^2_{i1}}^{-1}{\theta^2_{1j}}^{-1}\tau_j\theta^1_{1j}\theta^1_{i1}\\
            &= \tau_i^{-1}\theta^2_{1i}\Psi(f_j^{-1})\theta^1_{i1}\\
            &= \theta_{1i}^1\Psi(f_i)\Psi(f_j^{-1}){\theta^1_{1i}}^{-1}\\
            &= \text{LHS \ \ (as $\Psi=\Phi^1_1$)}
\end{align*}

\end{proof}

\begin{lem} \label{morph}
Suppose we are in the set up of Lemma \ref{constructbundle}. In addition we are also given the data of quasi-coherent $G_i$ sheaves $\E^{\prime}_i$ over $Y_i$ with isomorphisms $\theta^{\prime}_{ij} : E^{\prime}_i \to E^{\prime}_j$ which satisfy the compatibility conditions as in Lemma \ref{constructbundle} with respect to the same $\{g_{ij}\}_{1 \leq i,j \leq l}$. Let $f_i : E_i \to E^{\prime}_i$ be $R_i$ module homomorphisms such that $\theta^{\prime}_{ij} \circ f_i = f_j \circ \theta_{ij}$  and $f_i \circ \Phi_i(g) = \Phi^{\prime}_i(g) \circ f_i$ for all $1 \leq i,j \leq l, g \in G_i$. Then there exists a $G$ equivariant morphism of quasi-coherent $G$ sheaves $f : \E \to \E^{\prime}$ such that $f|Y_i = f_i$ for all $1 \leq i \leq l$.
\end{lem}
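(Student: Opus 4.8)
The plan is to exploit the fact that $Y = \coprod_{i=1}^l Y_i$ is a disjoint union, so that a morphism of coherent sheaves on $Y$ is nothing but a tuple of morphisms on the components; equivalently, at the level of modules $E = \prod_i E_i$ and $E^{\prime} = \prod_i E^{\prime}_i$, and an $R$-module map $E \to E^{\prime}$ is the same as a collection of $R_i$-module maps $E_i \to E^{\prime}_i$. So I would simply define $f = \prod_{i=1}^l f_i : E \to E^{\prime}$. Since each $f_i$ is $R_i$-linear, $f$ is automatically $R$-linear, and by construction $f|Y_i = f_i$. It remains only to verify that $f$ is $G$-equivariant, i.e.\ that $f \circ \Phi(g) = \Phi^{\prime}(g) \circ f$ for every $g \in G$, where $\Phi$ and $\Phi^{\prime}$ are the $G$-actions produced by Lemma \ref{constructbundle} from the data $(\{g_{ij}\}, \{\theta_{ij}\})$ and $(\{g_{ij}\}, \{\theta^{\prime}_{ij}\})$ respectively.

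The key point is that both $\Phi$ and $\Phi^{\prime}$ are built from the \emph{same} system of group elements $\{g_{ij}\}$. To check equivariance I would fix $g \in G$ and an index $j$, and verify the equality of the $j$th components of $f(\Phi(g)(v))$ and $\Phi^{\prime}(g)(f(v))$ for $v = (v_1, \ldots, v_l) \in E$. By the recipe in Lemma \ref{constructbundle}, there is a unique $i$ with $\lambda(g)(Y_i) = Y_j$ and a unique $g^i = g_{ij}^{-1}g \in G_i$ with $g = g_{ij}g^i$; crucially these depend only on $g$ and $j$, not on whether we are acting on $E$ or $E^{\prime}$. Hence $(\Phi(g)(v))_j = \theta_{ij}(\Phi_i(g^i)(v_i))$ while $(\Phi^{\prime}(g)(f(v)))_j = \theta^{\prime}_{ij}(\Phi^{\prime}_i(g^i)(f_i(v_i)))$.

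With this in hand the verification reduces to a short chain of identities. Applying $f_j$ to the first expression gives $f_j(\theta_{ij}(\Phi_i(g^i)(v_i)))$; using the hypothesis $f_j \circ \theta_{ij} = \theta^{\prime}_{ij} \circ f_i$ this equals $\theta^{\prime}_{ij}(f_i(\Phi_i(g^i)(v_i)))$, and then using the $G_i$-equivariance hypothesis $f_i \circ \Phi_i(g^i) = \Phi^{\prime}_i(g^i) \circ f_i$ (valid since $g^i \in G_i$) this becomes $\theta^{\prime}_{ij}(\Phi^{\prime}_i(g^i)(f_i(v_i)))$, which is precisely the $j$th component of $\Phi^{\prime}(g)(f(v))$. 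Since $j$ and $v$ were arbitrary, $f \circ \Phi(g) = \Phi^{\prime}(g) \circ f$ for all $g$, so $f$ is the desired $G$-equivariant morphism.

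I do not anticipate a genuine obstacle: the statement is the natural morphism companion to Lemma \ref{constructbundle}, and the proof is a bookkeeping exercise in the two hypotheses on the $f_i$. The only point requiring care is to keep the index conventions consistent with the construction in Lemma \ref{constructbundle} — in particular to note that the decomposition $g = g_{ij}g^i$ and the resulting $i, g^i$ are dictated by $g$ and $j$ alone — so that the $\theta$-versus-$\theta^{\prime}$ and $\Phi$-versus-$\Phi^{\prime}$ bookkeeping lines up correctly.
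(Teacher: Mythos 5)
Your proposal is correct and is essentially identical to the paper's own proof: both define $f$ componentwise as $\prod_i f_i$ and verify $G$-equivariance on the $j$th component via the same chain $f_j\theta_{ij}\Phi_i(g^i)(v_i) = \theta'_{ij}f_i\Phi_i(g^i)(v_i) = \theta'_{ij}\Phi'_i(g^i)f_i(v_i)$, using the two hypotheses in the same order. No gaps.
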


\begin{proof}
By the proof of Lemma \ref{constructbundle} we know that $\E|Y_i = \E_i$ and $\E^{\prime}|Y_i = \E^{\prime}_i$. Hence obviously we have a morphism of quasi-coherent sheaves $f : \E \to \E^{\prime}$ such that $f|Y_i = f_i$. We just need to verify that $f$ is $G$ equvariant. More concretely, for $ v = (v_1, \ldots, v_l) \in \prod_{i=1}^l E_i, g \in G$ we show that $f(\Phi(g)(v)) = \Phi^{\prime}(g)(f(v))$. For some $j \in \{1, \ldots, l\}$ we compare the $j$-th component of both sides. As before we find $i$ and $g^{i} \in G_i$. Then by our construction $ (LHS)_j = f_j\theta_{ij}\Phi_i(g^{i})(v_i) = \theta^{\prime}_{ij}f_i\Phi_i(g^{i})(v_i) = \theta^{\prime}_{ij}\Phi^{\prime}_i(g^{i})f_i(v_i) = (RHS)_j$. Hence we are done. 
\end{proof}

We recall the following formal gluing result from \cite{Ha}, Corollary 3.1.9
\begin{theorem}
Let $V$ be a Noetherian scheme, and let $W$ be a finite set of closed points in $V$. Let $R^*$ be the ring of holomorphic functions along $W$ in $V$, let $W^* = Spec(R^*)$, let $V^0 = V - W$, and let $W^0 = W^* \times_{V} V^0$. Then the base change functor $\mathfrak{M}(V) \to \mathfrak{M}(W^*) \times_{\mathfrak{M}(W^0)} \mathfrak{M}(V^0)$ is an equivalence of categories.  
\end{theorem}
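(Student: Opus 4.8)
This is a formal patching statement of Beauville--Laszlo type, and the plan is to reduce to the affine local situation and then verify full faithfulness and essential surjectivity of the base change functor separately. Since $W$ is a finite set of closed points of the Noetherian scheme $V$, the ring $R^*$ of holomorphic functions along $W$ is the completion of $\OO_V$ along $W$, the completion map $W^* \to V$ is flat, and $V^0 \hookrightarrow V$ is an open immersion; together they form a jointly surjective flat cover of $V$. First I would reduce to $V = Spec(A)$ affine with $W = V(I)$ for an ideal $I$, so that $R^* = \hat{A}$ is the $I$-adic completion, $V^0 = Spec(A) \setminus V(I)$, and $W^0 = W^* \times_V V^0$ is the punctured formal neighbourhood. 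Objects of the target category are triples $(\mathcal{N}^*, \mathcal{N}^0, \varphi)$ with $\mathcal{N}^* \in \M(W^*)$, $\mathcal{N}^0 \in \M(V^0)$, and $\varphi$ an isomorphism between their restrictions to $W^0$; the functor sends a coherent $\F$ to $(\F|_{W^*}, \F|_{V^0}, \mathrm{can})$.

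For full faithfulness, given coherent sheaves $\F, \G$ on $V$ I would establish exactness of the equalizer (Mayer--Vietoris) sequence
\[
0 \to \Hom_V(\F,\G) \to \Hom_{W^*}(\F^*,\G^*) \oplus \Hom_{V^0}(\F^0,\G^0) \to \Hom_{W^0}(\F^{00},\G^{00}),
\]
where the superscripts denote the indicated base changes. This reduces to exactness of the patching sequence
\[
0 \to M \to (M \otimes_A \hat{A}) \oplus \Gamma(V^0, \widetilde{M}) \to \Gamma(W^0, \widetilde{M})
\]
for a finite $A$-module $M$, which follows from flatness of $\hat{A}$ over $A$, the joint surjectivity of the cover, and the comparison $M \otimes_A \hat{A} \cong \hat{M}$ valid for finite modules over the Noetherian ring $A$.

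For essential surjectivity I would glue a triple $(\mathcal{N}^*, \mathcal{N}^0, \varphi)$ by forming the fibre product sheaf $\F := \mathcal{N}^* \times_{\mathcal{N}^{00}} \mathcal{N}^0$, realized as the kernel of the difference map $\mathcal{N}^* \oplus \mathcal{N}^0 \to \mathcal{N}^{00}$ built from $\varphi$, and then check that $\F$ is coherent on $V$ and that base change recovers $\F|_{W^*} \cong \mathcal{N}^*$ and $\F|_{V^0} \cong \mathcal{N}^0$. I expect this last verification to be the main obstacle: showing that completing the glued module returns $\mathcal{N}^*$ is exactly where Noetherianity and the completeness of $\hat{A}$ do real work, via flatness of completion and the Artin--Rees lemma to control $I$-adic behaviour, together with a faithfully flat descent argument to deduce coherence of $\F$ from that of its two covers. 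A naive appeal to fppf descent does not suffice directly, since the self-intersection $W^* \times_V W^*$ is not $W^*$, so the descent datum over it must be shown to be forced by completeness; this is the conceptual heart of the Beauville--Laszlo argument. Once exactness of base change on $\F$ and the two identifications are in hand, the unit and counit of the restriction/gluing adjunction are isomorphisms, yielding the equivalence.
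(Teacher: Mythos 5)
The paper does not actually prove this statement: it is quoted verbatim from Harbater's article (cited as [Ha], Corollary 3.1.9) and used as a black box, so there is no internal proof to compare yours against. Judged on its own terms, your sketch follows the standard Ferrand--Raynaud/Beauville--Laszlo line, and the architecture --- reduce to the affine case, prove full faithfulness via a left-exact Mayer--Vietoris sequence, prove essential surjectivity by gluing along the fibre product --- is the right one. Your observation that naive fppf descent does not apply because $W^* \times_V W^* \neq W^*$ is correct and is indeed the conceptual point.

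Two things keep this at the level of an outline rather than a proof. First, the reduction to the affine case needs justification: for a general Noetherian scheme a finite set of closed points need not lie in a common affine open. The fix is to induct on $|W|$: a single closed point $w$ does lie in an affine open $U$, and Zariski gluing identifies $\M(V)$ with $\M(U) \times_{\M(U - \{w\})} \M(V - \{w\})$, which reduces the statement to the affine one. Second, and more seriously, the two verifications that carry all the content of the theorem are asserted with a list of ingredients rather than argued: exactness of $0 \to M \to \hat M \oplus \Gamma(V^0, \widetilde M) \to \Gamma(W^0, \widetilde M)$ at the middle term, and the fact that completing the glued module returns $\mathcal{N}^*$. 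Flatness of $\hat A$ and $M \otimes_A \hat A \cong \hat M$ alone do not yield middle exactness; the clean route is the local cohomology sequence $0 \to H^0_W(M) \to M \to \Gamma(V^0, \widetilde M) \to H^1_W(M) \to 0$ together with the fact that the $I$-power-torsion modules $H^i_W(M)$ are unchanged under completion, and a kindred argument is needed for the base-change identity on the glued sheaf. Since the paper itself imports the result from the literature, closing your sketch by citing Harbater or Beauville--Laszlo for precisely these two steps would be legitimate, but as written they are genuine gaps.
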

Here $\mathfrak{M}(V)$ denotes the category of coherent sheaves on $V$ and $W^*$ is nothing but the completion of $V$ along $W$.

We need the following variant of this result:
\begin{theorem} \label{formalgluing}
Let $G$ be a finite group. Let $V$ be a noetherian scheme with a $G$ action and $S$ a finite set of closed points of $V$ invariant under $G$ and on which $G$ acts transitively. Let $V^0 = V-S$, $\hat{S}$ the completion of $V$ along $S$ and $S^0 = \hat{S} \times_V V^0$. Then the base change functor is an equivalence of categories between $\mathcal{G}(V)$ and $\mathcal{G}(V^0) \times_{\mathcal{G}(S^0)} \mathcal{G}(\hat{S})$ where $\mathcal{G}(V)$ is the category of coherent $G$-sheaves of $\OO_V$ modules.  
\end{theorem}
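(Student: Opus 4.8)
The plan is to deduce this from the non-equivariant formal gluing theorem stated just above, by treating a $G$-sheaf as an ordinary coherent sheaf carrying extra equivariance data and transporting that data along the equivalence. First I would record the $G$-action on all the relevant categories. Since $S$ is $G$-invariant, each $g \in G$ induces via $\lambda(g)$ automorphisms of $V$, of $V^0 = V - S$, of the completion $\hat S$, and of $S^0 = \hat S \times_V V^0$, all compatible with the open immersion $V^0 \hookrightarrow V$, the completion morphism $\hat S \to V$, and the two projections out of $S^0$. Consequently $\lambda(g)^*$ furnishes autoequivalences of each of $\M(V)$, $\M(V^0)$, $\M(\hat S)$, $\M(S^0)$, and because pullback commutes with open immersions, fibered products and completion up to canonical natural isomorphism, the base change functors of the previous theorem intertwine these autoequivalences. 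In other words, the base change functor $F \colon \M(V) \to \M(\hat S) \times_{\M(S^0)} \M(V^0)$ is $G$-equivariant.

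Next, observe that with the notion of $G$-sheaf from the preliminaries, $\G(-)$ is exactly the category of coherent sheaves equipped with a compatible $G$-action: an object is a coherent sheaf $\E$ together with isomorphisms $\Lambda(g) \colon \E \arsim \lambda(g)^*\E$ satisfying $\Lambda(e) = \1$ and the cocycle condition $\Lambda(hg) = \lambda(g)^*(\Lambda(h)) \circ \Lambda(g)$. Moreover, this equivariance data distributes over the fiber product of categories: to give a coherent $G$-sheaf on the category $\M(\hat S) \times_{\M(S^0)} \M(V^0)$ is the same as to give $G$-sheaves on $\hat S$ and on $V^0$ together with a $G$-equivariant isomorphism of their restrictions to $S^0$, i.e.\ an object of $\G(\hat S) \times_{\G(S^0)} \G(V^0)$. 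Hence the category of $G$-equivariant objects of $\M(\hat S) \times_{\M(S^0)} \M(V^0)$ is canonically identified with $\G(\hat S) \times_{\G(S^0)} \G(V^0)$.

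Finally I would invoke the general principle that a $G$-equivariant equivalence of categories induces an equivalence on the corresponding categories of $G$-equivariant objects: applying the passage to $G$-equivariant objects to the equivalence $F$ yields an equivalence from $\G(V)$ to the category of $G$-equivariant objects of $\M(\hat S) \times_{\M(S^0)} \M(V^0)$, which by the identification above is precisely the asserted equivalence $\G(V) \arsim \G(\hat S) \times_{\G(S^0)} \G(V^0)$.

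The main point to verify carefully is that $F$ is genuinely $G$-equivariant and that it therefore descends to the equivariant level. Concretely, given an object of the target together with its equivariance data, essential surjectivity of $F$ produces the underlying coherent sheaf $\E$ on $V$; full faithfulness of $F$ transports the isomorphisms $\Lambda(g)$ glued from the two patches to isomorphisms $\E \arsim \lambda(g)^*\E$; and faithfulness of $F$ forces the cocycle identity $\Lambda(hg) = \lambda(g)^*(\Lambda(h)) \circ \Lambda(g)$ on $V$ from the corresponding cocycle identities on $\hat S$ and $V^0$. Thus all the real content is carried by the non-equivariant theorem together with the compatibility of pullback with the gluing data; note that the transitivity of $G$ on $S$ plays no role in this argument and merely records the situation, a single orbit, in which the result will be applied.
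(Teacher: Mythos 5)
Your proposal is correct and follows essentially the same route as the paper: the paper likewise deduces the equivariant statement from the non-equivariant gluing theorem by checking faithfulness via the faithful forgetful functors, fullness by gluing a morphism non-equivariantly and then using faithfulness on $V^0$ and $\hat S$ to force $G$-equivariance, and essential surjectivity by gluing the underlying sheaf and then transporting the isomorphisms $\Lambda(g)$ (including the identification of $\lambda(g)^*\E$ with the pulled-back triple) and verifying the cocycle condition by restriction. Your categorical packaging via ``$G$-equivariant objects'' and the observation that transitivity of $G$ on $S$ is not needed are accurate but do not change the substance of the argument.
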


\begin{proof}
Observe that the natural inclusion of categories $\G(V) \to \M(V)$ is faithful and the same is true for the other schemes involved. From this it easily follow that the natural morphism $\G(V^0) \times_{\G(S^0)} \G(\hat{S}) \to \M(V^0) \times_{\M(S^0)} \M(\hat{S})$ is faithful. We have a commutative diagram of categories and functors:
\[\xymatrixcolsep{5pc}
\xymatrix{
\G(V) \ar[r] \ar[d] & \G(V^0) \times_{\G(S^0)} \G(\hat{S}) \ar[d] \\
\M(V) \ar[r] & \M(V^0) \times_{\M(S^0)} \M(\hat{S})
}
\]
where the two vertical and the bottom functors are faithful, hence the top one must also be so. To show that this functor is full, assume we are given $\E, \E^{\prime} \in \G(V)$ and a morphism $(f_0, f_1) : (\E|V^{0}, \E|\hat{S}, can) \to (\E^{\prime}|V^{0}, \E^{\prime}|\hat{S}, can)$ in $\G(V^0) \times_{\G(S^0)} \G(\hat{S})$. We may consider $(f_0, f_1)$ as a morphism in $\M(V^0) \times_{\M(S^0)} \M(\hat{S})$. Hence by the above Theorem we have a morphism $f : \E \to \E^{\prime}$ in $\M(V)$ such that $f|V^0 = f_0, f|\hat{S} = f_1$. Let us denote the $G$ actions on $V, V^0, \hat{S}$ and $S^0$ by $\lambda, \lambda_0, \lambda_1$ and $\lambda^0$ respectively. By our choice $f_0$ and $f_1$ are $G$-equivariant i.e. for all $g \in G$ 
\begin{align*}
\lambda_0(g) \circ f_0 = f_0 \circ \lambda_0(g), \textnormal{and} \ \ \lambda_1(g) \circ f_1 = f_1 \circ \lambda_1(g).
\end{align*}
By making the identification $\lambda_0(g)|S^{0} = \lambda_1(g)|S^{0} = \lambda^0(g)$ we observe that the above relationships become the same when restricted to $S^{0}$. Hence again because of the previous Theorem we must have $\lambda(g) \circ f = f \circ \lambda(g)$ i.e. $f$ is $G$-equivarint. So the functor under consideration is full. 

It remains to check that this functor is also essentially surjective. Let $(\E_0, \E_1, \theta)$ be an object of $\G(V^0) \times_{\G(S^0)} \G(\hat{S})$ where $\E_0 \in \G(V^0), \E_1 \in \G(\hat{S})$ and, $\theta : \E_0|S^0 \arsim \E_1|S^0$ is an isomorphism in $\G(S^0)$. Consider $(\E_0, \E_1, \theta)$ as  an element of $\M(V^0) \times_{\M(S^0)} \M(\hat{S})$. Hence we have $\E \in \M(V)$ and isomorphisms $\E|V^0 \xrightarrow{\phi} \E_0$ in $\M(V^0)$ and $\E|\hat{S} \xrightarrow{\psi} \E_1$ in $\M(\hat{S})$ such that $\psi|S^0 = \theta \circ \phi|S^0$. For each $g \in G$, let $\Lambda_0(g) : \E_0 \to \lambda_0(g)^*\E_0$ and $\Lambda_1(g) : \E_1 \to \lambda_1(g)^*\E_1$ be the isomorphisms induced by the respective $G$ actions. As $\theta$ is an isomorphism in $\G(S^0)$ the following diagram commutes
\[\xymatrixcolsep{7pc}
\xymatrix{
\E_0|S^0 \ar[r]^{\Lambda_0(g)|S^0} \ar[d]^{\theta} & \lambda_0(g)^*\E_0|S^0 \ar[d]^{\lambda^0(g)^*\theta} \\
\E_1|S^0 \ar[r]^{\Lambda_1(g)|S^0} & \lambda_1(g)^*\E_1|S^0
}
\]
which is equivalent to saying that we have an isomorphism $(\Lambda_0(g), \Lambda_1(g)) : (\E_0, \E_1, \theta) \to (\lambda_0(g)^*\E_0, \lambda_1(g)^*\E_1, \lambda^0(g)^*\theta)$ in $\M(V^0) \times_{\M(S^0)} \M(\hat{S})$. Hence by the previous Theorem we get an isomorphism $\Lambda(g) : \E \to \lambda(g)^*\E$ (it is easy to check that $\lambda(g)^*\E$ corresponds to the triple $(\lambda_0(g)^*\E_0, \lambda_1(g)^*\E_1, \lambda^0(g)^*\theta)$). Note that we are identifying $\lambda_0(g)|S^0 = \lambda^0(g) = \lambda_1(g)|S^0$. To say that $\Lambda(g)$'s define a $G$ action on $\E$ we need to check that $\Lambda(hg) = \lambda(g)^*\Lambda(h) \circ \Lambda(g)$ for any $g, h \in G$. But we know that restricted to $\M(V^0)$ and $\M(\hat{S})$ this identity holds true (we are making the identifications $\Lambda(g)|V^0 = \Lambda_0(g)$ and $\Lambda(g)|\hat{S} = \Lambda_1(g)$). Again an application of the previous theorem tells us that the required identity holds true.

So we have constructed $\E \in \G(V)$ and from our construction it is clear that its image in $\G(V^0) \times_{\G(S^0)} \G(\hat{S})$ is isomorphic to $(\E_0, \E_1, \theta)$. Hence we are done.
\end{proof}

\section{Parabolic bundles}\label{sec:parabolic}

Let $X$ be a smooth curve over $k$. We recall the following definitions from \cite{KP}.
\begin{defn}
A quasi-branch data  on $X$ is a function $\PP$ which sends a point $x$ of $X$ to a finite Galois extension $\PP(x)$ of $\K_{X, x}$ in some fixed algebraic closure of $\K_{X, x}$. Let $\PP$ and $\PP^{\prime}$ be two quasi-branch data on $X$, we say $\PP \leq \PP^{\prime}$ if $\PP(x) \subseteq \PP^{\prime}(x)$ for all closed points $x \in X$.

The support of $\PP$, $\Supp(\PP)$ is defined to be the set of all $x \in X$ such that $\PP(x)$ is a non trivial extension of $\K_{X, x}$. A quasi-branch data $\PP$ is said to be a branch data if $\Supp(\PP)$ is a finite set. The branch data on $X$ with empty support is denoted by $O$ and is called the trivial branch data.

A smooth projective curve with a branch data is called a formal orbifold curve. 
\end{defn}

\begin{rmk}
We do not make the assumption that the underlying curve of a formal orbifold curve is always connected. 
\end{rmk}

\begin{defn} \label{parabolicdef}
Let $p$ be a point in $X$ and $\V$ be a vector bundle on $X$. A \textit{parabolic structure} on $\V$ supported on $\{p\}$ is defined by the following data: 
\begin{enumerate}
\item[(i)] a finite Galois extension $\K/\K_{X,p}$ with Galois group $I$;
\item[(ii)] a group homomorphism $\Psi : I \to \Aut_{\Ab}(\V_p \otimes_{\OO_{X, p}} R)$ where $R$ is the integral closure of $\hat{\OO}_{X, p}$ in $\K$;
\end{enumerate}
which satisfy the following conditions:
\begin{enumerate}
\item[(a)] $\Psi(g)(r \cdot x) = \psi(g)(r) \cdot \Psi(g)(x)$ for any $g \in G, r \in R$ and $x \in \V_p \otimes_{\OO_{X, p}} R$ where $\psi : I \to \Aut_{\Ring}(R)$ is the natural action;
\item[(b)] for the induced actions $\Psi^{0} = I \xrightarrow{\Psi} \Aut_{\Ab}(\V_p \otimes_{\OO_{X, p}} R) \to \Aut_{\Ab}(\V_p \otimes_{\OO_{X, p}} \K)$ and $\psi^{0} = I \xrightarrow{\psi} \Aut_{\Ring}(R) \to \Aut_{\Ring}(\K)$ we have an $I$ equivariant isomorphism $\V_p \otimes_{\OO_{X, p}} \K \xrightarrow{\mu} \V_p \otimes_{\OO_{X, p}} \K$ where the action on the left is given by $\text{Id}_{\V_p} \otimes \psi^{0}$ and the action on the right is given by $\Psi^{0}$. 
\end{enumerate}
\end{defn}

This definition easily generalizes to the situation with multiple points on $X$.
\begin{defn} 
Let $S = \{p_1, \ldots, p_N \}$ be a set of finitely many points in $X$ and $\V$ be a vector bundle on $X$. A \textit{parabolic structure} on $\V$ supported on $S$ is defined by the following data: 
\begin{enumerate}
\item[(i)] finite Galois extensions $\K_p/\K_{X, p}$ with Galois group $I_p$ for every $p \in S$;
\item[(ii)] group homomorphisms $\Psi_p : I_p \to \Aut_{\Ab}(\V_{p} \otimes_{\OO_{X, p}} R_p)$ where $R_p$ is the integral closure of $\hat{\OO}_{X, p}$ in $\K_p$ for every $p \in S$;
\end{enumerate}
which satisfy the following conditions $\forall p \in S$:
\begin{enumerate}
\item[(a)] $\Psi_p(g)(r \cdot x) = \psi_{p}(g)(r) \cdot \Psi_p(g)(x)$ for any $g \in I_p, r \in R_p$ and $x \in \V_{p} \otimes_{\OO_{X, p}} R_p$ where $\psi_p : I_p \to \Aut_{\Ring}(R_p)$ is the natural action;
\item[(b)] for the induced actions $\Psi_{p}^{0} = I_p \xrightarrow{\Psi_p} \Aut_{\Ab}(\V_{p} \otimes_{\OO_{X, p}} R_p) \to \Aut_{\Ab}(\V_{p} \otimes_{\OO_{X, p}} \K_p)$ and $\psi_{p}^{0} = I_p \xrightarrow{\psi_p} \Aut_{\Ring}(R_p) \to \Aut_{\Ring}(\K_p)$ we have $I_p$ equivariant isomorphisms $\V_{p} \otimes_{\OO_{X, p}} \K_p \xrightarrow{\mu_p} \V_{p} \otimes_{\OO_{X, p}} \K_p$ where the action on the left is given by $\text{Id}_{\V_{p}} \otimes \psi_{p}^{0}$ and the action on the right is given by $\Psi_{p}^{0}$. 
\end{enumerate}
\end{defn}

\begin{defn} \label{genparabolicdef}
Let $\PP$ be a branch data on $X$ with $\Supp(\PP) = \{p_1, \ldots, p_N \}$. By an algebraic parabolic bundle on $X$ with branch data $\PP$ we would mean a triple $(\V, \{\Psi_p\}_{p \in \Supp(\PP)}, \{\mu_p\}_{p \in \Supp(\PP)})$ where $\V$ is a vector bundle on $X$ and $(\{\PP(p)/\K_{X, p}\}_{p \in \Supp(\PP)}, \{\Psi_p\}_{p \in \Supp(\PP)}, \{\mu_p\}_{p \in \Supp(\PP)})$ is a parabolic structure on $\V$ supported on $\{p_1, \ldots, p_N \}$. 
\end{defn}

It is clear from the definition that the trivial parabolic bundle of rank $n$ on $X$ with branch data $\PP$ is nothing but the trivial rank $n$ vector bundle $\OO_X^{\oplus n}$ along with trivial action and gluing data. More precisely, for each $p \in \Supp(\PP)$ we must have $\Psi_p = Id_{\V_p} \otimes \psi_p$ and $\mu_p = Id_{\V_{p} \otimes_{\OO_{X, p}} \K_p}$. We will denote it by $\OO_X^{\oplus n}$.
\begin{defn} 
A morphism between two algebraic parabolic bundles on $X$, $(\V, \{\Psi_p\}_{p \in \Supp(\PP)}, \allowbreak \{\mu_p\}_{p \in \Supp(\PP)})$ and $(\V^{\prime}, \{\Psi^{\prime}_p\}_{p \in \Supp(\PP)}, \{\mu^{\prime}_p\}_{p \in \Supp(\PP)})$ with the same branch data $\PP$ is given by a pair $(g, \{\sigma_p\}_{p \in \Supp(\PP)})$ where $g : \V \to \V^{\prime}$ is a homomorphism of bundles and $\sigma_p : \V_{p} \otimes_{\OO_{X, p}} R_p \to \V^{\prime}_{p} \otimes_{\OO_{X, p}} R_p$ is a homomorphism of $R_p$ modules where $R_p$ is the integral closure of $\hat{\OO}_{X, p}$ in $\PP(p)$ for every $p \in \Supp(\PP)$. Also $\sigma_p$ is assumed to be $I_p$ equivariant with respect to the actions induced by $\Psi_p, \Psi^{\prime}_p$ and further it makes the following diagram commute:
\[\xymatrixcolsep{5pc}
\xymatrix{
\V_{p} \otimes_{\OO_{X, p}} \PP(p) \ar[r]^{g_{p} \otimes Id} \ar[d]^{\mu_p} & 
\V^{\prime}_{p} \otimes_{\OO_{X, p}} \PP(p) \ar[d]^{\mu^{\prime}_p} \\
\V_{p} \otimes_{\OO_{X, p}} \PP(p) \ar[r]^{\sigma^{0}_p} & \V^{\prime}_{p} \otimes_{\OO_{X, p}} \PP(p)}
\] 
where $\sigma^{0}_p$ is the map naturally induced from $\sigma_p$.
\end{defn}

We denote the category of algebraic parabolic bundles on $X$ with branch data $\PP$ by $\PV(X, \PP)$. A typical element of $\PV(X, \PP)$ will be written as $(\V, \allowbreak \{\Psi_x\}_{x \in \Supp(\PP)}, \allowbreak \{\mu_x\}_{x \in \Supp(\PP)})$, or as $(\V, \Psi, \mu)$, or just as $\V$ when the additional data is clear from the context.
\begin{rmk}
Note that $\K_{X, x} \cong k((t))$ where $t$ is a uniformizing parameter of $\hat{\OO}_{X, x}$. This would force $\Gal(\PP(x)/\K_{X, x})$ to be either a cyclic group (when characteristic of $k$ is zero) or be of the form $H \rtimes \mu_r$ where $H$ is $p$-group and $\mu_r$ is a cyclic group with $(p, r) = 1$ (when characteristic of $k$ is $p > 0$). See \cite{SerreLocal} for more details.  
\end{rmk}

\subsection*{Convention}
For the next two subsections we restrict ourselves to the case when the support of the branch data consists of only a single point i.e. $\Supp(\PP) = \{p\}$ and we denote $\K = \PP(p), I = \Gal(\PP(p)/\K_{X, p})$ and the integral closure of $\hat{\OO}_{X, p}$ in $\PP(p)$ by $R$.

\subsection{$G$-bundles to parabolic bundles} 

As before, let $\pi : Y \to X$ be a $G$-Galois cover. Let $\E$ be a $G$-bundle on $Y$. We want to construct a parabolic bundle out of $\E$ on $X$. Consider the sheaf $\pi_*\E$. The $G$ action on $\E$ induces a $G$ action on the direct image sheaf. Define $\V = (\pi_*\E)^G$, the sheaf of invariant sections. As $\pi$ is finite and flat, the sheaf $\pi_*\E$ is locally free. Clearly $(\pi_*\E)^G \subseteq \pi_*\E$ and hence locally free.

Let $B$ be the branch locus of $\pi$. Then $B$ is a finite set. For simplicity we assume $B$ contains just one point $p$. Let $\pi^{-1}(p) = \{q_1, \ldots, q_l \} = S$. Let $G_i$ be the inertia group at $q_i$ for $1 \leq i \leq l$. We know that $\K_{Y, q_i}/\K_{X, p}$ is a Galois extension with Galois group $G_i$. Let $R_i := \hat{\OO}_{Y, q_i}$, which can also be thought of as the integral closure of $\hat{\OO}_{X, p}$ in $\K_{Y, q_i} = \K_i$, and let $R = \prod_{i = 1}^l R_i$. Let $\hat{S} := Spec(R) = \coprod_{i =1}^l Spec(R_i)$ which can also be thought of as the completion of $Y$ along $S$.

Let us denote by $\lambda : G \to \Aut_{\Sch}(Y)$ the $G$ action on $Y$. Observe that $\lambda$ induces a transitive action of $G$ on $S$. This induces a transitive action of $G$ on the set of indices $\{1, \ldots, l \}$ given by : $i \mapsto g \cdot i \Leftrightarrow \lambda(g)(q_i) = q_j$. We also have an induced action of $G$ on $\hat{S}$. We call this action also as $\lambda$. Note that $G_i = Stab_G(Spec(R_i))$. Hence we have an induced action of $G_i$ on $Spec(R_i)$ denoted by $\lambda_i$.

Let $\Lambda(g) : \E \arsim \lambda(g)^{*}\E$, for any $g \in G$, denote the $G$ action on $\E$. As above $\Lambda(g)$'s induce $G$ action on $\hat{\E} := \varprojlim \E/\II_S^n\E = \E \otimes_{\OO_Y} \OO_{\hat{S}}$ compatible with the $G$ action on $\hat{S}$. Also we observe that $\hat{\E} \cong (\E \otimes_{\OO_Y} \OO_S) \otimes_{\OO_S} \OO_{\hat{S}} \cong (\prod_{i=1}^l \E_{q_i}) \otimes_{\OO_S} \OO_{\hat{S}}$. By our construction, we can naturally identify $\E_{q_i}$ with $\V_p \otimes_{\OO_{X, p}} \OO_{Y, q_i}$. Hence 
\begin{align} \label{compidentity}
\hat{\E}_{q_i} \cong \V_p \otimes_{\OO_{X, p}} \hat{\OO}_{Y, q_i}, \ \ 
\hat{\E} \cong \prod_{i=1}^l (\V_p \otimes_{\OO_{X, p}} \hat{\OO}_{Y, q_i}).
\end{align}
Let $\hat{\E}_i := \hat{\E} \otimes_{\OO_{\hat{S}}} R_i \cong \V_p \otimes_{\OO_{X, p}} \hat{\OO}_{Y, q_i}$. Then we have an induced $G_i$ action on the bundle $\hat{\E}_i$, denoted by $\Lambda_i$, which is compatible with $\lambda_i$.

Now using Lemma \ref{groupaction} we would restate all the data obtained in terms of rings and modules. $\lambda : G \to \Aut_{\Sch}(\hat{S})$ corresponds to a group homomorphism $\phi : G \to \Aut_{\Ring}(R)$. For any $g \in G$ and $1 \leq i \leq l$ we have we have isomorphisms $\alpha(g) : R_i \arsim R_{g \cdot i}$ induced by $\phi(g)$. $\lambda_i : G_i \to \Aut_{\Sch}(Spec(R_i))$ corresponds to a group homomorphism $\phi_i : G_i \to \Aut_{\Ring}(R_i)$. From our definitions it is clear that $\phi_{g \cdot i}(gag^{-1}) = \alpha(g) \circ \phi_i(a) \circ \alpha(g)^{-1}$ for any $a \in G_i$.

Now we take note of the fact that $R_i$ is a $DVR$ for $1 \leq i \leq l$. Hence $\hat{\E}_i(Spec(R_i)) = \V_p \otimes_{\OO_{X, p}} R_i$ and $\hat{\E}_i(Spec(\K_i)) = \V_p \otimes_{\OO_{X, p}} \K_i$. Further $\hat{\E}(\hat{S}) = \prod_{i=1}^l \hat{\E}_i(Spec(R_i)) \cong \prod_{i=1}^l \V_p \otimes_{\OO_{X, p}} R_i$. The $G$ action on $\hat{\E}$ corresponds to a group homomorphism $\Phi : G \to \Aut_{\Ab}(\prod_{i=1}^l \V_p \otimes_{\OO_{X, p}} R_i)$, such that $\Phi(g)(r \cdot x) = \phi(g)(r) \cdot \Phi(g)(x)$ for any $g \in G, r \in R, x \in \prod_{i=1}^l \V_p \otimes_{\OO_{X, p}} R_i$. As before $\Phi(g)$ induces isomorphisms $\theta(g) : \V_p \otimes_{\OO_{X, p}} R_i \arsim \V_p \otimes_{\OO_{X, p}} R_{g \cdot i}$. Similarly the $G_i$ action on $\hat{\E}_i$ corresponds to a group homomorphism $\Phi_i : G \to \Aut_{\Ab}(\V_p \otimes_{\OO_{X, p}} R_i)$ such that $\Phi_i(g)(r \cdot x) = \phi_i(g)(r) \cdot \Phi_i(g)(x)$ for any $g \in G_i, r \in R_i, x \in \V_p \otimes_{\OO_{X, p}} R_i$. As before, we have $\Phi_{g \cdot i}(gag^{-1}) = \theta(g) \circ \Phi_i(a) \circ \theta(g)^{-1}$ for any $a \in G_i, g \in G$. We also have $\theta(g)(r \cdot x) = \alpha(g)(r) \cdot \theta(g)(x)$ for any $g \in G, r \in R_i, x \in \V_p \otimes_{\OO_{X, p}} R_i$ (this is nothing but the linearity condition on $\Phi(g)$ restated).

Let $S^0 := \hat{S} \times_{Y} Y^0$. Note that $S^0 \cong \coprod_{i=1}^l Spec(\K_i)$ and we have a $G$ action on $S^0$ induced by $\phi$. In our situation we have $\E|Y^0 \in \mathcal{G}(Y^0)$ and $\hat{\E} \in \mathcal{G}(\hat{S})$. As the map $\pi : Y^0 \to X^0 = X-B$ is unramified Galois, we know that $\E|Y^0 \cong \pi^*(\V|X^0)$ and the $G$ action on $\pi^*(\V|X^0)$ is the natural action induced on pullback sheaves. After base change to $S^0$ we have $\E|Y^0 \otimes_{\OO_{Y^0}} \OO_{S^0} \in \mathcal{G}(S^0)$ with the induced $G$ action. Note that $\E|Y^0 \otimes_{\OO_{Y^0}} \OO_{S^0} \cong \prod_{i=1}^l (\V_p \otimes_{\OO_{X, p}} \K_i)$ and the $G$ action on $\prod_{i=1}^l (\V_p \otimes_{\OO_{X, p}} \K_i)$ is defined by the composite : $G \xrightarrow{\phi} \Aut_{\Ring}(R) \to \Aut_{\Ring}(\prod_{i=1}^l \K_i)$, which we denote by $\phi^0$.

On the other hand we have $\hat{\E}$ with $G$ action given by $\Lambda(g)$'s. After base change to $S^0$ we have $\hat{\E} \otimes_{\OO_{\hat{S}}} \OO_{S^0} \in \mathcal{G}(S^0)$ with the induced $G$ action. Note that $\hat{\E} \otimes_{\OO_{\hat{S}}} \OO_{S^0} \cong \prod_{i=1}^l (\V_p \otimes_{\OO_{X, p}} \K_i)$ and the $G$ action on $\prod_{i=1}^l (\V_p \otimes_{\OO_{X, p}} \K_i)$ is defined by $G \xrightarrow{\Phi} \Aut_{\Ab}(\prod_{i=1}^l \V_p \otimes_{\OO_{X, p}} R_i) \to \Aut_{\Ab}(\prod_{i=1}^l \V_p \otimes_{\OO_{X, p}} \K_i)$, which we denote by $\Phi^0$. Now we have a canonical isomorphism of $G$-sheaves $\E|Y^0 \otimes_{\OO_{Y^0}} \OO_{S^0} \arsim \hat{\E} \otimes_{\OO_{\hat{S}}} \OO_{S^0}$. In other words an isomorphism of $G$-sheaves $\tau : \prod_{i=1}^l (\V_p \otimes_{\OO_{X, p}} \K_i) \arsim \prod_{i=1}^l (\V_p \otimes_{\OO_{X, p}} \K_i)$ where the $G$ action on the source is given by $\phi^0$ and on the target it is given by $\Phi^0$. As before we have induced isomorphism of $G_i$-sheaves $\tau_i : \V_p \otimes_{\OO_{X, p}} \K_i \arsim \V_p \otimes_{\OO_{X, p}} \K_i$ which satisfy $\tau_{g \cdot i} = \theta(g) \circ \tau_i \circ \theta(g)^{-1}$ for any $g \in G$.

All of the above can be summarized as :
\begin{prop} \label{orbitopara}
Let $\pi : Y \to X$ be a $G$-Galois cover with branch locus $\{p\}$. Let $\E$ be a $G$-bundle on $Y$. Then we can construct an algebraic parabolic bundle $\V$ on $X$ with branch data $\PP$ such that $\Supp(\PP) = \{p\}$.
\end{prop}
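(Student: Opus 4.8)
The plan is to read the parabolic bundle directly off the construction already carried out in the paragraphs preceding the statement, so that the proof reduces to assembling the pieces and matching them against Definition \ref{genparabolicdef}. For the underlying vector bundle I would take $\V = (\pi_*\E)^G$, which is locally free as a subsheaf of the locally free sheaf $\pi_*\E$ (here one uses that $\pi$ is finite and flat). Since $\E|Y^0 \cong \pi^*(\V|X^0)$ over the unramified locus $Y^0$, no nontrivial local data is needed away from $p$, so the support of the resulting branch data will be exactly $\{p\}$.

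The key conceptual step is to break the symmetry among the points $q_1,\dots,q_l$ lying over $p$ by choosing one of them, say $q_1$, and setting $\PP(p) := \K_1 = \K_{Y,q_1}$. This is a finite Galois extension of $\K_{X,p}$ with Galois group $I := G_1$ and ring of integers $R_1 = \hat{\OO}_{Y,q_1}$, the integral closure of $\hat{\OO}_{X,p}$ in $\K_1$. Using the identification $\hat{\E}_1 \cong \V_p \otimes_{\OO_{X,p}} \hat{\OO}_{Y,q_1}$ from (\ref{compidentity}) together with Lemma \ref{groupaction}, I would declare $\Psi_p := \Phi_1 \colon G_1 \to \Aut_{\Ab}(\V_p \otimes_{\OO_{X,p}} R_1)$. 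Condition (a) of the parabolic structure is then literally the linearity identity $\Phi_1(g)(r\cdot x) = \phi_1(g)(r)\cdot\Phi_1(g)(x)$ recorded during the construction, so it holds with no further argument.

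For the gluing isomorphism I would set $\mu_p := \tau_1$, the isomorphism $\V_p \otimes_{\OO_{X,p}} \K_1 \arsim \V_p \otimes_{\OO_{X,p}} \K_1$ obtained by restricting to the first factor the canonical $G$-equivariant comparison $\tau$ between $\E|Y^0 \otimes \OO_{S^0}$ and $\hat{\E}\otimes\OO_{S^0}$ over $S^0 \cong \coprod_i Spec(\K_i)$. By construction the source of $\tau_1$ carries the action $\text{Id}_{\V_p}\otimes\psi_1^0$ (the Galois action on $\K_1$ inherited from the pullback description over $X^0$) while the target carries $\Psi_p^0 = \Phi_1^0$, and $\tau_1$ intertwines the two; this is precisely condition (b). Thus $(\V, \Psi_p, \mu_p)$ meets every requirement of Definition \ref{genparabolicdef} and defines an object of $\PV(X,\PP)$.

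The one genuinely substantive input — and the step I would flag as the main obstacle — is the existence of the $G$-equivariant comparison $\tau$ over $S^0$, equivalently the compatibility of the \'etale trivialization $\E|Y^0 \cong \pi^*(\V|X^0)$ with the completed sheaf $\hat{\E}$ after base change to $S^0$. This rests on the fact that $\V = (\pi_*\E)^G$ recovers $\E$ after pullback over the unramified locus (faithfully flat descent along the \'etale $G$-cover $Y^0 \to X^0$), which is where the definition of $\V$ as the sheaf of $G$-invariants is actually used. Once $\tau$ is in hand, passing to the factor indexed by $q_1$ produces $\tau_1$ and the remainder is the formal bookkeeping described above. As this compatibility was already established in the discussion preceding the statement, the proof itself amounts to collecting the data and invoking Definition \ref{genparabolicdef}.
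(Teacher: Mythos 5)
Your proposal is correct and takes essentially the same route as the paper, whose proof likewise just sets $\V=(\pi_*\E)^G$, $\K=\K_1$, $\Psi=\Phi_1$, $\mu=\tau_1$ and appeals to the preceding discussion. The only difference is that you spell out the verification of conditions (a) and (b) and explicitly flag the descent fact $\E|Y^0\cong\pi^*(\V|X^0)$ underlying the existence of $\tau$, which the paper leaves implicit.
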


\begin{proof}
We define $\V = (\pi_{*}\E)^{G}, \K/\K_{X, p} := \K_{1}/\K_{X, p}, \Psi := \Phi_{1}, \mu := \tau_{1}$. From the above discussion it is clear that this defines an algebraic parabolic bundle on $X$ with branch data supported on $\{p\}$.
\end{proof}   

Now given a morphism of two $G$-bundles on $Y$, $ f : \E \to \E^{\prime}$, we have an induced homomorphism of bundles $g = (\pi_{*}f)^G : \V \to \V^{\prime}$ where $\V^{\prime} = (\pi_{*}\E^{\prime})^G$. Similarly we have induced morphisms of $G_i$-bundles $\hat{f}_i : \hat{\E}_i \to \hat{\E^{\prime}}_i$. Note that $f|Y^{0} = \pi^{*}(g|X^{0})$. 
\begin{prop}
Let $\pi : Y \to X$ be a $G$-Galois cover with branch locus $\{p\}$. Let $\E, \E^{\prime}$ be $G$-bundles on $Y$ and let $f : \E \to \E^{\prime}$ be a morphism of $G$-bundles. Let $\V, \V^{\prime}$ be the algebraic parabolic bundles constructed on $X$ with branch data $\PP$ according to Proposition \ref{orbitopara}. Then there is a morphism $(g, \sigma) : \V \to \V^{\prime}$ of algebraic parabolic bundles with branch data $\PP$.
\end{prop}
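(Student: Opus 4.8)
The plan is to build the pair $(g,\sigma)$ directly out of $f$ and then verify, one by one, the three conditions in the definition of a morphism of parabolic bundles. For the underlying bundle map I take $g = (\pi_*f)^G : \V \to \V^{\prime}$, which is the homomorphism already recorded in the discussion preceding the statement. For the second component I use the induced morphism of $G_1$-bundles $\hat{f}_1 : \hat{\E}_1 \to \hat{\E^{\prime}}_1$: passing to global sections over $Spec(R_1)$ and invoking the identifications of \eqref{compidentity}, namely $\hat{\E}_1(Spec(R_1)) \cong \V_p \otimes_{\OO_{X,p}} R$ and its analogue for $\E^{\prime}$, yields an $R$-module homomorphism
\[
\sigma : \V_p \otimes_{\OO_{X,p}} R \to \V^{\prime}_p \otimes_{\OO_{X,p}} R,
\]
which I take as $\sigma_p$. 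It is automatically $R$-linear since $\hat{f}_1$ is a morphism of coherent sheaves.

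Next I would check that $\sigma$ is $I$-equivariant. Because $f$ is a morphism of $G$-bundles, the induced map $\hat{f}_1$ is a morphism of $G_1$-bundles, hence commutes with the respective $G_1$-actions on $\hat{\E}_1$ and $\hat{\E^{\prime}}_1$. Translating this through the morphism part of Lemma \ref{groupaction}, it becomes precisely the relation $\sigma \circ \Phi_1(a) = \Phi^{\prime}_1(a) \circ \sigma$ for every $a \in G_1 = I$. Since $\Psi = \Phi_1$ and $\Psi^{\prime} = \Phi^{\prime}_1$ by Proposition \ref{orbitopara}, this is exactly the equivariance demanded in the definition.

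The main step, which I expect to be the real obstacle, is the commutativity of the square relating $\mu = \tau_1$, $\mu^{\prime} = \tau^{\prime}_1$, $g_p \otimes \id$ and $\sigma^0$. Here I would exploit that $\tau$ is the \emph{canonical} isomorphism of $G$-sheaves $\tau : \E|Y^0 \otimes_{\OO_{Y^0}} \OO_{S^0} \arsim \hat{\E} \otimes_{\OO_{\hat{S}}} \OO_{S^0}$, arising because both sides are the restriction of the single sheaf $\E$ to $S^0$, via $Y^0$ and via $\hat{S}$ respectively. Since $f : \E \to \E^{\prime}$ is one morphism of sheaves on $Y$, its two restrictions $(f|Y^0) \otimes \OO_{S^0}$ and $\hat{f} \otimes \OO_{S^0}$ to $S^0$ are intertwined by $\tau$ and $\tau^{\prime}$ by naturality of base change, i.e. the square
\[\xymatrixcolsep{4pc}
\xymatrix{
\E|Y^0 \otimes_{\OO_{Y^0}} \OO_{S^0} \ar[r]^{\tau} \ar[d] & \hat{\E} \otimes_{\OO_{\hat{S}}} \OO_{S^0} \ar[d] \\
\E^{\prime}|Y^0 \otimes_{\OO_{Y^0}} \OO_{S^0} \ar[r]^{\tau^{\prime}} & \hat{\E^{\prime}} \otimes_{\OO_{\hat{S}}} \OO_{S^0}
}
\]
commutes. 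The relation $f|Y^0 = \pi^*(g|X^0)$ identifies the left vertical arrow, after base change, with $g_p \otimes \id$ on each factor, while the right vertical arrow restricts to $\sigma^0$ on the factor indexed by $q_1$. Restricting the square to the component indexing $\K = \K_1$ and passing to global sections then gives exactly
\[
\mu^{\prime} \circ (g_p \otimes \id) = \sigma^0 \circ \mu,
\]
the commutativity required of a morphism of parabolic bundles. The substance of the argument is the careful bookkeeping of the canonical identifications of \eqref{compidentity} together with the $G$-actions $\phi^0,\Phi^0$; once naturality of the formal-completion base change is invoked, the remaining diagram chase is routine.
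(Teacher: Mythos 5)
Your construction is exactly the paper's: the paper also sets $g=(\pi_*f)^G$ and $\sigma=\hat f_1$ (its entire proof is ``Put $\sigma=\hat f_1$ and the result follows,'' relying on the discussion just before the proposition, including the identity $f|Y^0=\pi^*(g|X^0)$). Your write-up simply makes explicit the equivariance and base-change naturality checks that the paper leaves implicit, and those checks are correct.
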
 

\begin{proof}
Put $\sigma = \hat{f}_1$ and the result follows.
\end{proof}

\subsection{Parabolic bundles to $G$-bundles}
 
Let $(\V, \Psi, \mu)$ be an algebraic parabolic bundle on $X$ with branch data $\PP$. Suppose we are given a $G$-Galois cover $\pi : Y \to X$ such that
\begin{enumerate}
\item[(i)] $\pi$ is branched only at $p$ with $\pi^{-1}(p) = \{y_1, \ldots, y_l\} = S$;
\item[(ii)] let $\K_i$ be the quotient field of $R_i := \hat{\OO}_{Y, y_i}$, then the extension $\K_i/\K_{X, p}$ is isomorphic to the extension $\K/\K_{X, p}$ for $1 \leq i \leq l$.
\end{enumerate}
In particular we have induced isomorphisms of Galois groups $G_i \cong I$ for $1 \leq i \leq l$. We would like to construct a $G$-bundle on $Y$ from this data. 

Without loss of generality we may assume that $\K_1 = \K$ and consequently $R_1 = R, G_1 = I$. As discussed before, the data of a $G$-Galois cover gives us group homomorphisms $\phi : G \to \Aut_{\Ring}(\prod_{i=1}^l R_i)$ and $\phi_i : G_i \to \Aut_{\Ring}(R_i)$ for $1 \leq i \leq l$. Note that by our assumption $\phi_1 = \psi : I \to \Aut_{\Ring}(R)$. 

Let us fix elements $g_{ij} \in G$ such that $g_{ij} \cdot y_i = y_j$ for $1 \leq i, j \leq l$ and $g_{ik} = g_{jk}g_{ij}, g_{ii} = 1$. Then we have induced isomorphisms $\alpha_{ij} : R_i \to R_j$ which satisfy $\phi_j(g_{ij}ag_{ij}^{-1}) = \alpha_{ij} \circ \phi_{i}(a) \circ \alpha_{ij}^{-1}$ for $a \in G_i$ and $\alpha_{ik} = \alpha_{jk} \circ \alpha_{ij}, \alpha_{ii} = Id$. Define $\theta_{ij} : \V_p \otimes_{\OO_{X, p}} R_i \to \V_p \otimes_{\OO_{X, p}} R_j$ by $x \otimes r \mapsto x \otimes \alpha_{ij}(r)$. Set $\Phi_1 = \Psi : G_1 \to \Aut_{\Ab}(\V_p \otimes_{\OO_{X, p}} R_1)$. Define $\Phi_j : G_j \to \Aut_{\Ab}(\V_p \otimes_{\OO_{X, p}} R_j)$ by $\Phi_j(b) = \theta_{1j} \circ \Phi_1(g_{1j}^{-1}bg_{1j}) \circ \theta_{1j}^{-1}$. Then the following holds
\begin{itemize}
\item $\theta_{ik} = \theta_{jk} \circ \theta_{ij}$;
\item $\theta_{ij}(r \cdot x) = \alpha_{ij}(r) \cdot \theta_{ij}(x)$ for $r \in R_i, x \in \V_p \otimes_{\OO_{X, p}} R_i$;
\item $\Phi_j(g_{ij}ag_{ij}^{-1}) = \theta_{ij} \circ \Phi_{i}(a) \circ \theta_{ij}^{-1}$ for $a \in G_i$;
\item $\Phi_{i}(g)(r \cdot x) = \phi_{i}(g)(r) \cdot \Phi_{i}(g)(x)$ for any $g \in G_i, r \in R_i, x \in \V_p \otimes_{\OO_{X, p}} R_i$.
\end{itemize}
Hence, by Lemma \ref{constructbundle}, we have a $G$-bundle $\E^1$ on $\hat{S}$ corresponding to a group homomorphism $G \xrightarrow{\Phi} \Aut_{\Ab}(\prod_{i=1}^l \V_p \otimes_{\OO_{X, p}} R_i)$. Note that by Lemma \ref{constructbundleindep} the $G$-bundle $\E^1$ is independent of our choice of $\{g_{ij}\}$'s.

Let $\E^0 := \pi^{*}(\V|X^{0})$ which is a $G$-bundle on $Y^{0}$ with a natural $G$ action. As before $\E^{0} \otimes_{\OO_{Y^0}} \OO_{S^0} \cong \prod_{i=1}^l (\V_p \otimes_{\OO_{X, p}} \K_i)$ and the $G$ action on $\prod_{i=1}^l (\V_p \otimes_{\OO_{X, p}} \K_i)$ is defined by the composite : $G \xrightarrow{\phi} \Aut_{\Ring}(R) \to \Aut_{\Ring}(\prod_{i=1}^l \K_i)$, which we denote by $\phi^0$. Similarly we can check that $\E^1 \otimes_{\OO_{\hat{S}}} \OO_{S^0} \cong \prod_{i=1}^l (\V_p \otimes_{\OO_{X, p}} \K_i)$ and the $G$ action on $\prod_{i=1}^l (\V_p \otimes_{\OO_{X, p}} \K_i)$ is defined by $G \xrightarrow{\Phi} \Aut_{\Ab}(\prod_{i=1}^l \V_p \otimes_{\OO_{X, p}} R_i) \to \Aut_{\Ab}(\prod_{i=1}^l \V_p \otimes_{\OO_{X, p}} \K_i)$, which we denote by $\Phi^0$.

Define a map $\tau : \prod_{i=1}^l \V_p \otimes_{\OO_{X, p}} \K_i \to \prod_{i=1}^l \V_p \otimes_{\OO_{X, p}} \K_i$ by $\tau(x_1, \ldots, x_l) = (y_1, \ldots, y_l)$ where $y_j := \theta_{1j} \circ \mu \circ \theta_{1j}^{-1}(x_j)$. Note that we are denoting the map induced by $\theta_{ij}$ from $\V_p \otimes_{\OO_{X, p}} \K_i \to \V_p \otimes_{\OO_{X, p}} \K_i$ by $\theta_{ij}$. Then it follows from the definitions that $\tau$ is $G$- equivariant where $G$ acts on the source by $\phi^0$ and on the target by $\Phi^0$ and hence an isomorphism of $G$-bundles. By Theorem \ref{formalgluing} we get a $G$ bundle $\E$ on $Y$.
 
\begin{prop} \label{paratoorbi}
Let $(\V, \Psi, mu)$ be an algebraic parabolic bundle on $X$ with branch data $\PP$ as defined in Definition \ref{genparabolicdef}. Suppose we are given a $G$-Galois cover $\pi : Y \to X$ such that
\begin{enumerate}
\item[(i)] $\pi$ is branched only at $p$ with $\pi^{-1}(p) = \{y_1, \ldots, y_l\}$;
\item[(ii)] let $\K_i$ be the quotient field of $R_i := \hat{\OO}_{Y, y_i}$, then the extension $\K_i/\K_{X, p}$ is isomorphic to the extension $\K/\K_{X, p}$ for $1 \leq i \leq l$.
\end{enumerate}
Then we can construct a $G$ bundle $\E$ on $Y$ which gives back the original algebraic parabolic bundle when we apply the construction in Proposition \ref{orbitopara}. 
\end{prop}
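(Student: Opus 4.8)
The construction of the required $G$-bundle $\E$ has in fact already been carried out in the discussion immediately preceding the statement: starting from the local data $\Phi_1 = \Psi$, the isomorphisms $\theta_{ij}$ and $\alpha_{ij}$, one first produces via Lemma \ref{constructbundle} a $G$-bundle $\E^1$ on $\hat{S}$, which Lemma \ref{constructbundleindep} shows to be independent of the auxiliary choice of $\{g_{ij}\}$; one then glues $\E^1$ to $\E^0 = \pi^*(\V|X^0)$ along the $G$-equivariant isomorphism $\tau$ over $S^0$, and Theorem \ref{formalgluing} delivers the $G$-bundle $\E$ on $Y$. Thus the only thing left to prove is that feeding this $\E$ into the construction of Proposition \ref{orbitopara} returns a parabolic bundle isomorphic to $(\V, \Psi, \mu)$. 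The plan is therefore to compare the three outputs of \ref{orbitopara}---the bundle $(\pi_*\E)^G$, the action $\Phi_1$ and the gluing $\tau_1$---with $\V$, $\Psi$ and $\mu$ respectively.

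Two of these comparisons are immediate from the construction. By Theorem \ref{formalgluing} the restriction $\E|\hat{S}$ is canonically $\E^1$, so its component at $y_1$ carries exactly the $G_1$-action $\Phi_1$ we started with, namely $\Psi$; hence the action recovered by \ref{orbitopara} is $\Phi_1 = \Psi$. Likewise the canonical comparison isomorphism over $S^0$ attached to $\E$ is, by the very gluing used to build $\E$, the map $\tau$; and since $\theta_{11} = Id$ we have $\tau_1 = \theta_{11}\circ\mu\circ\theta_{11}^{-1} = \mu$, so the gluing datum recovered by \ref{orbitopara} is $\mu$.

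It remains to identify the underlying bundle, i.e. to produce an isomorphism $(\pi_*\E)^G \cong \V$ compatible with the identifications of local data at $p$. Away from the branch point this is clear: by construction $\E|Y^0 = \pi^*(\V|X^0)$, and for the unramified $G$-cover $Y^0 \to X^0$ one has $(\pi_*\pi^*(\V|X^0))^G = \V|X^0$ (projection formula together with $(\pi_*\OO_{Y^0})^G = \OO_{X^0}$), so the two bundles agree on $X^0$. Near $p$ I would pass to completions: since completion is flat and taking $G$-invariants is the kernel of $m \mapsto (g\cdot m - m)_{g\in G}$, invariants commute with completion, whence $\widehat{(\pi_*\E)^G_p} \cong (\hat{\E}(\hat{S}))^G = \big(\prod_{i=1}^l \V_p \otimes_{\OO_{X,p}} R_i\big)^G$. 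Because $G$ permutes the factors transitively with $\mathrm{Stab}_G(1) = G_1 = I$, projection to the first factor identifies this with $(\V_p \otimes_{\OO_{X,p}} R)^{I}$ for the action $\Psi$; using condition (b) of the parabolic structure (the isomorphism $\mu$) one checks, after inverting the uniformizer, that this invariant module has the same rank as $\V$, and being finitely generated and torsion-free over the complete DVR $\hat{\OO}_{X,p}$ it is free, hence isomorphic to $\V_p \otimes_{\OO_{X,p}}\hat{\OO}_{X,p}$. Matching this local isomorphism with the one on $X^0$ through $\tau$ and invoking the non-equivariant formal gluing statement then yields $(\pi_*\E)^G \cong \V$.

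The step I expect to be the main obstacle is precisely this last identification: the invariant lattice $(\V_p\otimes_{\OO_{X,p}}R)^{I}$ must be matched, as a lattice in the generic fibre $\V_p\otimes_{\OO_{X,p}}\K_{X,p}$, with $\V_p\otimes_{\OO_{X,p}}\hat{\OO}_{X,p}$ in a way compatible with the comparison isomorphism $\tau$ (equivalently, with the identification \eqref{compidentity} used in Proposition \ref{orbitopara}). This is the only point that genuinely uses condition (b) together with the Galois-theoretic fact $R^{I} = \hat{\OO}_{X,p}$, and it is where one must be careful that the two descriptions of the local data at $p$, coming from $\E^0$ and from $\E^1$, are reconciled by $\mu$. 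Everything else is a formal consequence of Lemma \ref{constructbundle}, Lemma \ref{constructbundleindep} and Theorem \ref{formalgluing}.
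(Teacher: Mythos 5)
Your overall strategy is the same as the paper's: the paper's entire proof of Proposition \ref{paratoorbi} is the one-line assertion that the claim is immediate from the construction preceding the statement together with the construction of Proposition \ref{orbitopara}. You go further by actually trying to verify the round trip, and your treatment of two of the three pieces of data is fine: the recovered action is $\Phi_1=\Psi$ and the recovered gluing datum is $\tau_1=\mu$ essentially by construction.

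The problem is the third piece, and you have located it correctly but not closed it. Your argument that $\bigl(\prod_{i}\V_p\otimes_{\OO_{X,p}}R_i\bigr)^G\cong(\V_p\otimes_{\OO_{X,p}}R)^{I}$ is free of the right rank over $\hat{\OO}_{X,p}$ only produces an \emph{abstract} module isomorphism with $\V_p\otimes_{\OO_{X,p}}\hat{\OO}_{X,p}$. To conclude $(\pi_*\E)^G\cong\V$ you need more: since $\E$ is built by gluing $\pi^*(\V|X^0)$ to $\E^1$ along $\tau$, the completed stalk of $(\pi_*\E)^G$ at $p$ is the lattice $\mu^{-1}\bigl((\V_p\otimes_{\OO_{X,p}}R)^{\Psi(I)}\bigr)$ sitting inside the generic fibre $\V_p\otimes_{\OO_{X,p}}\K_{X,p}$ of $\V|X^0$, and what is required is that this lattice agree with (or at least have the same elementary divisors as) $\V_p\otimes_{\OO_{X,p}}\hat{\OO}_{X,p}$; an abstract isomorphism of free modules does not yield a global isomorphism of the glued bundles. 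This is not a formality: the axioms (a) and (b) of a parabolic structure do not pin down $\mu$ integrally, only up to an equivariant automorphism of the generic fibre. For instance, take $\V=\OO_X$, $I=\mathbb{Z}/n$ acting tamely with $\psi(\gamma)(t)=\zeta t$, $\Psi(\gamma)=\zeta^{-1}\otimes\psi(\gamma)$, and $\mu$ equal to multiplication by $t\,s^{k}$ ($s$ the uniformizer downstairs); all axioms hold for every $k\in\mathbb{Z}$, but the resulting $\E$ is $\OO_Y\bigl((nk+1)\sum q_i\bigr)$ and $(\pi_*\E)^G=\OO_X(kp)$, which is not isomorphic to $\V$ unless $k=0$. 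So the step you yourself call ``the main obstacle'' is exactly where the statement has content, it genuinely uses a normalization of $\mu$ relative to the lattice $\V_p\otimes_{\OO_{X,p}}\hat{\OO}_{X,p}$ that the definition as written does not supply, and your proof does not supply it either. (In fairness, the paper's own proof, and the identification \eqref{compidentity} on which the inverse construction rests, pass over this same point in silence.)
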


\begin{proof}
It is immediate from the construction above and the construction in \ref{orbitopara}.
\end{proof}

Let $(\V^{\prime}, \Psi^{\prime}, \mu^{\prime})$ be another algebraic parabolic bundle on $X$ with the same branch data $\PP$. By the above Proposition we get a $G$ bundle $\E^{\prime}$ on $Y$. Let $(g, \sigma)$ be a morphism of parabolic bundles from $(\V, \Psi, \mu) \to (\V^{\prime}, \Psi^{\prime}, \mu^{\prime})$. Define $f_1 = \sigma : (\V_p \otimes_{\OO_{X, p}} R_1) \to (\V^{\prime}_p \otimes_{\OO_{X, p}} R_1)$ and $f_j = \theta^{\prime}_{1j} \circ f_1 \circ \theta_{1j}^{-1} : (\V_p \otimes_{\OO_{X, p}} R_j) \to (\V_p \otimes_{\OO_{X, p}} R_j)$, for $2 \leq j \leq l$. Then it is easy to see that the condition of Lemma \ref{morph} are satisfied and hence we have a morphism of $G$-bundles $\hat{\E} \to \hat{\E^{\prime}}$. We also have the morphism $\pi^{*}(g|X^{0}) : \E|Y^{0} \to \E^{\prime}|Y^{0}$ of trivial $G$-bundles. The compatibility condition between $g$ and $\sigma$ allows us to use Theorem \ref{formalgluing} and we get a morphism of $G$-bundles $f : \E \to \E^{\prime}$.

Finally we have
\begin{theorem} \label{equival}
Let $X, Y$ be smooth projective algebraic curves over an algebraically closed field $k$. Let $\PP$ be a branch data on $X$. Let $\pi : Y \to X$ be a morphism which makes $Y$ into a $G$-Galois cover of $X$ such that
\begin{enumerate}
\item[(i)] $\pi$ is branched only at $p$ with $\pi^{-1}(p) = \{y_1, \ldots, y_l\}$;
\item[(ii)] let $\K_i$ be the quotient field of $R_i := \hat{\OO}_{Y, y_i}$, then the extension $\K_i/\K_{X, p}$ is isomorphic to the extension $\K/\K_{X, p}$ for $1 \leq i \leq l$.
\end{enumerate}
Then the category $\PV(X, \PP)$ is equivalent to the category $\Vect_G(Y)$. 
\end{theorem}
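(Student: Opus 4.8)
The plan is to establish the equivalence $\PV(X,\PP)\simeq\Vect_G(Y)$ by exhibiting two functors, one in each direction, and showing they are mutually quasi-inverse. The functor $F:\Vect_G(Y)\to\PV(X,\PP)$ is the one assembled in Proposition \ref{orbitopara}: it sends a $G$-bundle $\E$ to the triple $(\V,\Psi,\mu)=((\pi_*\E)^G,\Phi_1,\tau_1)$, and on morphisms it acts via $f\mapsto((\pi_*f)^G,\hat f_1)$ as described in the paragraph preceding that proposition. The functor $H:\PV(X,\PP)\to\Vect_G(Y)$ is the construction carried out just before Proposition \ref{paratoorbi}: from the parabolic data one builds a $G$-sheaf $\E^1$ on $\hat S$ via Lemma \ref{constructbundle}, the pullback $\E^0=\pi^*(\V|X^0)$ on $Y^0$, and glues them along $S^0$ using the $G$-equivariant isomorphism $\tau$ and the formal patching Theorem \ref{formalgluing}. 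On morphisms, $H$ uses Lemma \ref{morph} on $\hat S$ together with $\pi^*(g|X^0)$ on $Y^0$ and glues again by Theorem \ref{formalgluing}. The first step is simply to record that $F$ and $H$ are well-defined functors, which amounts to checking functoriality (compatibility with composition and identities); this is routine since both are built out of base change, invariants, and the gluing equivalence, all of which are functorial.

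The heart of the argument is verifying the two natural isomorphisms $H\circ F\cong\id_{\Vect_G(Y)}$ and $F\circ H\cong\id_{\PV(X,\PP)}$. For $F\circ H\cong\id$, Proposition \ref{paratoorbi} already asserts that applying $H$ and then the construction of Proposition \ref{orbitopara} returns the original parabolic bundle; I would make this an explicit natural isomorphism by tracking that the invariants $(\pi_*\E)^G$ recover $\V$ (using that $\pi$ is unramified over $X^0$, so $\V|X^0=(\pi_*\E^0)^G$ there, and that the completion data at $p$ reproduces $R_1=R$, $\Phi_1=\Psi$, $\tau_1=\mu$ by the normalization $\K_1=\K$, $G_1=I$). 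For $H\circ F\cong\id$, given a $G$-bundle $\E$ one must check that reconstructing from its parabolic data returns a $G$-bundle isomorphic to $\E$. The key point is that the formal gluing Theorem \ref{formalgluing} characterizes a $G$-bundle on $Y$ uniquely by its restriction to $Y^0$, its restriction to $\hat S$, and the canonical gluing isomorphism on $S^0$; so it suffices to check that $H(F(\E))$ and $\E$ agree on each of these three pieces compatibly. On $Y^0$ both give $\pi^*(\V|X^0)$; on $\hat S$ the reconstructed $G$-sheaf $\E^1$ agrees with $\hat\E$ precisely because Lemma \ref{constructbundleindep} guarantees independence of the choice of $\{g_{ij}\}$ and because the data $(\Phi_i,\theta_{ij})$ extracted from $\E$ in Section \ref{sec:parabolic} satisfies conditions (A)--(D) of Lemma \ref{constructbundle}; and the gluing isomorphisms match by construction of $\tau$ from $\mu=\tau_1$.

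The main obstacle I anticipate is the bookkeeping in the $H\circ F\cong\id$ direction, specifically showing that the $G$-equivariant structure reconstructed on $\hat S$ coincides with the original one rather than merely that the underlying sheaves agree. Here one must verify that starting from $\E$, extracting $(\V_p\otimes R_i,\Phi_i,\theta_{ij},\tau_i)$, and then feeding only the piece at $i=1$ (namely $\Psi=\Phi_1$ and $\mu=\tau_1$) back into Lemma \ref{constructbundle} genuinely regenerates all the $\Phi_i$ and $\theta_{ij}$ for $i\neq1$. This is exactly the content guaranteed by the formula $\Phi_j(b)=\theta_{1j}\circ\Phi_1(g_{1j}^{-1}bg_{1j})\circ\theta_{1j}^{-1}$ and by Lemma \ref{constructbundleindep}, so the argument goes through, but care is needed to confirm that the transitivity of the $G$-action on $S$ (so that every index is reachable from $1$) together with condition (C) forces the reconstructed action to agree on the nose with the original. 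Once this identification on $\hat S$ is pinned down and seen to be compatible with the gluing datum $\tau$ over $S^0$, a final invocation of Theorem \ref{formalgluing} upgrades the three local isomorphisms to a global isomorphism of $G$-bundles on $Y$, and naturality in $\E$ follows by the same gluing uniqueness applied to morphisms. Assembling these natural isomorphisms completes the proof that $\PV(X,\PP)$ and $\Vect_G(Y)$ are equivalent.
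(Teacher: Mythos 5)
Your proposal is correct and follows essentially the same route as the paper: both directions of the quasi-inverse are checked piecewise on $Y^0$, $\hat S$ and $S^0$ and then assembled by Theorem \ref{formalgluing}, with the $\hat S$ comparison being the crux. The only imprecision is your hope that the reconstructed $G$-action on $\hat S$ agrees with the original ``on the nose'': it does not, since the reconstruction uses the trivial transition maps $\1_{\V_p}\otimes\alpha_{1i}$ rather than the $\theta_{1i}$ induced by $\Phi$, and the paper instead exhibits the explicit intertwiner $\rho_i=(\1_{\V_p}\otimes\alpha_{1i})\circ\theta_{1i}^{-1}$ (which is exactly what Lemma \ref{constructbundleindep} produces) and checks its compatibility with the gluing data $\tau$ over $S^0$.
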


\begin{proof}
Starting from a $G$-bundle $\E$ on $Y$ we construct an algebraic parabolic bundle $(\V, \Psi, \mu)$ on $X$ with branch data $\PP$ as in Proposition \ref{orbitopara}. Let $\tilde{\E}$ be the $G$-bundle on $Y$, constructed as in Proposition \ref{paratoorbi}, from $(\V, \Psi, \mu)$. We need to show that $\E \cong \tilde{\E}$ as $G$-bundles. 

Let $S = \pi^{-1}(p)$. First we construct an isomorphism $\E \otimes_{\OO_{Y}} \OO_{\hat{S}} \arsim \tilde{\E} \otimes_{\OO_{Y}} \OO_{\hat{S}}$. Note that as $\prod_{i=1}^lR_i$ modules, both $\E \otimes_{\OO_{Y}} \OO_{\hat{S}}$ and $\tilde{\E} \otimes_{\OO_{Y}} \OO_{\hat{S}}$ are the same module viz. $\prod_{i=1}^l (\V_p \otimes_{\OO_{X, p}} R_i)$. Define $\rho_1 : \V_p \otimes_{\OO_{X, p}} R_1 \to \V_p \otimes_{\OO_{X, p}} R_1$ to be the identity map. By our construction in Proposition \ref{paratoorbi} $\Phi_1 = \Psi$, clearly $\rho_1$ is an isomorphism of $G_1$ bundles. Define $\rho_i = (Id_{\V_{p}} \otimes \alpha_{1i}) \circ \theta^{-1}_{1i}$ from $\V_p \otimes_{\OO_{X, p}} R_i \to \V_p \otimes_{\OO_{X, p}} R_i$ for $i > 1$. Then for $a \in G_i$
\begin{align*}
\rho_i \circ \Phi_i(a) & = (Id_{\V_{p}} \otimes \alpha_{1i}) \circ \theta^{-1}_{1i} \circ \theta_{1i} \circ \Phi_1(g_{1i}^{-1}ag_{1i}) \circ \theta^{-1}_{1i} \\
& = (Id_{\V_{p}} \otimes \alpha_{1i}) \circ \Psi(g_{1i}^{-1}ag_{1i}) \circ \theta^{-1}_{1i} \\
& = (Id_{\V_{p}} \otimes \alpha_{1i}) \circ \Psi(g_{1i}^{-1}ag_{1i}) \circ (Id_{\V_{p}} \otimes \alpha_{1i})^{-1} \circ (Id_{\V_{p}} \otimes \alpha_{1i}) \circ \theta^{-1}_{1i} \\
& = \tilde{\Phi}_i(a) \circ \rho_i 
\end{align*}
where $\tilde{\Phi}_i$ corresponds to the $G$ action on $\tilde{\E}$ restricted to $G_i$ as constructed in Proposition \ref{paratoorbi}. Hence $\rho_i$ is $G_i$-equivariant. Define $\rho = \prod_{i=1}^l \rho_i : \prod_{i=1}^l (\V_p \otimes_{\OO_{X, p}} R_i) \to \prod_{i=1}^l (\V_p \otimes_{\OO_{X, p}} R_i)$. It can be easily checked that $\rho$ is an isomorphism of $G$-bundles. After changing the base to $S^0$ we would denote this isomorphism by $\rho^0$. Note that as per our construction, on $Y^{0} = Y - S$ we have canonical isomorphism of $G$ bundles $\E|Y^{0} \cong \pi^{*}(\V|X^{0}) = \tilde{\E}|Y^{0}$ where $X^{0} = X - \{p\}$. Now one can easily see that after base change to $S^0$, this isomorphism is nothing but $\rho^0$ (recall how $G$ acts on $\pi^{*}(\V|X^{0})$) .

Now the gluing data is given by the isomorphisms $\tau = \prod_{i=1}^l \tau_i, \tilde{\tau} = \prod_{i=1}^l \tilde{\tau}_i$ corresponding to $\E, \tilde{\E}$ respectively. By our construction $\tau_i = \theta_{1i} \circ \tau_1 \circ \theta^{-1}_{1i}$, $\tilde{\tau}_i = (Id_{\V_{p}} \otimes \alpha_{1i}) \circ \mu \circ (Id_{\V_{p}} \otimes \alpha_{1i})^{-1}$ and $\tau_1 = \mu$. It is now trivial to check that $\tilde{\tau}_i \circ \rho_i^0 = \rho_i^0 \circ \tau_i$ and hence $\tilde{\tau} \circ \rho^0 = \rho^0 \circ \tau$. Thus we have the necessary isomorphism $\E \arsim \tilde{\E}$.

It is obvious that if we start with an algebraic parabolic bundle on $X$ with branch data $\PP$ then construct the associated $G$ bundle on $Y$ (as in Proposition \ref{paratoorbi}) and from that construct the associated algebraic parabolic bundle on $X$ (as in Proposition \ref{orbitopara}) we get the bundle we started with.
\end{proof}

\subsection{Generalization}

We now generalize to the situation when the branch locus  and $\Supp(\PP)$ contains more than one point.

\begin{theorem} \label{genequival}
Let $X, Y$ be smooth projective algebraic curves over an algebraically closed field $k$. Let $S = \{p_1, \ldots, p_N \}$ be a set of finitely many points in $X$ and  let $\PP$ be a branch data on $X$ with $\Supp(\PP) = S$. Let $\pi : Y \to X$ be a morphism which makes $Y$ into a $G$-Galois cover of $X$ such that
\begin{enumerate}
\item[(i)] $\pi$ is ramified precisely at $S$ with $\pi^{-1}(p_i) = \{y_{i1}, \ldots, y_{i{l_i}} \}$;
\item[(ii)] let $\K_{ij}$ be the quotient field of $R_{ij} := \hat{\OO}_{Y, y_{ij}}$, then the extension $\K_{ij}/\K_{X, p_i}$ is isomorphic to the extension $\PP(p_i)/\K_{X, p_i}$ for $1 \leq j \leq l_i$ and $1 \leq i \leq N$.
\end{enumerate}
Then the category $\PV(X, \PP)$ is equivalent to the category $\Vect_G(Y)$. 
\end{theorem}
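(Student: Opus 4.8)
The plan is to reduce the statement to the single-point Theorem \ref{equival} by treating each branch point separately, since every construction involved is local along the fibres $\pi^{-1}(p_i)$. Concretely, I would define the two functors between $\PV(X,\PP)$ and $\Vect_G(Y)$ and exhibit natural isomorphisms showing they are quasi-inverse, verifying all identities one $G$-orbit at a time.

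First I would construct the functor $\Vect_G(Y) \to \PV(X, \PP)$. Given a $G$-bundle $\E$ on $Y$, set $\V = (\pi_*\E)^G$; this is a vector bundle on $X$ whose formation does not see any individual branch point. For each $1 \le i \le N$ the fibre $\pi^{-1}(p_i) = \{y_{i1}, \ldots, y_{il_i}\}$ is a single $G$-orbit with inertia groups $G_{ij}$ (pairwise conjugate in $G$), and assumption (ii) guarantees $\Gal(\K_{ij}/\K_{X,p_i}) \cong \Gal(\PP(p_i)/\K_{X,p_i})$. Applying the local analysis preceding Proposition \ref{orbitopara} to the completion $\hat{S}_i$ of $Y$ along this one fibre yields a homomorphism $\Psi_{p_i} = \Phi_{i1}$ and an isomorphism $\mu_{p_i} = \tau_{i1}$, giving the parabolic datum at $p_i$. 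Collecting these over all $i$ produces $(\V, \{\Psi_{p_i}\}, \{\mu_{p_i}\}) \in \PV(X,\PP)$; morphisms are sent to $(\pi_* f)^G$ together with the induced local maps $\hat{f}_{i1}$, exactly as in the single-point discussion.

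For the reverse functor $\PV(X, \PP) \to \Vect_G(Y)$, take a parabolic bundle $(\V, \{\Psi_{p_i}\}, \{\mu_{p_i}\})$. At each $p_i$ the construction before Proposition \ref{paratoorbi} (via Lemma \ref{constructbundle}, with independence of the choice of connecting elements guaranteed by Lemma \ref{constructbundleindep}) builds a $G$-bundle on the $G$-invariant formal neighbourhood $\hat{S}_i$. On $Y^0 = Y \setminus S$, where $\pi$ is étale, we have the pulled-back $G$-bundle $\pi^*(\V|X^0)$, and the isomorphisms $\mu_{p_i}$ assemble into a gluing datum over $S^0 = \hat{S} \times_Y Y^0$. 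Formal patching then produces a $G$-bundle $\E$ on $Y$. To see the two functors are quasi-inverse, I would note that the isomorphism $\E \arsim \tilde{\E}$ of Theorem \ref{equival} was built by constructing it separately on $\hat{S}$ and on $Y^0$ and then gluing; here the very same $G_{ij}$-equivariant maps $\rho_{ij}$ are defined on each $\hat{S}_i$ (identical to the single-point computation) and agree with the canonical identification on $Y^0$, so compatibility over $S^0$ again lets formal patching globalize them. The opposite composite returns the original parabolic bundle by construction, as recorded in Proposition \ref{paratoorbi}.

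The main obstacle is that Theorem \ref{formalgluing} is stated for a set on which $G$ acts \emph{transitively}, whereas the relevant set $S = \coprod_{i=1}^N \pi^{-1}(p_i)$ is $G$-invariant but splits into $N$ orbits. I expect the cleanest resolution to be the observation that the proof of Theorem \ref{formalgluing} uses only the $G$-invariance of $S$: the underlying Harbater patching result holds for an arbitrary finite set of closed points, and the faithfulness, fullness and essential-surjectivity arguments added to make it $G$-equivariant never invoke transitivity. Since $\hat{S} = \coprod_i \hat{S}_i$ and $S^0 = \coprod_i S^0_i$ decompose $G$-equivariantly along the orbits, the patching and all subsequent identity checks factor as products over $i$, each factor being precisely the single-point case. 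Thus no genuinely new computation is required beyond reindexing, and the equivalence of categories follows.
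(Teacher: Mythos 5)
Your proposal is correct, but it follows the ``direct'' route that the paper deliberately avoids: the paper's own proof is inductive, setting $X_0 = X - S$, $X_j = X_{j-1}\cup\{p_j\}$, and invoking the one-point Theorem \ref{equival} (really, its patching construction) $N$ times to build the $G$-bundle successively on $\pi^{-1}(X_1),\ldots,\pi^{-1}(X_N)=Y$; a remark immediately after states that one \emph{could} instead work with all branch points simultaneously, which is exactly what you do. The trade-off is as follows. Your approach has to confront the fact that Theorem \ref{formalgluing} is stated only for a single $G$-orbit, and your resolution is the right one: Harbater's underlying patching result holds for an arbitrary finite set of closed points, and the added equivariance arguments (faithfulness, fullness, essential surjectivity) use only the $G$-invariance of $S$, never transitivity, so the theorem extends verbatim and everything then factors as a product over the orbits (with Lemma \ref{constructbundle} applied orbit by orbit, since its connecting elements $g_{ij}$ presuppose a single orbit of components). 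What you buy is a single clean application of patching and no intermediate objects. The paper's induction buys simpler notation and no need to restate the patching theorem, but at the cost of a slight looseness: the intermediate spaces $\pi^{-1}(X_j)$ are open, non-projective curves, so ``repeated use of Theorem \ref{equival}'' must be read as repeated use of its proof, gluing formal data at one fibre onto a $G$-bundle already constructed on an open piece. Both arguments are sound; yours is arguably the more self-contained once the non-transitive patching statement is recorded.
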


\begin{proof} 
Let $X_0 = X - S, X_1 = X_0 \cup \{p_1\}, \ldots, X = X_N = X_{N-1} \cup \{p_N\}$. Given a parabolic bundle on $X$ with branch data $\PP$, by repeated use of Theorem \ref{equival} we successively construct $G$ bundles on $\pi^{-1}(X_1), \ldots, \pi^{-1}(X_{N-1})$ and finally on $\pi^{-1}(X_N) = Y$. The construction of a parabolic bundle out of a $G$ bundle on $Y$ is exactly same as before. Thus we have our equivalence.
\end{proof}

\begin{rmk}
One could modify the proof of Theorem \ref{equival} by working simultaneously with multiple branch points and obtain Theorem \ref{genequival} directly. This has been avoided just to simplify the notation.
\end{rmk}

\begin{rmk}
It is clear from our construction that under this equivalence the trivial $G$-bundle corresponds to the trivial parabolic bundle with branch data $\PP$.
\end{rmk}  

We recall the following definitions from \cite{KP}
\begin{defn}
Let $(X, \PP)$ and $(Y, \QQ)$ be formal orbifold curves. A morphism of formal orbifolds $f : (Y, \QQ) \to (X, \PP)$ is a finite cover $f : Y \to X$ such that for all $y \in Y$ the extension $\QQ(y)/\K_{X, f(y)}$ contains the extension $\PP(f(y))/\K_{X, f(y)}$.

A morphism of formal orbifolds $f : (Y, \QQ) \to (X, \PP)$ is called e\'tale at $y$ if $\QQ(y) = \PP(f(y))$ and is called e\'tale if it is e\'tale for all points $y \in Y$.

We say that a formal orbifold $(X, \PP)$ is geometric if there exists a connected e\'tale cover $(Y, O) \to (X, \PP)$ where $O$ is the trivial branch data on $Y$. In this situation $\PP$ is called a geometric branch data on $X$.

Let $f : (Y, \QQ) \to (X, \PP)$ be a morphism of formal orbifolds. It is called a $G$-Galois cover for a finite group $G$ if $f : Y \to X$ is a $G$-Galois cover, $\QQ(y)/\PP(f(y))$ is a Galois extension for all $y \in Y$ and for all $g \in G, y \in Y$ , the extension $\QQ(y)/\K_{X, f(y)}$ is isomorphic to $\QQ(gy)/\K_{X, f(y)}$. 
\end{defn}

\begin{rmk}
Let $\PP, \PP^{\prime}$ be two branch data on $X$. Then $Id_X$ defines a morphism of fromal orbifolds $(X, \PP^{\prime}) \to (X, \PP)$ iff $\PP^{\prime} \geq \PP$ and the morphism is e\'tale iff $\PP^{\prime} = \PP$.

Given any branch data $\PP$ on $X$ we can always find a branch data $\QQ$ such that $\PP \leq \QQ$ and $\QQ$ is a geometric branch data on $X$.
\end{rmk}

\subsection*{Notation}

In Theorem \ref{genequival} we assume that $\PP$ is a geometric branch data and that there is a $G$-Galois \'etale cover $(Y, O) \to (X, \PP)$. The equivalence of the categories $\PV(X, \PP)$ and $\Vect_G(Y)$ has been shown only under this assumption. In this situation let us denote the functor we have constructed from $\PV(X, \PP) \to \Vect_G(Y)$ by $\TT_{(X, \PP)}^Y$ and the one from $\Vect_G(Y) \to \PV(X, \PP)$ by $\SSS_{(X, \PP)}^Y$.

\begin{rmk}
Suppose we have a commutative diagram of formal orbifolds
\[
\xymatrix{
(Z, O) \ar[r]^g \ar[rd]_{f \circ g} & (Y, O) \ar[d]^f \\
& (X, \PP)
}
\]
where both $(Z, O) \xrightarrow{f \circ g} (X, \PP)$ and $(Y, O) \xrightarrow{f} (X, \PP)$ are Galois \'etale with Galois groups $\Gamma$ and $G$ respectively. Then obviously $(Z, O) \xrightarrow{g} (Y, O)$ is also Galois \'etale with Galois group $H$ such that $H$ is a normal subgroup of $\Gamma$ and $\Gamma/H \cong G$. 

Now we know the following fact : $g^*$ defines an equivalence of categories $\Vect_G(Y) \arsim \Vect_{\Gamma}(Z)$ (see \cite{KP}). An inverse to $g^*$ is given by the functor $g_*^H$ which takes $\W \mapsto (g_*\W)^H$ for $\W \in \Vect_{\Gamma}(Z)$. One can check that $g^* = \TT_{(Y, O)}^Z$, modulo the identification $\PV(Y, O) = \Vect(Y)$ and after restricting both the functors to $\Vect_G(Y)$. Similarly we also have $g_*^H = \SSS_{(Y, O)}^Z$ applied to $\Vect_{\Gamma}(Z)$. Moreover by our construction the following holds in this situation
\begin{align*}
\TT_{(X, \PP)}^Z = g^* \circ \TT_{(X, \PP)}^Y, g_*^H \circ \TT_{(X, \PP)}^Z = \TT_{(X, \PP)}^Y, \SSS_{(X, \PP)}^Z \circ g^* = \SSS_{(X, \PP)}^Y 
\end{align*}
or, equivalently
\begin{align*}
\TT_{(X, \PP)}^Z = \TT_{(Y, O)}^Z \circ \TT_{(X, \PP)}^Y, \SSS_{(Y, O)}^Z \circ \TT_{(X, \PP)}^Z = \TT_{(X, \PP)}^Y, \SSS_{(X, \PP)}^Z \circ \TT_{(Y, O)}^Z = \SSS_{(X, \PP)}^Y 
\end{align*}  
(here we have replaced canonical isomorphisms by equality).
\end{rmk}

\begin{cor}
Let $(X, \PP)$ be a geometric formal orbifold. Then the category of algebraic parabolic bundles on $X$ with branch data $\PP$ is equivalent to the category of "vector bundles" on $(X, \PP)$ as defined in \cite{KP}, Definition 3.5.
\end{cor}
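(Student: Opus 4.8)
The plan is to deduce the statement directly from Theorem \ref{genequival}, after producing a suitable $G$-Galois \'etale cover and matching $\Vect_G(Y)$ with the category of \cite{KP}. Since $(X, \PP)$ is geometric, by definition there is a connected \'etale cover $(Y_0, O) \to (X, \PP)$. First I would replace $Y_0$ by a Galois closure $Y$ of $Y_0 \to X$, so that $\pi : Y \to X$ becomes $G$-Galois with $G = \Gal(Y/X)$. Because each $\PP(p_i)/\K_{X, p_i}$ is by definition already a Galois extension, the local extensions over the branch points are unchanged upon passing to the Galois closure; hence $(Y, O) \to (X, \PP)$ remains \'etale and is a $G$-Galois cover of formal orbifolds in the sense recalled above.

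Next I would check that such a cover satisfies the hypotheses of Theorem \ref{genequival}. Write $\Supp(\PP) = \{p_1, \ldots, p_N\}$ and $\pi^{-1}(p_i) = \{y_{i1}, \ldots, y_{il_i}\}$. \'Etaleness of $(Y, O) \to (X, \PP)$ means precisely that $O(y_{ij}) = \PP(p_i)$ for all $i, j$, i.e. that the local extension $\K_{ij}/\K_{X, p_i}$ coincides with $\PP(p_i)/\K_{X, p_i}$; this is hypothesis (ii). Since $O$ is the trivial branch data, $\pi$ is unramified over $X - \Supp(\PP)$ and ramified exactly along $S$, which is hypothesis (i). Theorem \ref{genequival} then gives an equivalence $\PV(X, \PP) \simeq \Vect_G(Y)$ via the functors $\TT_{(X, \PP)}^Y$ and $\SSS_{(X, \PP)}^Y$.

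It remains to identify $\Vect_G(Y)$ with the category of ``vector bundles'' on $(X, \PP)$ of \cite{KP}, Definition 3.5. By the results recalled from \cite{KP}, the category of orbifold (equivalently, ``vector'') bundles on $(X, \PP)$ is equivalent to $\Vect_G(Y)$ for any $G$-Galois \'etale cover $(Y, O) \to (X, \PP)$, and is independent of the chosen cover: for a finer Galois \'etale cover $(Z, O) \xrightarrow{g} (Y, O) \to (X, \PP)$ with $\Gal(Z/X) = \Gamma$, the functor $g^*$ is an equivalence $\Vect_G(Y) \arsim \Vect_\Gamma(Z)$. Composing the equivalence of the previous paragraph with this identification gives the desired equivalence.

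I expect the main point requiring care to be not a computation but the bookkeeping of definitions, and in particular the independence of the chosen cover. This is exactly what the preceding remark supplies: the relations $\TT_{(X, \PP)}^Z = \TT_{(Y, O)}^Z \circ \TT_{(X, \PP)}^Y$ and the companion relations for $\SSS$ show that our equivalence is compatible with the transition functors $g^*, g_*^H$ used to glue the category of \cite{KP}, so the resulting equivalence does not depend on the Galois \'etale cover chosen. All remaining verifications are immediate from the material already in place.
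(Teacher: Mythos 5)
Your proposal is correct and follows essentially the same route as the paper: invoke the definition of a geometric formal orbifold to produce a $G$-Galois \'etale cover $(Y,O)\to(X,\PP)$, check that it satisfies hypotheses (i) and (ii) of Theorem \ref{genequival}, and then identify $\Vect_G(Y)$ with the ``vector bundles'' of \cite{KP}, which are defined precisely as $G$-bundles on such a cover. The only difference is that you spell out the passage to a Galois closure (using that each $\PP(p)$ is Galois over $\K_{X,p}$, so the local extensions are unchanged) and the independence of the chosen cover via the transition functors; the paper simply takes the Galois cover as part of the data from \cite{KP} and leaves these verifications implicit.
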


\begin{proof}
As per the definitions in \cite{KP}, a geometric formal orbifold $(X, \PP)$ comes with a $G$-Galois cover $Y \to X$ which satisfies the conditions stated in the Theorem above. Also, a "vector bundle" on $(X, \PP)$ was defined as a $G$-bundle on $Y$. Hence from Theorem \ref{genequival} the statement follows. 
\end{proof}

\section{Orbifold bundles vs parabolic bundles} \label{application}
Now we proceed to define the category of parabolic bundles on a smooth projective curve and show that it is equivalent to the category of "orbifold bundles" on $X$ as defined in \cite{KP}.
\begin{defn} \label{pequiv}
Let $\PP, \PP^{\prime}$ be two branch data on $X$ such that $\PP \leq \PP^{\prime}$. Let $(\V, \{\Psi_x\}_{x \in \Supp(\PP)}, \allowbreak \{\mu_x\}_{x \in \Supp(\PP)})$ and $(\V^{\prime}, \{\Psi^{\prime}_x\}_{x \in \Supp(\PP^{\prime})}, \{\mu^{\prime}_x\}_{x \in \Supp(\PP^{\prime})})$ be two algebraic parabolic bundles on $X$ with branch data $\PP$ and $\PP^{\prime}$ respectively. We say that these two algebraic parabolic bundles are equivalent if (i) $\V \cong \V^{\prime}$, (ii) $\forall x \in \Supp(\PP^{\prime}), g \in \Gal(\PP^{\prime}(x)/\K_{X, x})$ we have $\Psi^{\prime}_x(g)|(\V_x \otimes_{\OO_{X, x}} R_x) = \Psi_x(\bar{g})$ where $\bar{g}$ is the image of $g$ in $\Gal(\PP(x)/\K_{X, x})$ under the natural map and $\V_x$ is thought of as $\V^{\prime}_x$ via the given isomorphism, and (iii) $\forall x \in \Supp(\PP^{\prime})$ we have $\mu^{\prime}_x|(\V_x \otimes_{\OO_{X, x}} \PP(x)) = \mu_x$. It is denoted by the notation $(\V, \{\Psi_x\}_{x \in \Supp(\PP)}, \{\mu_x\}_{x \in \Supp(\PP)}) \sim (\V^{\prime}, \{\Psi^{\prime}_x\}_{x \in \Supp(\PP^{\prime})}, \{\mu^{\prime}_x\}_{x \in \Supp(\PP^{\prime})})$.    
\end{defn}

\begin{rmk}
For $x \in \Supp(\PP^{\prime})$ but $x \notin \Supp(\PP)$ we take $\Psi_x$ as the trivial map. Note that $\forall x \in X$ we have $R_x \subseteq R^{\prime}_x, \PP(x) \subseteq \PP^{\prime}_x$, hence $\V_x \otimes_{\OO_{X, x}} R_x \subseteq \V^{\prime}_x \otimes_{\OO_{X, x}} R^{\prime}_x, \V_x \otimes_{\OO_{X, x}} \PP(x) \subseteq \V^{\prime}_x \otimes_{\OO_{X, x}} \PP^{\prime}(x)$ (via the isomorphism $\V \cong \V^{\prime}$).
 
\end{rmk}

Now given branch data $\PP \leq \PP^{\prime}$ on $X$ and $(\V, \Psi, \mu) \in \PV(X, \PP)$ we would like to construct a parabolic bundle with branch data $\PP^{\prime}$ which is equivalent to the given one. We take the same underlying vector bundle for the new parabolic bundle i.e $\V$. For any $x \in X$, let $R_x, R^{\prime}_x$ denote the integral closure of $\hat{\OO}_{X, x}$ in $\PP(x), \PP^{\prime}_x$ respectively. Denote the natural action of $\Gal(\PP^{\prime}(x)/\K_{X, x})$ on $R^{\prime}_x$ by $\psi^{\prime}_x$ i.e. $\psi^{\prime}_x : \Gal(\PP^{\prime}(x)/\K_{X, x}) \to \Aut_{Ring}(R^{\prime}_x)$. Note that
\begin{align*}
\V_x \otimes_{\OO_{X, x}} R^{\prime}_x \cong (\V_x \otimes_{\OO_{X, x}} R_x) \otimes_{R_x} R^{\prime}_x \\
\V_x \otimes_{\OO_{X, x}} \PP^{\prime}(x) \cong (\V_x \otimes_{\OO_{X, x}} \PP(x)) \otimes_{\PP(x)} \PP^{\prime}(x) 
\end{align*} 
We have the following natural group homomorphism $\Gal(\PP^{\prime}(x)/\K_{X, x}) \twoheadrightarrow \Gal(\PP(x)/\K_{X, x})$. Define $\Psi^{\prime}_x : \Gal(\PP^{\prime}(x)/\K_{X, x}) \to \Aut_{Ab}(\V_x \otimes_{\OO_{X, x}} R^{\prime}_x)$ by 
\begin{align*}
\Psi^{\prime}_x(g) (v \otimes s) = \Psi_x(\bar{g}) (v) \otimes_{R_x} \psi^{\prime}_x(g)(s)
\end{align*}
for every $g \in \Gal(\PP^{\prime}(x)/\K_{X, x}), v \in \V_x \otimes_{\OO_{X, x}} R_x, s \in R^{\prime}_x$ where $\bar{g}$ is the image of $g$ in $\Gal(\PP(x)/\K_{X, x})$. Similarly we define $\mu^{\prime}_x = \mu_x \otimes_{\PP(x)} Id_{\PP^{\prime}(x)}$ (via the isomorphism stated above).
\begin{lem}
$(\V, \{\Psi^{\prime}_{x}\}_{x \in \Supp(\PP^{\prime})}, \{\mu^{\prime}_{x}\}_{x \in \Supp(\PP^{\prime})})$ as defined above is an algebraic parabolic bundle on $X$ with branch data $\PP^{\prime}$. 
\end{lem}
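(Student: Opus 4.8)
The plan is to verify, point by point for each $x \in \Supp(\PP^{\prime})$, the two conditions (a) and (b) of the parabolic structure definition, together with the fact that $\Psi^{\prime}_x$ is a well-defined group homomorphism into $\Aut_{\Ab}(\V_x \otimes_{\OO_{X, x}} R^{\prime}_x)$; the underlying vector bundle is $\V$ itself, so nothing is required there. The single fact on which everything rests is that the surjection $\Gal(\PP^{\prime}(x)/\K_{X, x}) \twoheadrightarrow \Gal(\PP(x)/\K_{X, x})$, $g \mapsto \bar{g}$, is compatible with the Galois actions, i.e. $\psi^{\prime}_x(g)$ restricts to $\psi_x(\bar{g})$ on the subring $R_x \subseteq R^{\prime}_x$ (equivalently on the Galois subextension $\PP(x) \subseteq \PP^{\prime}(x)$). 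I would record this observation at the outset, since it is used repeatedly.

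First I would check that $\Psi^{\prime}_x(g)(v \otimes s) = \Psi_x(\bar{g})(v) \otimes \psi^{\prime}_x(g)(s)$ is well defined on $\V_x \otimes_{\OO_{X, x}} R^{\prime}_x \cong (\V_x \otimes_{\OO_{X, x}} R_x) \otimes_{R_x} R^{\prime}_x$. Moving a scalar $a \in R_x$ across the tensor sign on the source produces, after applying $\Psi^{\prime}_x(g)$, the factor $\psi_x(\bar{g})(a)$ on one side (using condition (a) for $\Psi_x$) and $\psi^{\prime}_x(g)(a)$ on the other; these agree precisely by the compatibility fact, so the map descends to the $R_x$-tensor product. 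The homomorphism property $\Psi^{\prime}_x(gh) = \Psi^{\prime}_x(g)\Psi^{\prime}_x(h)$ is then immediate from $\overline{gh} = \bar{g}\bar{h}$ together with the fact that $\Psi_x$ and $\psi^{\prime}_x$ are homomorphisms. Condition (a) for $\Psi^{\prime}_x$ is a one-line computation: for $r \in R^{\prime}_x$ and $y = v \otimes s$, writing $r \cdot y = v \otimes rs$ and using multiplicativity of $\psi^{\prime}_x(g)$ yields $\Psi^{\prime}_x(g)(r \cdot y) = \psi^{\prime}_x(g)(r) \cdot \Psi^{\prime}_x(g)(y)$.

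For condition (b) I would show that $\mu^{\prime}_x = \mu_x \otimes_{\PP(x)} \mathrm{Id}_{\PP^{\prime}(x)}$ is an isomorphism that intertwines $\mathrm{Id}_{\V_x} \otimes \psi^{\prime 0}_x$ on the source with $\Psi^{\prime 0}_x$ on the target. Being the base change of the isomorphism $\mu_x$, it is certainly an isomorphism. For the equivariance, the key observation is that, under the identification $\V_x \otimes_{\OO_{X, x}} \PP^{\prime}(x) \cong (\V_x \otimes_{\OO_{X, x}} \PP(x)) \otimes_{\PP(x)} \PP^{\prime}(x)$, both $I^{\prime}_x$-actions factor through the $\PP(x)$-level action and the Galois action on $\PP^{\prime}(x)$: the source action of $g$ becomes $(\mathrm{Id}_{\V_x} \otimes \psi^{0}_x(\bar{g})) \otimes \psi^{\prime 0}_x(g)$ and the target action becomes $\Psi^{0}_x(\bar{g}) \otimes \psi^{\prime 0}_x(g)$, again using that $\psi^{\prime 0}_x(g)$ preserves $\PP(x)$ and restricts to $\psi^{0}_x(\bar{g})$ there. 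Since $\mu^{\prime}_x = \mu_x \otimes \mathrm{Id}$ acts only on the first factor, the required intertwining relation for $\mu^{\prime}_x$ reduces, factor by factor, to the known relation $\mu_x \circ (\mathrm{Id}_{\V_x} \otimes \psi^{0}_x(\bar{g})) = \Psi^{0}_x(\bar{g}) \circ \mu_x$ for $\mu_x$.

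The only genuinely delicate point — and where I would spend the most care — is the bookkeeping for condition (b): correctly decomposing each of the two $I^{\prime}_x$-actions on $\V_x \otimes_{\OO_{X, x}} \PP^{\prime}(x)$ after transporting them through the tensor-product identification, so that the equivariance of $\mu^{\prime}_x$ really does collapse onto the equivariance of $\mu_x$. Everything else (well-definedness, the homomorphism property, and condition (a)) is a routine verification resting on the single compatibility statement $\psi^{\prime}_x(g)|_{R_x} = \psi_x(\bar{g})$.
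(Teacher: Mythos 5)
Your proposal is correct and follows essentially the same route as the paper: the paper likewise disposes of condition (a) as immediate from the definition and verifies condition (b) by an element-wise computation on $(v\otimes 1)\otimes s$ that amounts exactly to your factorization of both $I'_x$-actions as (the $\PP(x)$-level action) tensored with $\psi'^{0}_x(g)$, reducing equivariance of $\mu'_x$ to that of $\mu_x$. The only difference is that you make explicit the well-definedness of $\Psi'_x(g)$ on the $R_x$-tensor product via $\psi'_x(g)|_{R_x}=\psi_x(\bar g)$, which the paper leaves implicit.
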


\begin{proof}
The linearity condition for $\Psi^{\prime}_x$ follows immediately from the definition. For the patching condition we need to prove that for any $g \in \Gal(\PP^{\prime}(x)/\K_{X, x})$
\begin{align*}
\mu^{\prime}_x \circ (Id_{V_x} \otimes \psi^{\prime 0}_x(g)) = \Psi^{\prime 0}_x(g) \circ \mu^{\prime}_x.
\end{align*}
An arbitrary element of $\V_x \otimes_{\OO_{X, x}} \PP^{\prime}(x)$ may be written as finite sum of elements of the form $(v \otimes 1) \otimes s$ for some $v \in \V_x, s \in R^{\prime}_x, 1 \in \PP(x)$. Then we have
\begin{align*}
\mu^{\prime}_x \circ (Id_{V_x} \otimes \psi^{\prime 0}_x(g))((v \otimes 1) \otimes s) = \mu_x(v \otimes 1) \otimes \psi^{\prime 0}_x(s) = \mu_x \circ (Id_{V_x} \otimes \psi^{0}_x(\bar{g}))(v \otimes 1) \otimes \psi^{\prime 0}_x(s) \\
= \Psi^{0}_x(\bar{g})(\mu_x(v \otimes 1)) \otimes \psi^{\prime 0}_x(s) = \Psi^{\prime 0}_x(g)(\mu_x(v \otimes 1) \otimes s) = \Psi^{\prime 0}_x(g) \circ \mu^{\prime}_x ((v \otimes 1) \otimes s).
\end{align*}
Hence we have the required patching condition and the statement is true.
\end{proof}
We denote this bundle by $\imath^*(\V, \{\Psi_x\}_{x \in \Supp(\PP)}, \allowbreak \{\mu_x\}_{x \in \Supp(\PP)})$ where $\imath : (X, \PP^{\prime}) \to (X, \PP)$ is the map induced by $Id_X$.

\begin{prop} \label{pullbackequiv}
Let $\PP, \PP^{\prime}$ be two branch data on $X$ such that $\PP \leq \PP^{\prime}$. Let $(\V, \{\Psi_x\}_{x \in \Supp(\PP)}, \allowbreak \{\mu_x\}_{x \in \Supp(\PP)})$ and $(\V^{\prime}, \{\Psi^{\prime}_x\}_{x \in \Supp(\PP^{\prime})}, \{\mu^{\prime}_x\}_{x \in \Supp(\PP^{\prime})})$ be two algebraic parabolic bundles on $X$ with branch data $\PP$ and $\PP^{\prime}$ respectively. Let $\imath : (X, \PP^{\prime}) \to (X, \PP)$ be the morphism of formal orbifolds induced by $Id_X$. Then $(\V, \{\Psi_x\}_{x \in \Supp(\PP)}, \allowbreak \{\mu_x\}_{x \in \Supp(\PP)}) \sim (\V^{\prime}, \{\Psi^{\prime}_x\}_{x \in \Supp(\PP^{\prime})}, \allowbreak \{\mu^{\prime}_x\}_{x \in \Supp(\PP^{\prime})})$ iff $\imath^*(\V, \{\Psi_x\}_{x \in \Supp(\PP)}, \allowbreak \{\mu_x\}_{x \in \Supp(\PP)}) \cong (\V^{\prime}, \{\Psi^{\prime}_x\}_{x \in \Supp(\PP^{\prime})}, \{\mu^{\prime}_x\}_{x \in \Supp(\PP^{\prime})})$.
\end{prop}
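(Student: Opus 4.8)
The plan is to prove the two implications by matching Definition \ref{pequiv} against the explicit description of $\imath^*$ given above. Write $\imath^*(\V,\{\Psi_x\},\{\mu_x\}) = (\V,\{\widetilde\Psi_x\},\{\widetilde\mu_x\})$, so that $\widetilde\Psi_x(h)(v\otimes s) = \Psi_x(\bar h)(v)\otimes\psi'_x(h)(s)$ and $\widetilde\mu_x = \mu_x\otimes_{\PP(x)}Id_{\PP'(x)}$. Two features of this construction drive the argument. First, restricting $\widetilde\Psi_x$ to $\V_x\otimes_{\OO_{X,x}}R_x$ recovers $h\mapsto\Psi_x(\bar h)$, and restricting $\widetilde\mu_x$ to $\V_x\otimes_{\OO_{X,x}}\PP(x)$ recovers $\mu_x$. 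Second, this sub-datum is intrinsic: for $h$ in $N:=\Gal(\PP'(x)/\PP(x))$ we have $\bar h = e$, so $\widetilde\Psi_x(h) = Id_{\V_x}\otimes\psi'_x(h)$, and since $(R'_x)^N = R_x$ the $N$-invariants of $\V_x\otimes_{\OO_{X,x}}R'_x$ are exactly $\V_x\otimes_{\OO_{X,x}}R_x$.

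For the forward implication I would assume $\sim$ holds via a bundle isomorphism $g:\V\arsim\V'$ satisfying (ii) and (iii), set $\sigma_x := g_x\otimes Id_{R'_x}$, and verify that $(g,\{\sigma_x\})$ is an isomorphism $\imath^*(\V,\Psi,\mu)\arsim(\V',\Psi',\mu')$ in $\PV(X,\PP')$. The $I'_x:=\Gal(\PP'(x)/\K_{X,x})$-equivariance $\sigma_x\circ\widetilde\Psi_x(h) = \Psi'_x(h)\circ\sigma_x$ is an identity between two $\psi'_x(h)$-semilinear maps, so it suffices to check it on $\V_x\otimes1\subseteq\V_x\otimes R_x$, where it is exactly condition (ii). The defining $\mu$-square has all four arrows $\PP'(x)$-linear, so it suffices to check commutativity on $\V_x\otimes1$, where it becomes condition (iii). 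Both are short semilinearity computations.

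For the converse I would start from an isomorphism $(g,\{\sigma_x\})$ of $\imath^*(\V,\Psi,\mu)$ with $(\V',\Psi',\mu')$; then (i) is immediate, and I would recover (ii) and (iii) for the same $g$ by restricting this isomorphism to the intrinsic $N$-invariant sub-data. The commutativity of the $\mu$-square forces $\sigma^0_x = \mu'_x\circ(g_x\otimes Id)\circ\widetilde\mu_x^{-1}$, while $I'_x$-equivariance makes $\sigma_x$ carry the $N$-invariants of $\imath^*(\V,\Psi,\mu)$, namely $\V_x\otimes R_x$ with its untwisted action, onto those of $(\V',\Psi',\mu')$. The main obstacle lies precisely here: an abstract $\sigma_x$ need not equal $g_x\otimes Id$, so one must prove that it nevertheless restricts to $g_x\otimes Id$ on $\V_x\otimes R_x$, equivalently that $\sigma^0_x|_{\V_x\otimes\PP(x)} = g_x\otimes Id_{\PP(x)}$; this is exactly what yields (ii) and (iii). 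I expect to resolve it by feeding condition (b) of the parabolic structures into the displayed formula for $\sigma^0_x$: since $\mu_x$ and $\mu'_x$ conjugate the untwisted semilinear action into $\Psi_x$ and $\Psi'_x$ respectively, the formula together with $N$-equivariance pins $\sigma^0_x$ down on the $N$-invariants and forces $\mu'_x$ to restrict to $\mu_x$ after transport by $g$. Reading off the restricted equivariance and the restricted $\mu$-square then gives (ii) and (iii), completing the converse.
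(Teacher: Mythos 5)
Your direction ``$\sim$ implies $\imath^*(\V,\Psi,\mu)\cong(\V',\Psi',\mu')$'' is correct and is in substance the paper's own argument (its ``only if'' part): both reduce the equivariance identity and the $\mu$-square to $\V_x\otimes 1$ by semilinearity over $R'_x$, resp.\ $\PP'(x)$-linearity, where conditions (ii) and (iii) of Definition~\ref{pequiv} apply verbatim.

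The other direction is where the problem lies. You have correctly isolated the crux --- an abstract isomorphism $(g,\{\sigma_x\})$ need not have $\sigma_x=g_x\otimes Id_{R'_x}$ --- but the repair you propose, namely that $\sigma_x$ must nevertheless restrict to $g_x\otimes Id$ on $\V_x\otimes_{\OO_{X,x}}R_x$, is false, so no amount of feeding in condition (b) will establish it. Concretely, let $u_x$ be an $R'_x$-linear automorphism of $\V_x\otimes_{\OO_{X,x}}R'_x$ not defined over $R_x$ (in rank one, multiplication by the unit $1+t'$ with $t'$ a uniformizer of $R'_x$) and set $\V'=\V$, $\Psi'_x=u_x\circ\widetilde\Psi_x\circ u_x^{-1}$, $\mu'_x=u_x^0\circ\widetilde\mu_x$. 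Then $(Id_{\V},\{u_x\})$ is an isomorphism $\imath^*(\V,\Psi,\mu)\arsim(\V',\Psi',\mu')$ in $\PV(X,\PP')$, yet $\Psi'_x$ need not preserve $\V_x\otimes_{\OO_{X,x}}R_x$ at all and $\mu'_x|(\V_x\otimes_{\OO_{X,x}}\PP(x))\neq\mu_x$ for any choice of bundle isomorphism $g:\V\to\V'$, since $g_x$ is $\OO_{X,x}$-linear and so cannot absorb the twist by $u_x$. Your $N$-invariance argument does not detect this because the $N$-invariants of $(\V'_x\otimes R'_x,\Psi'_x|_N)$ are $\sigma_x(\V_x\otimes R_x)$, not $\V'_x\otimes R_x$: nothing forces $\Psi'_x|_N$ to be the untwisted action $Id\otimes\psi'_x$. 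The paper's proof of this direction sidesteps the issue entirely by reading the hypothesis as the literal identities $\Psi'_x(g)(v\otimes r')=\Psi_x(\bar g)(v)\otimes\psi'_x(g)(r')$ and $\mu'_x=\mu_x\otimes Id$ after identifying $\V$ with $\V'$ --- i.e.\ it takes the isomorphism to be the tautological one --- whereupon the conclusion is immediate by restriction. With $\cong$ read as isomorphism in the category $\PV(X,\PP')$ and Definition~\ref{pequiv} read as the on-the-nose equalities it states, the ``if'' direction is simply not provable; the honest fixes are either to interpret the isomorphism in the Proposition as this canonical one, or to coarsen $\sim$ so that (ii) and (iii) are required only up to such local automorphisms.
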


\begin{proof}
First we note that in both of the above situations we have $\V \cong \V^{\prime}$. Hence to simplify notation throughout this proof we would treat these two isomorphic bundles as the same bundle. 

For the 'if' part, by our hypothesis,  for any $x \in \Supp(\PP^{\prime}), g \in \Gal(\PP^{\prime}(x)/\K_{X, x})$ we have $\Psi^{\prime}_x(g)(v \otimes r^{\prime}) = \Psi_x(\bar{g}) \otimes_{R_x} \psi^{\prime}_x(g)(r^{\prime})$ where $v \in \V_x \otimes_{\OO_{X, x}} R_x, r^{\prime} \in R^{\prime}_x$ and $\bar{g}$ is the image of $g$ in $\Gal(\PP(x)/\K_{X, x})$. Then it immediately follows that $\Psi^{\prime}_x(g)|(\V_x \otimes_{\OO_{X, x}} R_x) = \Psi_x(\bar{g})$. We also have $\mu'_x = \mu_x \otimes_{\PP(x)} Id_{\PP'(x)} \Rightarrow \mu'_x|(\V_x \otimes_{\OO_{X, x}} \PP(x)) = \mu_x$. Hence the required equivalence holds.

For the 'only if' part, we have
\begin{align*}
\Psi^{\prime}_x(g)(v \otimes r^{\prime}) = \Psi^{\prime}_x(g)(r^{\prime} \cdot v \otimes 1) = \psi^{\prime}_x(g)(r^{\prime})\Psi^{\prime}_x(g)(v \otimes 1) \\ 
= \psi^{\prime}_x(g)(r^{\prime})(\Psi_x(\bar{g})(v) \otimes 1) = \Psi_x(\bar{g}) \otimes \psi^{\prime}_x(g)(r^{\prime})
\end{align*}
(note we are using that $\V_x \otimes_{\OO_{X, x}} R^{\prime}_x \cong (\V_x \otimes_{\OO_{X, x}} R_x) \otimes_{R_x} R^{\prime}_x$). Similarly 
\begin{align*}
\mu'_x(u \otimes \alpha') = \alpha'\mu'_x(u \otimes 1) = \alpha'(\mu_x(u) \otimes 1) = \mu_x(u) \otimes \alpha'
\end{align*}
where $u \in \V_x \otimes_{\OO_{X, x}} \PP(x), \alpha' \in \PP'(x)$. Clearly this proves that $\imath^*(\V, \{\Psi_x\}_{x \in \Supp(\PP)}, \allowbreak \{\mu_x\}_{x \in \Supp(\PP)}) \allowbreak \cong (\V, \{\Psi^{\prime}_x\}_{x \in \Supp(\PP^{\prime})}, \{\mu^{\prime}_x\}_{x \in \Supp(\PP^{\prime})})$.
\end{proof}

\begin{prop} \label{geomequiv}
Let $(X, \PP)$ and $(X, \PP^{\prime})$ be geometric formal orbifolds such that $\PP \leq \PP^{\prime}$. Let $(Y, 0)$ and $(Y^{\prime}, 0)$ be their respective connected Galois \'etale covers with Galois groups $G, G^{\prime}$ which fit into the following commutative diagram of fromal orbifolds
\[
\xymatrix{
(Y, 0) \ar[d]^{f} & (Y^{\prime}, 0) \ar[d]^{f^{\prime}} \ar[l]^{\tilde{\imath}} \\
(X, \PP) & (X, \PP^{\prime}) \ar[l]^{\imath}  
}
\]
where the map $\imath$ is induced by $Id_X$. Let $(\V, \{\Psi_x\}_{x \in \Supp(\PP)}, \{\mu_x\}_{x \in \Supp(\PP)})$ and $(\V^{\prime}, \{\Psi^{\prime}_x\}_{x \in \Supp(\PP^{\prime})}, \allowbreak \{\mu^{\prime}_x\}_{x \in \Supp(\PP^{\prime})})$ be two algebraic parabolic bundles on $X$ with branch data $\PP$ and $\PP^{\prime}$ respectively. Let $\E = \TT_{(X, \PP)}^Y(\V)$ and $\E^{\prime} = \TT_{(X, \PP^{\prime})}^{Y^{\prime}}(\V^{\prime})$. Then $(\V, \{\Psi_x\}_{x \in \Supp(\PP)}, \{\mu_x\}_{x \in \Supp(\PP)}) \sim (\V^{\prime}, \{\Psi^{\prime}_x\}_{x \in \Supp(\PP^{\prime})}, \allowbreak \{\mu^{\prime}_x\}_{x \in \Supp(\PP^{\prime})})$ iff $\E^{\prime} \cong \tilde{\imath}^*\E$ as $G^{\prime}$-bundles.
\end{prop}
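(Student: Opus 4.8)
The plan is to deduce the statement from Proposition \ref{pullbackequiv} together with the fact that $\TT_{(X, \PP')}^{Y'}$ is an equivalence (Theorem \ref{genequival}), once one knows that $\TT$ intertwines the two pullback operations $\imath^*$ (on parabolic bundles) and $\tilde\imath^*$ (on $G$-bundles). The key to the whole argument is the natural isomorphism
\[
\TT_{(X, \PP')}^{Y'} \circ \imath^* \;\cong\; \tilde\imath^* \circ \TT_{(X, \PP)}^{Y}
\colon \PV(X, \PP) \longrightarrow \Vect_{G'}(Y').
\]
Granting this, the proof is formal: by Proposition \ref{pullbackequiv}, $(\V, \{\Psi_x\}, \{\mu_x\}) \sim (\V', \{\Psi'_x\}, \{\mu'_x\})$ holds iff $\imath^*(\V, \{\Psi_x\}, \{\mu_x\}) \cong (\V', \{\Psi'_x\}, \{\mu'_x\})$ in $\PV(X, \PP')$; applying the equivalence $\TT_{(X, \PP')}^{Y'}$ this is in turn equivalent to $\TT_{(X, \PP')}^{Y'}(\imath^*\V) \cong \TT_{(X, \PP')}^{Y'}(\V') = \E'$; and by the displayed isomorphism the left-hand side is $\tilde\imath^*\TT_{(X, \PP)}^{Y}(\V) = \tilde\imath^*\E$. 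Hence $\sim$ is equivalent to $\E' \cong \tilde\imath^*\E$, as claimed.

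Before proving the square I would record the group theory hidden in the diagram. Since $f \circ \tilde\imath = f'$ with $f, f'$ connected Galois \'etale covers of $(X, \PP)$ and $(X, \PP')$ respectively, $\tilde\imath : Y' \to Y$ is a cover exhibiting $Y$ as an intermediate cover of the $G'$-Galois cover $Y' \to X$; thus $H := \Gal(Y'/Y)$ is normal in $G'$ with $G'/H \cong G$, the action of $G'$ on $Y'$ descends to the $G$-action on $Y = Y'/H$, and $\tilde\imath$ is $G'$-equivariant. In particular $\tilde\imath^*$ sends a $G$-bundle on $Y$ to a $G'$-bundle on $Y'$ (pull back the bundle and linearize through $G' \twoheadrightarrow G$), so $\tilde\imath^* : \Vect_G(Y) \to \Vect_{G'}(Y')$ is defined.

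To establish the square I would evaluate both functors on $(\V, \Psi, \mu)$ and compare the two $G'$-bundles on $Y'$ by means of the formal patching equivalence of Theorem \ref{formalgluing}, checking agreement on $(Y')^0 = Y' \setminus {f'}^{-1}(S)$, on the completion $\widehat{S'}$ along $S' = {f'}^{-1}(S)$, and on the gluing over $(S')^0$. Away from the ramification both bundles are canonically ${f'}^{*}(\V|_{X^0})$: on the one hand $\TT_{(X, \PP')}^{Y'}(\imath^*\V)|_{(Y')^0} = {f'}^{*}((\imath^*\V)|_{X^0}) = {f'}^{*}(\V|_{X^0})$ because $\imath^*$ does not change the underlying bundle, and on the other hand $\tilde\imath^*\E|_{(Y')^0} = \tilde\imath^* f^*(\V|_{X^0}) = (f \circ \tilde\imath)^*(\V|_{X^0}) = {f'}^{*}(\V|_{X^0})$, the $G'$-linearizations being the tautological one in both cases. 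Near a point $y'$ of $Y'$ lying over $y \in Y$ over $p \in S$ one has $\V_p \otimes_{\OO_{X, p}} R'_{y'} \cong (\V_p \otimes_{\OO_{X, p}} R_y) \otimes_{R_y} R'_{y'}$, the inertia $I'_{y'} = \Gal(\PP'(p)/\K_{X, p})$ surjects onto $I_y = \Gal(\PP(p)/\K_{X, p})$ compatibly with $G' \twoheadrightarrow G$, and the $G'$-linearization of $\tilde\imath^*\E$ on this fibre is $g \mapsto \Psi_p(\bar g) \otimes \psi'_p(g)$ --- which is exactly the action $\Psi'_p$ produced by the construction of $\imath^*$ preceding Proposition \ref{pullbackequiv}. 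Finally, since $\mu'_x = \mu_x \otimes_{\PP(x)} \mathrm{Id}_{\PP'(x)}$, the gluing isomorphism built from $\mu'$ agrees with the pullback of the one built from $\mu$; so the three pieces patch to the desired isomorphism of $G'$-bundles.

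The hard part will be this last comparison near the ramification: one must index the points $y'$ of $Y'$ over each $y_{ij}$ of $Y$ and over each $p_i \in S$, choose coset representatives and transport isomorphisms $\{g_{ij}\}, \{\theta_{ij}\}$ for $Y'$ compatibly with those for $Y$, and verify that the surjections on inertia groups are compatible with $G' \twoheadrightarrow G$ so that the pulled-back linearization matches $\Psi'$ term by term. Lemma \ref{constructbundleindep} guarantees that the resulting $G'$-bundle is independent of these choices, which disposes of the only genuine ambiguity; what remains is bookkeeping dictated, factor by factor, by the definition of $\imath^*$ and by the identity $\mu'_x = \mu_x \otimes \mathrm{Id}$.
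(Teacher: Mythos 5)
Your proposal is correct, but it is organized differently from the paper's proof, and the difference is worth noting. The paper proves the two directions of the iff head-on: for the forward direction it uses pushforward invariants ($\V' \cong (f'_*\E')^{G'} \cong (f_*\E)^G \cong \V$, then $(\tilde\imath_*\E')^H \cong \E$ to extract the conditions on $\Psi$ and $\mu$), and for the converse it computes the local data of $\SSS_{(X,\PP')}^{Y'}(\tilde\imath^*\E)$ and matches it against $\V'$; the intertwining square $\TT_{(X,\PP')}^{Y'}\circ\imath^* \cong \tilde\imath^*\circ\TT_{(X,\PP)}^{Y}$ is then extracted afterwards as Corollary \ref{pullbackg}. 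You invert this: you prove the square first, by direct comparison through Theorem \ref{formalgluing} on the three patches, and then both directions of the Proposition fall out formally from Proposition \ref{pullbackequiv} together with the fact that $\TT_{(X,\PP')}^{Y'}$ reflects isomorphisms (Theorem \ref{genequival}). There is no circularity, since your proof of the square uses only \ref{formalgluing}, \ref{constructbundleindep} and the explicit definition of $\imath^*$, none of which depend on \ref{geomequiv}. The core computation is the same in both treatments --- the $G'$-linearization of $\tilde\imath^*\E$ on the completed stalk at $y'$ is $g\mapsto\Psi_p(\bar g)\otimes\psi'_p(g)$ and the gluing is $\mu_p\otimes\mathrm{Id}$, which is precisely the definition of $\imath^*\V$ --- and your deferral of the indexing over the fibre, resolved by Lemma \ref{constructbundleindep}, is at the same level of detail as the paper's appeal to \cite[Lemma 3.3]{KP}. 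What your route buys is economy: the pushforward-invariants argument for the forward direction disappears entirely, and the Corollary becomes the Proposition's input rather than its output.
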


\begin{proof}
First we note that in the given situation $\tilde{\imath}$ is a $H$-Galois cover of curves where $H$ is a normal subgroup of $G^{\prime}$ such that $G^{\prime}/H \cong G$. Assume $\E^{\prime} \cong \tilde{\imath}^*\E$ as $G^{\prime}$-bundles. Then as vector bundles $\V^{\prime} \cong (f^{\prime}_*\E^{\prime})^{G^{\prime}} \cong (f_*\tilde{\imath}_*\tilde{\imath}^*\E)^{G^{\prime}} \cong (f_*\E)^G \cong \V$. Moreover $(\tilde{\imath}_*\E^{\prime})^H \cong \E$ as $G$-bundles. Hence for any $x \in \Supp(\PP^{\prime})$ we must have $(\V^{\prime}_x \otimes_{\OO_{X, x}} R^{\prime}_x)^{H_x} \cong (\V_x \otimes_{\OO_{X, x}} R_x)$ where $H_x = \Gal(\PP^{\prime}(x)/\PP(x))$. It follows that $\Psi^{\prime}_x(g)|(\V_x \otimes_{\OO_{X, x}} R_x) = \Psi_x(\bar{g})$ for any $g \in G^{\prime}_x$ and $\bar{g}$ is the image of $g$ in $G_x$ (note that $\V_x$ is thought of as $\V^{\prime}_x$ via the isomorphism above). Keeping in mind Theorem \ref{formalgluing}, $\E^{\prime} \cong \tilde{\imath}^*\E$ also gives us the condition on $\mu, \mu^{\prime}$. In other words $\V$ is equivalent to $\V^{\prime}$.

Conversely assume that $\V$ and $\V^{\prime}$ are equivalent. We need to show that $\E^{\prime} \cong \tilde{\imath}^*\E$ as $G^{\prime}$-bundles. Let $\tilde{\V}$ be the parabolic bundle on $X$ with branch data $\PP^{\prime}$ corresponding to the $G^{\prime}$-bundle $\tilde{\E} := \tilde{\imath}^*\E$ on $Y^{\prime}$ i.e. $\tilde{\V} = \SSS_{(X, \PP^{\prime})}^{Y^{\prime}}(\tilde\E)$. Then it suffices to prove that $\tilde{\V} \cong \V^{\prime}$ as parabolic bundles. 
 
Now as vector bundles $\tilde{\V} \cong (f^{\prime}_*\tilde{\imath}^*\E)^{G^{\prime}} \cong (f_*\E)^G \cong \V \cong \V^{\prime}$. For $x\in \Supp(\PP')$, let $y'\in Y'$ be a point lying above $x$ and $y\in Y$ be the image of $y'$ in $Y$. Note that $\PP'(x)=\K_{Y',y'}$ and $\PP(x)=\K_{Y,y}$. By the construction of parabolic bundles from $G$-bundles we have $\hat \E_y \cong \V_x\otimes_{\OO_{X,x}} \hat \OO_{Y,y}$ (see \ref{compidentity}). Also $\widehat{(\tilde{\imath}^*\E)}_{y'}\cong \hat\E_y\otimes_{\hat\OO_{Y,y}}\hat \OO_{Y',y'}\cong\V_x\otimes_{\OO_{X,x}} \hat \OO_{Y,y}\otimes_{\hat\OO_{Y,y}}\hat \OO_{Y',y'}\cong \hat \E'_{y'}$.

Note that $\tilde{\E}$ is a also $G'$-bundle on $Y'$ (\cite[Lemma 3.3]{KP}). Moreover the proof of this lemma shows that if $\Lambda$ defines the $G$-bundle structure of $\E$ on $Y$ then $q^*(\Lambda)$ defines the $G'$-bundle structure of $\tilde\imath^*\E$ on $Y'$ where $q:G'\times Y' \to G\times Y$ is the induced map from $G'\to G$ and $Y'\to Y$. Translating this in terms of rings and modules imply that the action $\tilde{\Psi}_x(g)$ for $g\in G'_x$ on $\hat\E'_{y'}\cong \hat\E_y\otimes_{\hat\OO_{Y,y}}\hat\OO_{Y',y'}$ is given by $\Psi_x(\bar{g}) \otimes_{R_x} \psi^{\prime}_x(g)$ where $\psi^{\prime}_x$ denotes the action of $G'_x$ on $R'_x$. Thus $\tilde{\Psi}_x = \Psi'_x$. Similarly we get $\tilde{\mu}_x = \mu_x \otimes_{\PP(x)} Id_{\PP'(x)}$ and hence $\tilde{\mu}_x = \mu'_x$. This implies that $\tilde \V$ is isomorphic to $\V'$.  
\end{proof}

\begin{rmk}
The diagram in the above Proposition may be obtained as follows: let $(Y, O) \to (X, \PP)$ and $(\tilde{Y}, O) \to (X, \PP^{\prime})$ be connected Galois \'etale covers of the respective formal orbifolds. Let $Y^{\prime}$ be a connected component of the normalization of $Y \times_X \tilde{Y}$. Then one can check that $(Y^{\prime}, O) \to (X, \PP^{\prime})$ is a Galois \'etale cover and we have a commutative diagram of formal orbifolds as in the Proposition. 
\end{rmk}

\begin{cor} 
Let us be in the situation of Proposition \ref{geomequiv}. Then $(\V^{\prime}, \{\Psi^{\prime}_x\}_{x \in \Supp(\PP^{\prime})}, \{\mu^{\prime}_x\}_{x \in \Supp(\PP^{\prime})}) \allowbreak \cong \imath^*(\V, \{\Psi_x\}_{x \in \Supp(\PP)}, \{\mu_x\}_{x \in \Supp(\PP)})$ iff $\E^{\prime} \cong \tilde{\imath}^*\E$ as $G^{\prime}$-bundles. In other words, $(\V^{\prime}, \Psi^{\prime}, \mu^{\prime}) \cong \imath^*(\V, \Psi, \mu)$ iff $\TT_{(X, \PP^{\prime})}^{Y^{\prime}}(\V^{\prime}) \cong \tilde{\imath}^*\TT_{(X, \PP)}^Y(\V)$. 
\end{cor}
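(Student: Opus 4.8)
The plan is to recognize that this corollary is a direct consequence of the two immediately preceding results, with the equivalence relation $\sim$ of Definition \ref{pequiv} serving as the common bridge between them. First I would check that we are genuinely in the overlap of both hypotheses: the setup of Proposition \ref{geomequiv} assumes $\PP \leq \PP^{\prime}$ with $(\V, \Psi, \mu) \in \PV(X, \PP)$ and $(\V^{\prime}, \Psi^{\prime}, \mu^{\prime}) \in \PV(X, \PP^{\prime})$, and $\imath : (X, \PP^{\prime}) \to (X, \PP)$ is induced by $Id_X$; these are precisely the hypotheses under which Proposition \ref{pullbackequiv} applies and under which the functors $\TT$ are defined. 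So no new geometric input is needed, only the chaining of two biconditionals.

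The key steps, in order, are as follows. By Proposition \ref{pullbackequiv}, the isomorphism of parabolic bundles $(\V^{\prime}, \{\Psi^{\prime}_x\}_{x \in \Supp(\PP^{\prime})}, \{\mu^{\prime}_x\}_{x \in \Supp(\PP^{\prime})}) \cong \imath^*(\V, \{\Psi_x\}_{x \in \Supp(\PP)}, \{\mu_x\}_{x \in \Supp(\PP)})$ holds if and only if the two bundles are equivalent in the sense of Definition \ref{pequiv}, i.e. $(\V, \Psi, \mu) \sim (\V^{\prime}, \Psi^{\prime}, \mu^{\prime})$. Next, Proposition \ref{geomequiv} states that this same equivalence $(\V, \Psi, \mu) \sim (\V^{\prime}, \Psi^{\prime}, \mu^{\prime})$ holds if and only if $\E^{\prime} \cong \tilde{\imath}^*\E$ as $G^{\prime}$-bundles. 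Combining these two biconditionals by transitivity yields exactly the asserted equivalence between $(\V^{\prime}, \Psi^{\prime}, \mu^{\prime}) \cong \imath^*(\V, \Psi, \mu)$ and $\E^{\prime} \cong \tilde{\imath}^*\E$.

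Finally, to obtain the reformulation in the second sentence, I would simply recall that in the situation of Proposition \ref{geomequiv} one has by definition $\E = \TT_{(X, \PP)}^Y(\V)$ and $\E^{\prime} = \TT_{(X, \PP^{\prime})}^{Y^{\prime}}(\V^{\prime})$; substituting these into $\E^{\prime} \cong \tilde{\imath}^*\E$ gives $\TT_{(X, \PP^{\prime})}^{Y^{\prime}}(\V^{\prime}) \cong \tilde{\imath}^*\TT_{(X, \PP)}^Y(\V)$. Since the argument is a purely formal composition of two already-established equivalences, there is no serious obstacle; the only point demanding a moment's care is verifying that the same object $(\V, \Psi, \mu) \sim (\V^{\prime}, \Psi^{\prime}, \mu^{\prime})$ literally appears on the left-hand side of both propositions, so that the two "if and only if" statements may legitimately be concatenated.
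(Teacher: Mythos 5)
Your proposal is correct and is exactly the argument the paper intends: the corollary is stated without proof precisely because it follows by concatenating the biconditional of Proposition \ref{pullbackequiv} ($\sim$ iff $\imath^*\V \cong \V^{\prime}$) with that of Proposition \ref{geomequiv} ($\sim$ iff $\E^{\prime} \cong \tilde{\imath}^*\E$), both of which share the equivalence $(\V,\Psi,\mu) \sim (\V^{\prime},\Psi^{\prime},\mu^{\prime})$ as their common left-hand side. The final substitution $\E = \TT_{(X,\PP)}^{Y}(\V)$, $\E^{\prime} = \TT_{(X,\PP^{\prime})}^{Y^{\prime}}(\V^{\prime})$ to obtain the reformulated second sentence is likewise just the notation set up in Proposition \ref{geomequiv}.
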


\begin{cor} \label{pullbackg}
Let us be in the situation of Proposition \ref{geomequiv}. Then $\TT_{(X, \PP^{\prime})}^{Y^{\prime}} \circ \imath^*(\V) \cong \tilde{\imath}^* \circ \TT_{(X, \PP)}^Y(\V)$ for any $\V \in \PV(X, \PP)$. Equivalently, $\imath^* \circ \SSS_{(X, \PP)}^Y(\E) \cong \SSS_{(X, \PP^{\prime})}^{Y^{\prime}} \circ \tilde{\imath}^*(\E)$ for any $\E \in \Vect_G(Y)$.  
\end{cor}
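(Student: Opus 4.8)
The plan is to deduce both assertions directly from the preceding Corollary, so almost no new work is required; the substantive content has already been carried out in Proposition \ref{geomequiv}, which identifies the equivalence $\V \sim \V^{\prime}$ with the $G^{\prime}$-bundle isomorphism $\E^{\prime} \cong \tilde{\imath}^*\E$.

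First I would prove the statement in its $\TT$-form. Given $\V \in \PV(X, \PP)$, set $\V^{\prime} := \imath^*(\V) \in \PV(X, \PP^{\prime})$. Since $\V^{\prime}$ is by construction equal to $\imath^*(\V)$, the hypothesis $\V^{\prime} \cong \imath^*(\V)$ of the preceding Corollary holds trivially. Applying that Corollary (in the direction from a parabolic isomorphism to a $G^{\prime}$-bundle isomorphism) then yields
\[
\TT_{(X, \PP^{\prime})}^{Y^{\prime}}(\imath^*\V) = \TT_{(X, \PP^{\prime})}^{Y^{\prime}}(\V^{\prime}) \cong \tilde{\imath}^* \TT_{(X, \PP)}^Y(\V),
\]
which is exactly the first claimed isomorphism.

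To obtain the equivalent $\SSS$-form, I would use that $\TT_{(X, \PP^{\prime})}^{Y^{\prime}}$ and $\SSS_{(X, \PP^{\prime})}^{Y^{\prime}}$ are mutually inverse equivalences by Theorem \ref{genequival}, as are $\TT_{(X, \PP)}^Y$ and $\SSS_{(X, \PP)}^Y$. Given $\E \in \Vect_G(Y)$, put $\V := \SSS_{(X, \PP)}^Y(\E)$, so that $\TT_{(X, \PP)}^Y(\V) \cong \E$. Applying the first part to this $\V$ gives
\[
\TT_{(X, \PP^{\prime})}^{Y^{\prime}}\big(\imath^* \SSS_{(X, \PP)}^Y(\E)\big) \cong \tilde{\imath}^* \TT_{(X, \PP)}^Y\big(\SSS_{(X, \PP)}^Y(\E)\big) \cong \tilde{\imath}^*(\E) \cong \TT_{(X, \PP^{\prime})}^{Y^{\prime}}\big(\SSS_{(X, \PP^{\prime})}^{Y^{\prime}}(\tilde{\imath}^*\E)\big).
\]
Since $\TT_{(X, \PP^{\prime})}^{Y^{\prime}}$ is an equivalence it reflects isomorphisms, so $\imath^* \SSS_{(X, \PP)}^Y(\E) \cong \SSS_{(X, \PP^{\prime})}^{Y^{\prime}}(\tilde{\imath}^*\E)$, as required.

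Because everything reduces to a single application of the preceding Corollary together with the formal properties of the equivalences, there is no genuine obstacle here. The only points requiring minor care are keeping track of which branch data and which cover each functor is attached to, and observing that $\imath^*(\V)$ is literally the bundle fed into the preceding Corollary (so that no separate isomorphism argument is needed in the first step).
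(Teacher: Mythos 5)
Your proof is correct and follows exactly the route the paper intends: the corollary is stated without proof as an immediate consequence of the preceding corollary (itself a combination of Propositions \ref{pullbackequiv} and \ref{geomequiv}), and you simply instantiate it with $\V' = \imath^*(\V)$ and then transport the statement through the mutually inverse equivalences $\TT$ and $\SSS$. The only thing you add beyond what the paper leaves implicit is the explicit verification of the $\SSS$-form via the fact that an equivalence reflects isomorphisms, which is a correct and welcome piece of bookkeeping.
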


\begin{prop} \label{functpullback}
Let $\imath : (X, \PP^{\prime}) \to (X, \PP)$ be a morphism of formal orbifolds. Then the operation $\imath^*$ defines a functor $\imath^* : \PV(X, \PP) \to \PV(X, \PP^{\prime})$. Moreover, given another morphism of formal orbifolds $\jmath : (X, \PP^{''}) \to (X, \PP^{\prime})$ we have a natural isomorphism of functors $\jmath^*\imath^* \cong (\imath  \jmath)^* : \PV(X, \PP) \to \PV(X, \PP^{''})$.
\end{prop}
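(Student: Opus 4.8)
The plan is to treat the two assertions in turn: first that $\imath^*$ is a functor, and then that pullback is compatible with composition of morphisms of formal orbifolds. Throughout, recall that $\imath$ is induced by $Id_X$, so $\PP \leq \PP^{\prime}$ and $R_x \subseteq R^{\prime}_x$ at every $x$.

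The action of $\imath^*$ on objects is already supplied by the construction preceding Proposition \ref{pullbackequiv}; it remains to define $\imath^*$ on morphisms and check the functor axioms. Given a morphism $(g, \{\sigma_x\}) : (\V, \Psi, \mu) \to (\W, \Xi, \nu)$ in $\PV(X, \PP)$, I would set $\imath^*(g, \{\sigma_x\}) = (g, \{\sigma^{\prime}_x\})$, where under the identification $\V_x \otimes_{\OO_{X, x}} R^{\prime}_x \cong (\V_x \otimes_{\OO_{X, x}} R_x) \otimes_{R_x} R^{\prime}_x$ one puts $\sigma^{\prime}_x := \sigma_x \otimes_{R_x} Id_{R^{\prime}_x}$. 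Then $\sigma^{\prime}_x$ is manifestly $R^{\prime}_x$-linear, and the required $\Gal(\PP^{\prime}(x)/\K_{X, x})$-equivariance reduces, on elementary tensors, to the $\PP$-level identity $\sigma_x \circ \Psi_x(\bar{g}) = \Xi_x(\bar{g}) \circ \sigma_x$ for $\bar{g}$ the image of $g$, since $\Psi^{\prime}_x(g)$ acts as $\Psi_x(\bar{g})$ on the first factor and as $\psi^{\prime}_x(g)$ on the $R^{\prime}_x$-factor. Compatibility with the gluing data is obtained by tensoring the defining square of $(g, \sigma)$ with $Id_{\PP^{\prime}(x)}$, using $\mu^{\prime}_x = \mu_x \otimes_{\PP(x)} Id_{\PP^{\prime}(x)}$ and the analogous formula for $\nu$. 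Functoriality of $-\otimes_{R_x} R^{\prime}_x$ then shows that $\imath^*$ preserves identities and composition.

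For the compatibility $\jmath^*\imath^* \cong (\imath\jmath)^*$, note that $\jmath$ is likewise induced by $Id_X$, so we have a tower $\PP \leq \PP^{\prime} \leq \PP^{\prime\prime}$ with $R_x \subseteq R^{\prime}_x \subseteq R^{\prime\prime}_x$, and $\imath\jmath$ is again induced by $Id_X$. The heart of the argument is the canonical tensor-associativity isomorphism
\begin{align*}
\big((\V_x \otimes_{\OO_{X, x}} R_x) \otimes_{R_x} R^{\prime}_x\big) \otimes_{R^{\prime}_x} R^{\prime\prime}_x \xrightarrow{\ \sim\ } (\V_x \otimes_{\OO_{X, x}} R_x) \otimes_{R_x} R^{\prime\prime}_x,
\end{align*}
whose source is the module underlying $\jmath^*\imath^*(\V, \Psi, \mu)$ at $x$ and whose target is that underlying $(\imath\jmath)^*(\V, \Psi, \mu)$. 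To see this isomorphism is $\Gal(\PP^{\prime\prime}(x)/\K_{X, x})$-equivariant, I would use that the composite of the surjections $\Gal(\PP^{\prime\prime}(x)/\K_{X, x}) \twoheadrightarrow \Gal(\PP^{\prime}(x)/\K_{X, x}) \twoheadrightarrow \Gal(\PP(x)/\K_{X, x})$ equals the direct surjection to $\Gal(\PP(x)/\K_{X, x})$; hence for $g$ with image $\bar{g}$ in the bottom group, both actions send an elementary tensor to $\Psi_x(\bar{g})(v)$ tensored with the $\psi^{\prime\prime}_x(g)$-semilinear action on the $R^{\prime\prime}_x$-factor, using that $\psi^{\prime}_x$ is the restriction of $\psi^{\prime\prime}_x$. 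The parallel computation with $\PP(x) \subseteq \PP^{\prime}(x) \subseteq \PP^{\prime\prime}(x)$ and the iterated identities $\mu^{\prime}_x = \mu_x \otimes Id$, $\mu^{\prime\prime}_x = \mu_x \otimes Id$ shows the two gluing isomorphisms correspond.

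Finally, naturality in the parabolic bundle is automatic: the comparison map is the canonical associativity isomorphism, and it commutes with the morphisms $\sigma_x \otimes Id$ produced by $\imath^*$ and $\jmath^*$ by functoriality of $-\otimes_{R^{\prime}_x} R^{\prime\prime}_x$. I expect the main obstacle to be purely organizational rather than conceptual: keeping the three Galois groups and their surjections straight, and confirming that each of the semilinear actions $\Psi^{\prime}, \Psi^{\prime\prime}$ and the gluing data $\mu^{\prime}, \mu^{\prime\prime}$ match under the associativity isomorphism. No new idea beyond the explicit formulas for $\imath^*$ and the functoriality of tensor product along the tower is required.
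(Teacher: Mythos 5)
Your proposal is correct and follows essentially the same route as the paper: the paper likewise defines $\imath^*$ on morphisms by $\sigma^{\prime}_x = \sigma_x \otimes_{R_x} Id_{R^{\prime}_x}$ and declares the remaining verifications (equivariance, compatibility with the gluing data, and the comparison $\jmath^*\imath^* \cong (\imath\jmath)^*$ via associativity of the tensor product along $R_x \subseteq R^{\prime}_x \subseteq R^{\prime\prime}_x$) to be routine. Your write-up simply carries out those routine checks explicitly, which the paper leaves to the reader.
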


\begin{proof}
We need to define the functor on morphisms. Let $(\theta, \{\sigma_x\}_{x \in \Supp(\PP)})$ be a morphism in $\PV(X, \PP)$. Define $\imath^*(\theta, \{\sigma_x\}_{x \in \Supp(\PP)})$ to be the morphism $(\theta, \{\sigma^{\prime}_x\}_{x \in \Supp(\PP^{\prime})})$ where $\sigma^{\prime}_x = \sigma_x \otimes_{R_x} Id_{R^{\prime}_x}$ for any $x \in \Supp(\PP^{\prime})$. Then it is straightforward to check that $(\theta, \{\sigma^{\prime}_x\}_{x \in \Supp(\PP^{\prime})})$ defines a morphism between two objects of $\PV(X, \PP^{\prime})$ obtained by applying $\imath^*$ on two objects of $\PV(X, \PP)$. The rest of the properties are also routine to check.
\end{proof}

\begin{cor}
Let us be in the situation of Proposition \ref{geomequiv}. Then the functor $\imath^* : \PV(X, \PP) \to \PV(X, \PP^{\prime})$ is an embedding.
\end{cor}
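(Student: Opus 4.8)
The plan is to show that $\imath^*$ is faithful and injective on isomorphism classes of objects — and in fact strictly injective on objects — so that $\PV(X,\PP)$ is realized as a subcategory of $\PV(X,\PP^{\prime})$. The route I would take transports the question, via the equivalences of Theorem \ref{genequival}, to the equivariant pullback functor $\tilde\imath^*$; this is the argument that genuinely uses the geometric hypothesis of Proposition \ref{geomequiv}.

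First I would invoke Corollary \ref{pullbackg}, which supplies a natural isomorphism $\TT^{Y^{\prime}}_{(X,\PP^{\prime})}\circ\imath^*\cong\tilde\imath^*\circ\TT^Y_{(X,\PP)}$. Since $\TT^Y_{(X,\PP)}\colon\PV(X,\PP)\arsim\Vect_G(Y)$ and $\TT^{Y^{\prime}}_{(X,\PP^{\prime})}\colon\PV(X,\PP^{\prime})\arsim\Vect_{G^{\prime}}(Y^{\prime})$ are equivalences of categories (Theorem \ref{genequival}), pre- and post-composition with them preserves faithfulness and injectivity on isomorphism classes, and a natural isomorphism of functors preserves both properties as well. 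Hence $\imath^*$ is an embedding as soon as $\tilde\imath^*\colon\Vect_G(Y)\to\Vect_{G^{\prime}}(Y^{\prime})$ is, and the problem reduces to the equivariant pullback.

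For faithfulness of $\tilde\imath^*$, recall that $\tilde\imath\colon Y^{\prime}\to Y$ is a finite, surjective, generically separable map of connected smooth projective curves, hence dominant between integral schemes. A homomorphism of the locally free sheaves underlying two $G$-bundles on $Y$ is determined by its germ at the generic point, and on generic stalks $\tilde\imath^*$ is the injective operation of tensoring a linear map over the function field of $Y$ with the field extension induced by $\tilde\imath$; thus $\tilde\imath^*f=\tilde\imath^*g$ forces $f=g$. For injectivity on isomorphism classes I would use, exactly as in the proof of Proposition \ref{geomequiv}, that $\tilde\imath$ is an $H$-Galois cover with $H\trianglelefteq G^{\prime}$ and $G^{\prime}/H\cong G$. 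The projection formula gives $\tilde\imath_*\tilde\imath^*\E\cong\E\otimes_{\OO_Y}\tilde\imath_*\OO_{Y^{\prime}}$, and taking $H$-invariants together with $(\tilde\imath_*\OO_{Y^{\prime}})^H=\OO_Y$ yields a $G$-equivariant isomorphism $(\tilde\imath_*\tilde\imath^*\E)^H\cong\E$. Consequently $\tilde\imath^*\E_1\cong\tilde\imath^*\E_2$ as $G^{\prime}$-bundles implies $\E_1\cong\E_2$ as $G$-bundles, so $\tilde\imath^*$ is injective on isomorphism classes.

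The step requiring the most care — and the main obstacle — is the equivariance in this last point: one must verify that the residual $G^{\prime}/H\cong G$ action on $(\tilde\imath_*\tilde\imath^*\E)^H$ is precisely the original $G$-structure on $\E$, not merely that the underlying vector bundles agree. This bookkeeping is already carried out in the proof of Proposition \ref{geomequiv} (where $(\tilde\imath_*\E^{\prime})^H\cong\E$ as $G$-bundles is used), so I would cite that computation rather than repeat it. Finally, I would note for completeness that strict injectivity on objects and faithfulness can also be read off directly from the explicit description of $\imath^*$: by construction $\imath^*(\V,\Psi,\mu)=(\V,\Psi^{\prime},\mu^{\prime})$ has the same underlying bundle $\V$, with $\Psi_x$ and $\mu_x$ recovered by restricting $\Psi^{\prime}_x$ and $\mu^{\prime}_x$ along $\V_x\otimes_{\OO_{X,x}}R_x\hookrightarrow\V_x\otimes_{\OO_{X,x}}R^{\prime}_x$, while on morphisms (Proposition \ref{functpullback}) $\imath^*$ sends $\sigma_x$ to $\sigma_x\otimes_{R_x}\mathrm{Id}_{R^{\prime}_x}$, which is injective since $R^{\prime}_x$ is free over $R_x$.
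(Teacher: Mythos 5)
Your proposal is correct and follows essentially the same route as the paper: reduce via the equivalences of Theorem \ref{genequival} (through Corollary \ref{pullbackg} and Proposition \ref{geomequiv}) to the statement that $\tilde\imath^*\E_1\cong\tilde\imath^*\E_2$ implies $\E_1\cong\E_2$, which holds because $\tilde\imath$ is an $H$-Galois cover. The paper simply cites this last fact (it is the equivalence $\Vect_G(Y)\arsim\Vect_{G^{\prime}}(Y^{\prime})$ recorded earlier from \cite{KP}), whereas you additionally spell out the projection-formula/$H$-invariants argument and the faithfulness check; these are welcome but not a different method.
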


\begin{proof}
From what we have already proved we know that $(\V^{\prime}, \{\Psi^{\prime}_x\}_{x \in \Supp(\PP^{\prime})}, \{\mu^{\prime}_x\}_{x \in \Supp(\PP^{\prime})}) \allowbreak \cong \imath^*(\V, \{\Psi_x\}_{x \in \Supp(\PP)}, \{\mu_x\}_{x \in \Supp(\PP)})$ iff $\E^{\prime} \cong \tilde{\imath}^*\E$ as $G^{\prime}$-bundles. Then we would have $\imath^*(\V_1, \Psi_{1}, \mu_{1}) \cong \imath^*(\V_2, \Psi_{2}, \mu_{2})$ iff $\tilde{\imath}^*\E_1 \cong \tilde{\imath}^*\E_2$ as $G^{\prime}$-bundles. But by our choice $\tilde{\imath}$ is a $H$-Galois cover. Hence $\tilde{\imath}^*\E_1 \cong \tilde{\imath}^*\E_2 \Rightarrow \E_1 \cong \E_2$ and the statement is true.
\end{proof}

Now we generalize the definition of equivalence of two parabolic bundles even if their branch data are not comparable.
\begin{defn}
Let $\PP, \PP^{\prime}$ be two branch data on $X$. Let $(\V, \{\Psi_x\}_{x \in \Supp(\PP)}, \{\mu_x\}_{x \in \Supp(\PP)})$ and $(\V^{\prime}, \{\Psi^{\prime}_x\}_{x \in \Supp(\PP^{\prime})}, \{\mu^{\prime}_x\}_{x \in \Supp(\PP^{\prime})})$ be two algebraic parabolic bundles on $X$ with branch data $\PP$ and $\PP^{\prime}$ respectively. We say that these two algebraic parabolic bundles are equivalent if there exists a branch data $\QQ$ on $X$ such that $\PP \leq \QQ, \PP^{\prime} \leq \QQ$ and under the functors $\imath^* : \PV(X, \PP) \to \PV(X, \QQ)$ and $\imath^{\prime*} : \PV(X, \PP^{\prime}) \to \PV(X, \QQ)$ the respective images are isomorphic. We again use $\sim$ to denote equivalence of two parabolic bundles.  
\end{defn}

\begin{lem}
The relation $\sim$ between parabolic bundles with branch data defined on $X$ is an equivalence relation.
\end{lem}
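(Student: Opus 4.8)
The plan is to verify the three axioms of an equivalence relation, with reflexivity and symmetry being immediate from the shape of the definition and transitivity carrying all the content. Throughout I write a typical parabolic bundle with branch data $\PP_i$ as $(\V_i, \Psi_i, \mu_i)$, and I freely use the functoriality of the pullback operation established in Proposition \ref{functpullback}.

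For reflexivity, given $(\V, \Psi, \mu) \in \PV(X, \PP)$ one takes $\QQ = \PP$; the morphism $(X, \PP) \to (X, \PP)$ induced by $Id_X$ is the identity, and its pullback functor is naturally isomorphic to the identity functor by Proposition \ref{functpullback}, so the two required images coincide and are trivially isomorphic. Symmetry is built into the definition: the witnessing branch data $\QQ$ and the isomorphism of the two images in $\PV(X, \QQ)$ are already symmetric in the two bundles, so the very same $\QQ$ witnesses the reversed relation.

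The substance is transitivity. Suppose $(\V_1, \Psi_1, \mu_1) \sim (\V_2, \Psi_2, \mu_2)$ is witnessed by a branch data $\QQ_{12}$ with $\PP_1, \PP_2 \leq \QQ_{12}$, and $(\V_2, \Psi_2, \mu_2) \sim (\V_3, \Psi_3, \mu_3)$ is witnessed by $\QQ_{23}$ with $\PP_2, \PP_3 \leq \QQ_{23}$. I would define $\QQ$ by letting $\QQ(x)$ be the compositum of $\QQ_{12}(x)$ and $\QQ_{23}(x)$ inside the fixed algebraic closure of $\K_{X,x}$; this is again a finite Galois extension of $\K_{X,x}$, and $\Supp(\QQ) \subseteq \Supp(\QQ_{12}) \cup \Supp(\QQ_{23})$ is finite, so $\QQ$ is a legitimate branch data satisfying $\QQ_{12} \leq \QQ$ and $\QQ_{23} \leq \QQ$. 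Writing $u : (X, \QQ) \to (X, \QQ_{12})$ and $w : (X, \QQ) \to (X, \QQ_{23})$ for the morphisms induced by $Id_X$, I would apply $u^*$ to the first isomorphism and $w^*$ to the second, and then use the coherence isomorphism $\jmath^* \imath^* \cong (\imath \jmath)^*$ of Proposition \ref{functpullback} to rewrite each side as a single $Id_X$-induced pullback into $\PV(X, \QQ)$. This produces an isomorphism between the pullback of $\V_1$ and a pullback of $\V_2$, and another between a pullback of $\V_2$ and the pullback of $\V_3$.

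The key step, and the main obstacle, is to observe that the two ways of transporting the common bundle $\V_2$ up to $\PV(X, \QQ)$ actually agree. The composite $(X, \QQ) \to (X, \QQ_{12}) \to (X, \PP_2)$ and the composite $(X, \QQ) \to (X, \QQ_{23}) \to (X, \PP_2)$ are both morphisms of formal orbifolds induced by $Id_X$ with the same source and the same target $(X, \PP_2)$; since such an $Id_X$-induced morphism between two comparable branch data is unique, the two composites are equal, and hence by Proposition \ref{functpullback} their pullback functors are naturally isomorphic on $\V_2$. Chaining the three isomorphisms then yields an isomorphism in $\PV(X, \QQ)$ between the $Id_X$-induced images of $\V_1$ and of $\V_3$, which is exactly the data witnessing $(\V_1, \Psi_1, \mu_1) \sim (\V_3, \Psi_3, \mu_3)$ with $\QQ$ as the common refinement. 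The only care needed is the bookkeeping that all the intermediate morphisms are genuinely $Id_X$-induced so that the functoriality isomorphisms apply and the two routes for $\V_2$ can be identified.
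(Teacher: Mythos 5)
Your proof is correct and follows essentially the same route as the paper: reflexivity and symmetry are immediate, and transitivity is handled by passing to the compositum branch data and using that the two $Id_X$-induced composites down to $(X,\PP_2)$ coincide, so the two pullbacks of $\V_2$ into $\PV(X,\QQ)$ can be identified and the isomorphisms chained. The paper's proof is just a terser version of exactly this argument.
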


\begin{proof}
We only need to check transitivity. Suppose we are given branch data $\PP_1, \PP_2, \PP_3$ on $X$ and parabolic bundles $\V_i$ with branch data $\PP_i$ for $i = 1, 2, 3$. Assume that $\V_1 \sim \V_2$ and $\V_2 \sim \V_3$. Then by definition we have branch data $\QQ_i, i = 1, 2$, on X such that $f_1^*\V_1 \cong f_2^*\V_2$ and $g_2^*\V_2 \cong g_3^*\V_3$ where $f_i : (X, \QQ_1) \to (X, \PP_i),i= 1, 2$, and $g_j : (X, \QQ_2) \to (X, \PP_j), j = 2, 3$, are the natural maps between formal orbifolds. Let $\RR = \QQ_1\QQ_2$ be the compositum of these two branch data. We have maps $h_i : (X, \RR) \to (X, \QQ_i), i = 1, 2$, of formal orbifolds such that $f_2 \circ h_1 = g_2 \circ h_2$. Hence $h_1^*f_2^*\V_2 \cong h_2^*g_2^*\V_2 \Rightarrow h_1^*f_1^*\V_1 \cong h_2^*g_3^*\V_3$. Hence by our definition $\V_1 \sim \V_3$ and we are done. 
\end{proof}

\begin{defn}
A parabolic bundle on a smooth projective curve $X$ is an equivalence class under $\sim$ of algebraic parabolic bundles on $X$ with some branch data. We denote this category by $\PV(X)$.
\end{defn}

From our definition of equivalence it is clear that the trivial parabolic bundle on $X$ corresponds to the equivalence class of $\OO_X^{\oplus n}$ with trivial branch data $O$. The trivial parabolic bundle with branch data $\PP$ obviously belongs to this equivalence class.

\begin{prop}
The category $\PV(X)$ is equivalent to the category of "orbifold bundles" on $X$ as defined in \cite{KP}, Definition 3.8. 
\end{prop}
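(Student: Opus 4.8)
The plan is to realize the equivalence as the functor induced on (pseudo-)colimits by the family of equivalences $\TT_{(X,\PP)}^Y$ as $\PP$ ranges over the geometric branch data on $X$. Recall that in \cite{KP} an orbifold bundle on $X$ is a $G$-bundle on a connected Galois \'etale cover $(Y,O)\to(X,\PP)$ with $\PP$ geometric, two such being identified when they pull back to isomorphic $G$-bundles on a common cover; the cover-independence proven there says precisely that the category of orbifold bundles is the colimit of the categories $\Vect_G(Y)$ along the fully faithful pullback functors $\tilde\imath^*$. On the parabolic side, $\PV(X)$ is by definition the colimit of the categories $\PV(X,\PP)$ along the embeddings $\imath^*$ of Proposition \ref{functpullback}. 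Both systems are indexed by the poset of geometric branch data, which is directed: any two branch data are dominated by their compositum, and by the Remark following the definition of $G$-Galois covers of orbifolds every branch data is dominated by a geometric one, so cofinally we may restrict to geometric $\PP$.

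First I would define a functor $\Theta\colon\PV(X)\to\{\text{orbifold bundles on }X\}$. Given a class represented by $(\V,\Psi,\mu)\in\PV(X,\PP)$, after enlarging $\PP$ to a geometric $\QQ\geq\PP$ and replacing $\V$ by $\imath^*\V$ (which represents the same class by Proposition \ref{pullbackequiv}) we may assume $\PP$ is geometric with a chosen cover $(Y,O)\to(X,\PP)$, and we set $\Theta([\V])=[\TT_{(X,\PP)}^Y(\V)]$. The essential point is well-definedness. Independence of the cover follows from the compatibility $\TT_{(X,\PP)}^Z=\TT_{(Y,O)}^Z\circ\TT_{(X,\PP)}^Y$: given a second cover, take a common refinement $Z$ dominating both over $(X,\PP)$, so that $\TT_{(X,\PP)}^Z(\V)$ is the common pullback of the two candidate $G$-bundles and hence they represent the same orbifold bundle. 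Independence of the representative is Corollary \ref{pullbackg} (equivalently Proposition \ref{geomequiv}): if $(\V,\PP)\sim(\V',\PP')$, pass to a common geometric refinement $\QQ$ with maps $\imath_1\colon(X,\QQ)\to(X,\PP)$ and $\imath_2\colon(X,\QQ)\to(X,\PP')$, so that $\imath_1^*\V\cong\imath_2^*\V'$ in $\PV(X,\QQ)$; applying $\TT_{(X,\QQ)}^{Y_\QQ}$ and invoking Corollary \ref{pullbackg} yields $\tilde\imath_1^*\TT_{(X,\PP)}^Y(\V)\cong\tilde\imath_2^*\TT_{(X,\PP')}^{Y'}(\V')$, i.e. the two $G$-bundles become isomorphic on the common cover $Y_\QQ$ and so represent the same orbifold bundle. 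The action of $\Theta$ on morphisms is defined in the same manner, a morphism of classes being represented on a common geometric refinement and there pushed through the corresponding $\TT$; Proposition \ref{functpullback} and Corollary \ref{pullbackg} make this independent of the refinement.

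Next I would check that $\Theta$ is fully faithful and essentially surjective. Each $\TT_{(X,\PP)}^Y$ is an equivalence by Theorem \ref{genequival}, hence fully faithful. Since both colimits are formed along fully faithful functors ($\imath^*$ is an embedding by the corollary following Proposition \ref{functpullback}, and $\tilde\imath^*$ is fully faithful because $\tilde\imath$ is a Galois cover), and $\Theta$ is induced levelwise by the $\TT_{(X,\PP)}^Y$ compatibly with these refinement functors, fullness and faithfulness of $\Theta$ reduce to the corresponding statements for the $\TT_{(X,\PP)}^Y$: any morphism of orbifold bundles is realized on a single cover, to which one applies the quasi-inverse $\SSS_{(X,\PP)}^Y$ to produce the required parabolic morphism. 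For essential surjectivity, an arbitrary orbifold bundle is represented by some $\E\in\Vect_G(Y)$ for a geometric cover $(Y,O)\to(X,\PP)$, and then $\Theta\big([\SSS_{(X,\PP)}^Y(\E)]\big)=[\TT_{(X,\PP)}^Y\SSS_{(X,\PP)}^Y(\E)]=[\E]$.

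The main obstacle I anticipate is bookkeeping rather than a new idea: one must verify that the two ``pass to a common refinement'' procedures — the one defining equivalence and morphisms of parabolic bundles, and the one defining identifications and morphisms of orbifold bundles — are carried into each other by the functors $\TT$. This coherence is governed entirely by the naturality of Corollary \ref{pullbackg} together with the cocycle-type identities $\TT_{(X,\PP)}^Z=\TT_{(Y,O)}^Z\circ\TT_{(X,\PP)}^Y$, so the proof amounts to organizing these compatibilities and using that the indexing poset of geometric branch data is directed.
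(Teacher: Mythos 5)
Your proposal is correct and follows essentially the same route as the paper: the paper's own (much terser) proof also reduces to representing any class by a parabolic bundle with geometric branch data and invoking the levelwise equivalences $\PV(X,\PP)\simeq\Vect_G(Y)$ of Theorem \ref{genequival}, with the compatibilities of Proposition \ref{geomequiv} and Corollary \ref{pullbackg} doing the work of matching the two equivalence relations. Your write-up simply makes explicit the well-definedness and colimit bookkeeping that the paper leaves implicit.
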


\begin{proof}
As any branch data can be dominated by a geometric branch data, it is clear that an object in $\PV(X)$ can be represented by an algebraic parabolic bundle over $X$ with geometric branch data, say $\PP$. But we have already seen that $\PV(X, \PP)$ is equivalent to $\Vect_G(Y)$ where $(Y, O)$ is a connected Galois \'etale cover of $(X, \PP)$ with Galois group $G$. This implies the desired equivalence of categories. 
\end{proof}

\begin{prop}
Let characteristic of $k$ be zero. Then the category $\PV(X)$ is equivalent to the category of ``parabolic bundle with rational weights'' on $X$ according to the definition in \cite{MS}. 
\end{prop}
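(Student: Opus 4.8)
The plan is to recover the classical eigenvalue--eigenspace dictionary of Mehta--Seshadri, which becomes available precisely because over a field of characteristic zero every local Galois group $I_p=\Gal(\PP(p)/\K_{X,p})$ is cyclic. Recall that a parabolic bundle with rational weights in the sense of \cite{MS} is a vector bundle $\V$ on $X$ together with, at each point $p$ of a finite set, a decreasing filtration $\V_p=F_p^1\supsetneq\cdots\supsetneq F_p^{m}\supsetneq 0$ of the fibre and rational weights $0\le\alpha_p^1<\cdots<\alpha_p^{m}<1$. As a preliminary normalisation I would fix once and for all a compatible system of roots of unity $(\zeta_N)_{N\ge 1}$ in $k$ (an element of $\varprojlim_N\mu_N(k)$, the analogue of $\zeta_N=e^{2\pi i/N}$ over $\C$); this is exactly the choice needed to convert an eigenvalue, a priori a root of unity in $k$, into a rational number in $[0,1)$, and it simultaneously singles out a canonical generator of each cyclic group $I_p$.

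To define the functor $\PV(X)\to\{\text{M--S bundles}\}$ I would represent a class in $\PV(X)$ by an algebraic parabolic bundle $(\V,\{\Psi_p\},\{\mu_p\})$ with some branch data $\PP$ and treat each $p\in\Supp(\PP)$ separately. Since $I_p$ is cyclic of order $N_p$ and $R_p\cong k[[s]]$ with $s^{N_p}$ a uniformiser, the chosen system singles out the generator $g_p$ acting by $s\mapsto\zeta_{N_p}s$, and $\Psi_p$ is determined by the single $s$-semilinear automorphism $\Psi_p(g_p)$ of $\V_p\otimes_{\OO_{X,p}}R_p$; moreover $g_p$ acts trivially on the residue field $k$, so $\Psi_p(g_p)$ descends to a $k$-linear automorphism $\overline{\Psi}_p(g_p)$ of the fibre $\V_p$. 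Because $N_p$ is invertible in $k$ and $k\supseteq\mu_{N_p}$, this automorphism is semisimple with eigenvalues in $\mu_{N_p}$; I assign to the $\zeta_{N_p}^{-a}$-eigenspace the weight $a/N_p$ and read off the resulting decreasing filtration of $\V_p$ together with the weights $\{a_i/N_p\}$. This is the Mehta--Seshadri datum. Note that the isomorphism $\mu_p$ of condition (b) plays no role in extracting the filtration and weights; by cyclic Galois descent (Hilbert 90) such a $\mu_p$ always exists, and it will be seen to be determined up to an automorphism of the parabolic bundle.

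The inverse functor is the standard local model: given a filtered fibre with weights $a_i/N$ (clear denominators to a common $N$), build the cyclic extension $\K_p=\K_{X,p}[s]/(s^N-t)$, split the filtration, and let $\Psi_p(g_p)$ act on the $i$-th piece by $\zeta_N^{-a_i}$ together with $s\mapsto\zeta_N s$ on $R_p$, with $\mu_p$ the canonical rescaling $e_i\mapsto s^{a_i}e_i$ that trivialises the action after inverting $s$. One then checks that the two constructions are mutually inverse up to natural isomorphism: the splitting and the choice of $\mu_p$ are unique up to filtration-preserving automorphisms, hence up to isomorphism of parabolic bundles, and a morphism $(\theta,\{\sigma_p\})$ is $\Psi_p$-equivariant and $\mu_p$-compatible if and only if its reduction $\overline{\theta}$ on fibres carries eigenspaces into eigenspaces of the same eigenvalue, i.e. is a parabolic morphism preserving the weighted filtration.

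The step I expect to require the most care is the matching of the two quotient constructions. On the Mehta--Seshadri side a weight is simply a rational number, so $a/N$ and $am/(Nm)$ name the same object; on our side these correspond to branch data $\PP\le\PP'$ obtained by passing to a larger cyclic cover, and the assertion that the two algebraic parabolic bundles then define the same class in $\PV(X)$ is exactly the content of Propositions \ref{pullbackequiv} and \ref{functpullback}. Thus the freedom to enlarge the common denominator on the weight side corresponds precisely to the equivalence relation $\sim$ defining $\PV(X)$, and together with the compatibility of $\imath^*$ with the underlying vector bundle this yields a well-defined equivalence of categories. The remaining verifications---that distinct weights in $[0,1)$ force the filtration steps to be strict, that weight $0$ corresponds to the unramified part, and that the construction is independent of the chosen representative $(\V,\Psi,\mu)$---are routine once the system $(\zeta_N)$ has been fixed at the outset.
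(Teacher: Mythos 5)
Your forward construction---reducing $\Psi_p$ to a $k$-linear automorphism of the fibre $\V(p)$, diagonalising it using that $I_p$ is cyclic, and converting eigenvalues into weights in $\tfrac{1}{N_p}\Z\cap[0,1)$ via a fixed compatible system of roots of unity---is exactly the paper's argument (the paper writes the eigenvalues as $\exp(2\pi\sqrt{-1}a/N)$ and is less careful than you about the choice of $\zeta_N$, which is genuinely needed when $k\neq\C$). Where you diverge is the inverse functor: you build the algebraic parabolic structure directly from the local model $\K_p=\K_{X,p}[s]/(s^N-t)$, with $\Psi_p(g_p)$ acting diagonally by roots of unity and $\mu_p\colon e_i\mapsto s^{a_i}e_i$, whereas the paper constructs a global Galois cover $Y\to X$ with ramification index $N$ over the parabolic divisor, invokes the classical Mehta--Seshadri/Biswas equivalence between weighted parabolic bundles and $\Gamma$-bundles on $Y$, and composes with Theorem \ref{genequival}. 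Your route is more self-contained and makes the local normal form explicit; the paper's route inherits functoriality and the treatment of morphisms from results already established.

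The one place your argument has a real gap---though the paper's proof is silent on the same point---is the claim that $\mu_p$ ``plays no role'' and ``is determined up to an automorphism of the parabolic bundle.'' Without a normalisation this is false: take $\V=\OO_X$, $\Psi_p=\id\otimes\psi_p$, and $\mu_p$ equal to multiplication by a uniformiser $t$ of $\hat{\OO}_{X,p}$ (which is $I_p$-invariant, so condition (b) holds). This object is not isomorphic to $(\OO_X,\id\otimes\psi_p,1)$ in $\PV(X,\PP)$, since the required $\sigma_p$ would be multiplication by $t$ times a unit, which is not an automorphism of the lattice $\V_p\otimes_{\OO_{X,p}} R_p$; yet both objects have trivial filtration and weight $0$, so the functor you describe conflates them. (Under Theorem \ref{equival} they correspond to $\OO_Y$ and $\OO_Y(Ny)$, i.e.\ to the Mehta--Seshadri bundles $\OO_X$ and $\OO_X(p)$ with trivial structure.) The repair is to read the underlying bundle of the Mehta--Seshadri object off the lattice determined by $\mu_p$ rather than taking it to be $\V$ itself---equivalently, to show that every class in $\PV(X)$ admits a representative with $\mu_p$ in your normal form $e_i\mapsto s^{a_i}e_i$ with $0\le a_i<N$. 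That verification is the actual content of ``the two constructions are mutually inverse,'' and it is asserted rather than proved both in your writeup and in the paper.
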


\begin{proof}
For the sake of convenience we start with the case when we have a parabolic bundle $\V$ with parabolic structure supported on just a single point $\{p\}$. By our definition we have $\Psi : I \to \Aut_{\Ab}(\V_p \otimes_{\OO_{X, p}} R)$. Note that $I$ is a cyclic group and $R$ is a complete local ring with the residue field being $k$. So we get an induced map $\bar{\Psi} : I \to \Aut_{\Ab}(\V_p \otimes_{\OO_{X, p}} k)$. The linearity condition on $\Psi$ implies that $\bar{\Psi}$ is $k$-linear (by our definition the map induced by $\psi$ is $Id$ on $k$). Note that $\V_p \otimes_{\OO_{X, p}} k$ is nothing but the fibre of the vector bundle $\V$ at $p$ which we denote by $\V(p)$. Let $I = \langle \gamma \rangle$. Then we may choose a basis for $\V(p)$ consisting of eigenvectors of $\bar{\Psi}(\gamma)$. Clearly the eigenvalues of $\bar{\Psi}(\gamma)$ are $N$-th roots of unity where $N = |I|$ i.e. of the form $exp(2\pi\sqrt{-1}a/N)$, $0 \leq a \leq N, a \in \Z$. We can arrange these eigenvalues such that the integers $a$ are in increasing order . From this we can easily construct a decreasing filtration of $\V(p)$ along with weights given by the rational numbers $a/N$. Thus we recover the classical definition of parabolic bundle as given in \cite{MS}. In the general case we just apply the above procedure for each point in the support of the branch data.

Now starting from a "parabolic bundle with rational weights", first we write the weights as elements of $\frac{1}{N}\Z$ for some integer $N$. It is well known that we can construct Galois cover $Y \xrightarrow{f} X$ such that the branch locus $B$ contains the parabolic divisor and ramification index at each ramification point is $N$. Now we can treat the original parabolic bundle as having divisor $B$ by adding some points to the original parabolic divisor with trivial parabolic structure at those points. We know that there is an equivalence between the category of parabolic bundles with weights lying in $\frac{1}{N}\Z$ and parabolic divisor $B$ and the category of $\Gamma$-bundles on $Y$ where $\Gamma = \Gal(Y/X)$. But Theorem \ref{genequival} gives us that $\Vect_{\Gamma}(Y)$ is equivalent to $\PV(X, B_f)$ where $B_f$ is the branch data associated to $f$. Thus we get an element of $\PV(X)$. One easily checks that these two functors are inverse to each other.
\end{proof}

In what follows we will define pullback, tensor product, dual and pushforward operations on $\PV(X)$. We will also show that the functors defining equivalence between $\PV(X)$ and orbifold bundles on $X$ are well behaved with respect to these operations. 

\subsection{Pullback} \label{subsec:Pullback}

We would like to construct a pullback operation for parabolic bundles i.e. given a cover $f : Y \to X$ of curves, we want a functor $f^* : \PV(X) \to \PV(Y)$. Let $\V \in \PV(X)$ and we may assume, without loss of generality, that the associated branch data $\PP$ is $\geq B_f$. Here $B_f$ is the branch data associated to the map $f : Y \to X$. Then clearly the induced morphism $(Y, f^*\PP) \xrightarrow{f} (X, \PP)$ is \'etale. Now we can get a Galois \'etale cover $(Z, O) \to (X, \PP)$ such that it factors through $(Y, f^*\PP) \to (X, \PP)$. In other words we have a commutative diagram of formal orbifolds
\[
\xymatrix{
& (Z, O) \ar[d]^{g} \ar[ld]_{h} \\
(Y, f^*\PP) \ar[r]^{f}  & (X, \PP)
}
\]
where $g$ and $h$ are $G$-Galois \'etale and $H$-Galois \'etale covers respectively. Now we define $f^*\V$ as the equivalence class of the parabolic bundle $\SSS_{(Y, f^*\PP)}^Z \circ \TT_{(X, \PP)}^Z(\V)$. Of course we need to check that this operation is well defined. 

Firstly, if we made another choice of Galois \'etale cover $(Z^{'}, O) \to (X, \PP)$ factoring through $f$, then we need to show that $\SSS_{(Y, f^*\PP)}^Z \circ \TT_{(X, \PP)}^Z(\V) \cong \SSS_{(Y, f^*\PP)}^{Z^{'}} \circ \TT_{(X, \PP)}^{Z^{'}}(\V)$. Now we can construct a Galois \'etale cover $(\tilde{Z}, O) \to (X, \PP)$ which dominates both $(Z, O)$ and $(Z^{'}, O)$. Then we have
\begin{align*}
\TT_{(X, \PP)}^{\tilde{Z}} = \TT_{(Z, O)}^{\tilde{Z}} \circ \TT_{(X, \PP)}^{Z} \Rightarrow \SSS_{(Y, f^*\PP)}^{\tilde{Z}} \circ \TT_{(X, \PP)}^{\tilde{Z}} & = \SSS_{(Y, f^*\PP)}^{\tilde{Z}} \circ \TT_{(Z, O)}^{\tilde{Z}} \circ \TT_{(X, \PP)}^{Z} \\
& = \SSS_{(Y, f^*\PP)}^{Z} \circ \TT_{(X, \PP)}^{Z}
\end{align*}
Similarly we get $\SSS_{(Y, f^*\PP)}^{\tilde{Z}} \circ \TT_{(X, \PP)}^{\tilde{Z}} = \SSS_{(Y, f^*\PP)}^{Z^{'}} \circ \TT_{(X, \PP)}^{Z^{'}}$ and we have the required isomorphism.

Secondly, if we have $(\V, \Psi, \mu) \sim (\V^{\prime}, \Psi^{\prime}, \mu^{\prime})$, where $\V'$ has branch data $\PP' \geq B_f$, then we need to show that $\SSS_{(Y, f^*\PP)}^Z \circ \TT_{(X, \PP)}^Z(\V) \sim \SSS_{(Y, f^*\PP^{\prime})}^{Z^{\prime}} \circ \TT_{(X, \PP^{\prime})}^{Z^{\prime}}(\V^{\prime})$ where $Z^{\prime}$ has been chosen in the same manner as $Z$. Choose branch data $\QQ$ on $X$ such that $\QQ \geq \PP, \PP^{\prime}$ and $\imath^*(\V, \Psi, \mu) \cong \imath^{\prime*}(\V^{\prime}, \Psi^{\prime}, \mu^{\prime})$ where $\imath^*, \imath^{\prime*}$ are the functors described in Proposition \ref{functpullback}. Now we can get a commutative diagram :
\[
\xymatrix{
(Z, O) \ar[d] & (\tilde{Z}, O) \ar[l]_{\tilde{\imath}} \ar[d] \ar[r]^{\tilde{\imath}^{\prime}} & (Z^{\prime}, O) \ar[d] \\
(Y, f^*\PP) \ar[d] & (Y, f^*\QQ) \ar[l]_{\jmath} \ar[d] \ar[r]^{\jmath^{\prime}} & (Y, f^*\PP^{\prime}) \ar[d] \\
(X, \PP) & (X, \QQ) \ar[l]_{\imath} \ar[r]^{\imath^{\prime}} & (X, \PP^{\prime})
}
\]
where $(Z^{\prime}, O) \to (X, \PP^{\prime})$ and $(\tilde{Z}, O) \to (X, \QQ)$ is chosen in the same way as $(Z, O) \to (X, \PP)$. Moreover, like Proposition \ref{geomequiv} we can choose $(\tilde{Z}, O)$ such that $\tilde{\imath}$ and $\tilde{\imath}^{\prime}$ are Galois covers. Clearly $f^*\QQ \geq f^*\PP, f^*\PP^{\prime}$. Hence it suffices to show that
\begin{align*}
& \jmath^* \circ \SSS_{(Y, f^*\PP)}^Z \circ \TT_{(X, \PP)}^Z(\V) \cong \jmath^{\prime*} \circ \SSS_{(Y, f^*\PP^{\prime})}^{Z^{\prime}} \circ \TT_{(X, \PP^{\prime})}^{Z^{\prime}}(\V^{\prime}) \\
& \Leftrightarrow \SSS_{(Y, f^*\QQ)}^{\tilde{Z}} \circ \tilde{\imath}^* \circ \TT_{(X, \PP)}^Z(\V) \cong \SSS_{(Y, f^*\QQ)}^{\tilde{Z}} \circ \tilde{\imath}^{\prime*} \circ \TT_{(X, \PP^{\prime})}^{Z^{\prime}}(\V^{\prime}) & \text{(by Corollary \ref{pullbackg})} \\
& \Leftrightarrow \tilde{\imath}^* \circ \TT_{(X, \PP)}^Z(\V) \cong \tilde{\imath}^{\prime*} \circ \TT_{(X, \PP^{\prime})}^{Z^{\prime}}(\V^{\prime}) & \text{(as $\SSS_{(Y, f^*\QQ)}^{\tilde{Z}}$ is an equivalence)} \\
& \Leftrightarrow \TT_{(X, \QQ)}^{\tilde{Z}} \circ \imath^*(\V) \cong \TT_{(X, \QQ)}^{\tilde{Z}} \circ \imath^{\prime*}(\V^{\prime}) & \text{(by Corollary \ref{pullbackg})} \\
& \Leftrightarrow \imath^*(\V) \cong \imath^{\prime*}(\V^{\prime}) & \text{(as $\TT_{(X, \QQ)}^{\tilde{Z}}$ is an equivalence)}
\end{align*}
Hence we are in good shape and we can make the following definition
\begin{defn}
Let $ : Y \to X$ be a covering map between smooth projective curves. Then for any $\V \in \PV(X)$ the pullback bundle $f^*\V \in \PV(Y)$ is defined as the equivalence class of $\SSS_{(Y, f^*\PP)}^Z \circ \TT_{(X, \PP)}^Z(\V)$ where $\PP, Z$ has been chosen as above.
\end{defn}

\begin{rmk}
Let $\V \in \PV(X, \PP)$ where $\PP$ is a geometric branch data on $X$ and let $f : (Y, O) \to (X, \PP)$ be a $G$-Galois \'etale cover. Let $\E = \TT_{(X, \PP)}^Y(\V)$. Then by our construction, we have a natural isomorphism $f^*\V \cong \E$ as $G$-bundles.
\end{rmk}

\begin{prop}
Let $f : Y \to X$ be a covering map of smooth projective curves. Then we have a functor $f^* : \PV(X) \to \PV(Y)$. We also have $f^*(\OO_X^{\oplus n}) = \OO_Y^{\oplus n}$. Moreover given another cover $g : Z \to Y$ we have a natural isomorphism of functors $(fg)^* \cong g^*f^*$. 
\end{prop}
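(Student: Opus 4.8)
The plan is to establish the three assertions separately, taking for granted the object-level construction of $f^*$ together with its independence of the auxiliary choices, both of which have already been verified above. Throughout, the essential inputs are that $\TT_{(X,\PP)}^Z$ and $\SSS_{(X,\PP)}^Z$ are mutually inverse equivalences (Theorem \ref{genequival}), the transitivity relations for these functors recorded after the Notation, and the compatibility $\imath^* \circ \SSS_{(X,\PP)}^Y \cong \SSS_{(X,\PP')}^{Y'} \circ \tilde\imath^*$ of Corollary \ref{pullbackg}.

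\textbf{Functoriality.} First I would define $f^*$ on morphisms. Representing a morphism of $\PV(X)$ by a morphism $\varphi$ in some $\PV(X,\PP)$ with $\PP \ge B_f$ geometric, and choosing a connected Galois \'etale cover $(Z,O)\to(X,\PP)$ factoring through $(Y,f^*\PP)\to(X,\PP)$ as in the definition, I set $f^*\varphi$ to be the class of $\SSS_{(Y,f^*\PP)}^Z \circ \TT_{(X,\PP)}^Z(\varphi)$. Since $\TT_{(X,\PP)}^Z$ and $\SSS_{(Y,f^*\PP)}^Z$ are functors, their composite is a functor $\PV(X,\PP)\to\PV(Y,f^*\PP)$, so identities and compositions are preserved for fixed $\PP$ and $Z$. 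Independence of the class of $f^*\varphi$ from $\PP$ and $Z$ is proved exactly as the independence for objects was above, now reading the two displayed chains of isomorphisms there as natural isomorphisms of functors; the point is simply that every $\SSS$, $\TT$ and $\imath^*$ occurring is a functor and every comparison isomorphism between them is natural. This yields the functor $f^* : \PV(X)\to\PV(Y)$.

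\textbf{Trivial bundle.} By the remark following Theorem \ref{genequival}, $\TT_{(X,\PP)}^Z$ carries the trivial parabolic bundle $\OO_X^{\oplus n}$ to the trivial $G$-bundle on $Z$, and by the same remark $\SSS_{(Y,f^*\PP)}^Z$ carries the trivial $G$-bundle back to the trivial parabolic bundle on $(Y,f^*\PP)$, whose class in $\PV(Y)$ is $\OO_Y^{\oplus n}$. Hence $f^*(\OO_X^{\oplus n}) = \OO_Y^{\oplus n}$.

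\textbf{Composition.} Given $g : Z \to Y$, I would fix geometric branch data $\PP$ on $X$ with $\PP \ge B_{fg}$; then $B_f \le B_{fg} \le \PP$ and $g^*f^*\PP = (fg)^*\PP$ by functoriality of pullback of branch data, so both $(Y,f^*\PP)\to(X,\PP)$ and $(Z,(fg)^*\PP)\to(Y,f^*\PP)$ are \'etale. Choose a connected Galois \'etale cover $(W,O)\to(X,\PP)$ factoring through $(Z,(fg)^*\PP)$; it then also factors through $(Y,f^*\PP)$, and by the triangle remark near Theorem \ref{genequival} the induced covers $W\to Y$ and $W\to Z$ are Galois \'etale over the respective orbifolds. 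Computing all three pullbacks with $W$ gives
\begin{align*}
f^*\V &\cong \SSS_{(Y,f^*\PP)}^W \circ \TT_{(X,\PP)}^W(\V), \\
(fg)^*\V &\cong \SSS_{(Z,(fg)^*\PP)}^W \circ \TT_{(X,\PP)}^W(\V), \\
g^*(f^*\V) &\cong \SSS_{(Z,(fg)^*\PP)}^W \circ \TT_{(Y,f^*\PP)}^W(f^*\V).
\end{align*}
Substituting the first line into the third and using $\TT_{(Y,f^*\PP)}^W \circ \SSS_{(Y,f^*\PP)}^W \cong \id$ yields
\begin{align*}
g^*(f^*\V) &\cong \SSS_{(Z,(fg)^*\PP)}^W \circ \TT_{(Y,f^*\PP)}^W \circ \SSS_{(Y,f^*\PP)}^W \circ \TT_{(X,\PP)}^W(\V) \\
&\cong \SSS_{(Z,(fg)^*\PP)}^W \circ \TT_{(X,\PP)}^W(\V) \cong (fg)^*\V,
\end{align*}
naturally in $\V$, which is the desired isomorphism of functors $(fg)^* \cong g^*f^*$. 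The hard part will not be any single isomorphism but the bookkeeping: arranging one cover $W$ that computes all three pullbacks at once, checking $g^*f^*\PP = (fg)^*\PP$ together with the \'etaleness and Galois properties of the intermediate covers, and phrasing the identifications as natural isomorphisms rather than isomorphisms of individual objects. Once this is set up, everything collapses to $\TT$ and $\SSS$ being inverse equivalences and to the transitivity relations already established, with no geometric input beyond Theorem \ref{genequival} and Corollary \ref{pullbackg}.
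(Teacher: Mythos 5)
Your proposal is correct and follows essentially the same route as the paper, whose own proof consists only of defining $f^*$ on morphisms via the same composite $\SSS_{(Y,f^*\PP)}^Z\circ\TT_{(X,\PP)}^Z$ and leaving the remaining verifications to the reader. Your treatment of the trivial bundle and of $(fg)^*\cong g^*f^*$ (choosing one cover $W$ computing all three pullbacks and collapsing via $\TT\circ\SSS\cong\id$ and the transitivity relations) is exactly the intended filling-in of those omitted details.
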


\begin{proof}
Given two objects of $\PV(X)$, we can always choose representatives for them with the same branch data. Thus a morphism in $\PV(X)$ can be represented by a morphism in $\PV(X, \PP)$ for some branch data $\PP$. So we define the functor $f^*$ for morphisms in the same way as defined for objects i.e. by the equivalence class of $\SSS_{(Y, f^*\PP)}^Z \circ \TT_{(X, \PP)}^Z$ applied to the morphism where $(Z, O)$ is a Galois \'etale cover of $(X, \PP)$ chosen as above. The rest of the details are left to the reader for verification.   
\end{proof}

\subsection{Tensor Operation}

First we give the definition when the parabolic bundles have the same branch data.

Let $(X, \PP)$ be a formal orbifold and $(\V_1, \{\Psi_{1x}\}_{x \in \Supp(\PP)}, \{\mu_{1x}\}_{x \in \Supp(\PP)})$ and $(\V_2, \allowbreak \{\Psi_{2x}\}_{x \in \Supp(\PP)}, \allowbreak \{\mu_{2x}\}_{x \in \Supp(\PP)})$ be two objects of $\PV(X, \PP)$. Define $\V = \V_1 \otimes_{\OO_X} \V_2$. We have the following canonical isomorphisms
\begin{align*}
(\V_{1x} \otimes_{\OO_{X, x}} \V_{2x}) \otimes_{\OO_{X, x}} R_x \cong (\V_{1x} \otimes_{\OO_{X, x}} R_x) \otimes_{R_x} (\V_{2x} \otimes_{\OO_{X, x}} R_x) \\
(\V_{1x} \otimes_{\OO_{X, x}} \V_{2x}) \otimes_{\OO_{X, x}} \PP(x) \cong (\V_{1x} \otimes_{\OO_{X, x}} \PP(x)) \otimes_{\PP(x)} (\V_{2x} \otimes_{\OO_{X, x}} \PP(x))
\end{align*}
for any $x \in \Supp(\PP)$. Using them define $\Psi : \Gal(\PP(x)/\K_{X, x}) \to \Aut_{\Ab}((\V_{1x} \otimes_{\OO_{X, x}} \V_{2x}) \otimes_{\OO_{X, x}} R_x)$ by $\Psi_x = \Psi_{1x} \otimes_{R_x} \Psi_{2x}$ and similarly define $\mu_x = \mu_{1x} \otimes_{\PP(x)} \mu_{2x}$.

\begin{lem}
$(\V, \{\Psi_x\}_{x \in \Supp(\PP)}, \{\mu_x\}_{x \in \Supp(\PP)})$ as defined above is an object of $\PV(X, \PP)$.
\end{lem}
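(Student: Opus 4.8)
The plan is to verify, for each $x \in \Supp(\PP)$, the two defining conditions of a parabolic structure in Definition \ref{parabolicdef}: the linearity condition (a) for $\Psi_x$ and the $I$-equivariance (patching) condition (b) for $\mu_x$. Everything rests on one bookkeeping fact about semilinear maps, which I would set up first.

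Fix $x \in \Supp(\PP)$ and write $I = \Gal(\PP(x)/\K_{X,x})$, $R = R_x$, $K = \PP(x)$, $\psi = \psi_x$. For $g \in I$ each $\Psi_{ix}(g)$ is $\psi(g)$-semilinear over $R$, i.e. additive with $\Psi_{ix}(g)(r \cdot m) = \psi(g)(r) \cdot \Psi_{ix}(g)(m)$. Two maps semilinear over the same ring automorphism have a well-defined $\otimes_R$: the assignment $m_1 \otimes_R m_2 \mapsto \Psi_{1x}(g)(m_1) \otimes_R \Psi_{2x}(g)(m_2)$ is $R$-balanced precisely because the common factor $\psi(g)(r)$ may be moved across $\otimes_R$. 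Hence $\Psi_x(g) = \Psi_{1x}(g) \otimes_R \Psi_{2x}(g)$ is well defined; since the defining formula is pointwise on simple tensors it respects composition, so $g \mapsto \Psi_x(g)$ is a homomorphism and each $\Psi_x(g)$ is again $\psi(g)$-semilinear over $R$. This last statement is exactly the linearity condition (a) for the tensor bundle.

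For condition (b) the crux is that, under the canonical isomorphism $(\V_{1x} \otimes_{\OO_{X,x}} \V_{2x}) \otimes_{\OO_{X,x}} K \cong (\V_{1x} \otimes_{\OO_{X,x}} K) \otimes_K (\V_{2x} \otimes_{\OO_{X,x}} K)$, both $I$-actions on $\V_x \otimes_{\OO_{X,x}} K$ split as tensor products. I would check on simple tensors that $\text{Id}_{\V_x} \otimes \psi_x^0(g)$ corresponds to $(\text{Id}_{\V_{1x}} \otimes \psi_x^0(g)) \otimes_K (\text{Id}_{\V_{2x}} \otimes \psi_x^0(g))$, and that $\Psi_x^0(g)$ corresponds to $\Psi_{1x}^0(g) \otimes_K \Psi_{2x}^0(g)$; both $K$-tensors are legitimate because all four maps are $\psi_x^0(g)$-semilinear over $K$. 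Granting this, and writing $\mu_x = \mu_{1x} \otimes_K \mu_{2x}$ with each $\mu_{ix}$ a $K$-linear isomorphism, condition (b) follows formally from the equivariance $\mu_{ix} \circ (\text{Id} \otimes \psi_x^0(g)) = \Psi_{ix}^0(g) \circ \mu_{ix}$ of the two factors:
\begin{align*}
\mu_x \circ (\text{Id}_{\V_x} \otimes \psi_x^0(g))
&= \big(\mu_{1x} \circ (\text{Id} \otimes \psi_x^0(g))\big) \otimes_K \big(\mu_{2x} \circ (\text{Id} \otimes \psi_x^0(g))\big) \\
&= \big(\Psi_{1x}^0(g) \circ \mu_{1x}\big) \otimes_K \big(\Psi_{2x}^0(g) \circ \mu_{2x}\big)
= \Psi_x^0(g) \circ \mu_x,
\end{align*}
using once more that the tensor product of maps respects composition.

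The only genuinely delicate point, and the step I would treat most carefully, is this semilinear bookkeeping: that the $\otimes_R$ (resp. $\otimes_K$) of two maps semilinear over a common automorphism is well defined, that such tensors compose as expected, and that the two prescribed $I$-actions decompose compatibly with the canonical isomorphisms above. Once these compatibilities are recorded, conditions (a) and (b) drop out formally as displayed, and therefore $(\V, \{\Psi_x\}_{x \in \Supp(\PP)}, \{\mu_x\}_{x \in \Supp(\PP)})$ is an object of $\PV(X, \PP)$.
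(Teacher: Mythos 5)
Your proof is correct and takes the same route the paper intends: the paper's own proof is a single sentence asserting that the required properties follow from the given parabolic structures and the stated canonical isomorphisms, and your write-up simply supplies the details behind that assertion. In particular, your key bookkeeping point --- that the tensor product over $R_x$ (resp.\ over $\PP(x)$) of two maps semilinear over the same automorphism $\psi_x(g)$ is well defined and respects composition --- is exactly the content the paper leaves implicit, so nothing further is needed.
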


\begin{proof}
The required properties follow from those of the given parabolic bundles keeping in mind the natural isomorphisms stated above.
\end{proof}

\begin{defn}
Let $(X, \PP)$ be a formal orbifold and $(\V_1, \{\Psi_{1x}\}_{x \in \Supp(\PP)}, \{\mu_{1x}\}_{x \in \Supp(\PP)})$ and $(\V_2, \allowbreak \{\Psi_{2x}\}_{x \in \Supp(\PP)}, \allowbreak \{\mu_{2x}\}_{x \in \Supp(\PP)})$ be two objects of $\PV(X, \PP)$. Then their tensor product, denoted by $(\V_1, \{\Psi_{1x}\}_{x \in \Supp(\PP)}, \{\mu_{1x}\}_{x \in \Supp(\PP)}) \otimes (\V_2, \{\Psi_{2x}\}_{x \in \Supp(\PP)} \{\mu_{2x}\}_{x \in \Supp(\PP)})$, is defined as the parabolic bundle $(\V, \{\Psi_x\}_{x \in \Supp(\PP)}, \{\mu_x\}_{x \in \Supp(\PP)})$ constructed in the above Lemma.
\end{defn}

\begin{prop} \label{tensorfunct}
Let $(X, \PP)$ be a geometric formal orbifold and let $\V_1, \V_2 \in \PV(X, \PP)$. Let $(Y, O)$ be a connected Galois \'etale cover of $(X, \PP)$ with Galois group G and let $\E_i = \TT_{(X, \PP)}^Y(\V_i)$ for $i = 1, 2$. Then $\TT_{(X, \PP)}^Y(\V_1 \otimes \V_2) \cong \E_1 \otimes \E_2$ as $G$-bundles. Equivalently $\SSS_{(X, \PP)}^Y(\E_1 \otimes \E_2) \cong \V_1 \otimes \V_2$ as parabolic bundles.
\end{prop}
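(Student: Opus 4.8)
The plan is to exploit the fact that $\TT_{(X,\PP)}^Y$ is defined, via Theorems \ref{equival} and \ref{genequival}, by the formal gluing of Theorem \ref{formalgluing}, together with the observation that tensor product commutes with the two base-change functors (restriction to $Y^0$ and completion along $S = \pi^{-1}(\Supp\PP)$) that enter that gluing. Concretely, a $G$-bundle on $Y$ is reconstructed from the triple $(\E|Y^0,\ \hat\E,\ \tau)$ lying in $\G(Y^0)\times_{\G(S^0)}\G(\hat S)$, so it suffices to show that each of the three data attached to $\TT_{(X,\PP)}^Y(\V_1\otimes\V_2)$ agrees with the corresponding datum of $\E_1\otimes\E_2$, and then to invoke the equivalence of Theorem \ref{formalgluing}.

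First I would treat the open part $Y^0$. By construction $\E_s|Y^0 \cong \pi^*(\V_s|X^0)$ with its natural $G$-action ($s=1,2$), while the tensor parabolic bundle has underlying vector bundle $\V_1\otimes_{\OO_X}\V_2$. Since pullback commutes with tensor products and the natural $G$-action on a pullback of a tensor is the tensor of the $G$-actions, we obtain $\TT_{(X,\PP)}^Y(\V_1\otimes\V_2)|Y^0 \cong \pi^*((\V_1\otimes\V_2)|X^0) \cong (\E_1\otimes\E_2)|Y^0$ as $G$-bundles. Next I would treat the formal part $\hat S$. Recall from Proposition \ref{paratoorbi} that on the distinguished component $Spec(R_1)$ one has $\Phi_1=\Psi_p$, and the remaining components are obtained by transport through the cover data $\{g_{1j}\},\{\theta_{1j}\}$ via $\Phi_j(b)=\theta_{1j}\circ\Phi_1(g_{1j}^{-1}bg_{1j})\circ\theta_{1j}^{-1}$. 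Writing $\Phi^{(s)}_j$ for the component data of $\E_s$ and $\Phi_j$ for that of the tensor, the definition $\Psi_p=\Psi_{1p}\otimes_{R_p}\Psi_{2p}$ gives $\Phi_1=\Phi^{(1)}_1\otimes_{R_1}\Phi^{(2)}_1$ on the distinguished component; since $\theta_{1j}$ and $\alpha_{1j}$ depend only on the cover and not on the bundle, transporting through the same $\theta_{1j}$ yields $\Phi_j=\Phi^{(1)}_j\otimes_{R_j}\Phi^{(2)}_j$ on every component. Hence $\hat\E$ for $\TT_{(X,\PP)}^Y(\V_1\otimes\V_2)$ is canonically $\hat\E_1\otimes\hat\E_2$ as a $G$-sheaf on $\hat S$. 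The identical argument handles the gluing isomorphism: from $\tau_1=\mu=\mu_{1p}\otimes_{\PP(p)}\mu_{2p}$ and $\tau_j=\theta_{1j}\circ\tau_1\circ\theta_{1j}^{-1}$ one reads off $\tau=\tau^{(1)}\otimes\tau^{(2)}$ on $S^0$.

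Finally I would assemble these compatibilities: the triple attached to $\TT_{(X,\PP)}^Y(\V_1\otimes\V_2)$ is precisely the componentwise tensor of the triples attached to $\E_1$ and $\E_2$. The step I expect to be the main obstacle is the claim that this tensor of triples is carried by the gluing equivalence of Theorem \ref{formalgluing} back to $\E_1\otimes\E_2$ — in other words, that the equivalence is \emph{monoidal}. This reduces to checking that the tensor product of coherent $G$-sheaves commutes with restriction to $Y^0$ and to $\hat S$ and with the canonical patching isomorphism on the overlap $S^0$, a naturality statement that holds because both tensor product and restriction are computed on the level of $\OO$-modules and the comparison maps are isomorphisms over $S^0$. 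Once this monoidality is established, $\TT_{(X,\PP)}^Y(\V_1\otimes\V_2)\cong\E_1\otimes\E_2$ follows immediately, and the \emph{equivalently} clause is obtained by applying the quasi-inverse $\SSS_{(X,\PP)}^Y$ to both sides.
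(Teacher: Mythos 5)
Your proposal is correct and follows essentially the same route as the paper, whose proof is the one-line observation that the two $G$-bundles have the same local data (on $Y^0$, on $\hat S$, and for the gluing isomorphism over $S^0$) once the canonical identification $(\V_{1x}\otimes\V_{2x})\otimes R_x\cong(\V_{1x}\otimes R_x)\otimes_{R_x}(\V_{2x}\otimes R_x)$ is taken into account. Your write-up simply spells out that observation component by component and makes explicit the (true, and worth recording) monoidality of the formal-gluing equivalence.
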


\begin{proof}
The first isomorphism follows from the fact that both the $G$-bundles have the same local data after we take into account the canonical isomorphism used in the definition above. The second isomorphism is obviously equivalent to the first. 
\end{proof}

\begin{prop} \label{tensorequi}
Let $\imath : (X, \QQ) \to (X, \PP)$ be a morphism of formal orbifolds. Then the functor $\imath^* : \PV(X, \PP) \to \PV(X, \QQ)$ commutes with the tensor operation i.e. if $\V_1, \V_2 \in \PV(X, \PP)$ then $\imath^*(\V_1 \otimes \V_2) \cong \imath^*\V_1 \otimes \imath^*\V_2$. 
\end{prop}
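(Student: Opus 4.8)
Since $\imath$ is induced by $Id_X$, the relation $\PP \le \QQ$ holds, and by construction $\imath^*$ leaves the underlying vector bundle unchanged while the tensor operation takes the tensor product of the underlying bundles. Consequently both $\imath^*(\V_1 \otimes \V_2)$ and $\imath^*\V_1 \otimes \imath^*\V_2$ have $\V_1 \otimes_{\OO_X} \V_2$ as their underlying vector bundle, and the desired isomorphism will be the identity on this bundle. It therefore suffices to verify, for each $x \in \Supp(\QQ)$, that the two parabolic structures (the action $\Psi'_x$ and the gluing datum $\mu'_x$) attached to $\V_1 \otimes \V_2$ by the two procedures agree.

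The plan is a direct local computation. Fix $x \in \Supp(\QQ)$, write $R_x, R'_x$ for the integral closures of $\hat{\OO}_{X,x}$ in $\PP(x), \QQ(x)$, and let $\bar{g} \in \Gal(\PP(x)/\K_{X,x})$ be the image of $g \in \Gal(\QQ(x)/\K_{X,x})$. The two relevant canonical isomorphisms --- the one defining $\imath^*$ (base change from $R_x$ to $R'_x$) and the one defining the tensor product (distributing $\otimes$ over base change) --- are both assembled from the standard module identifications
\begin{align*}
(\V_{1x} \otimes_{\OO_{X,x}} \V_{2x}) \otimes_{\OO_{X,x}} R'_x \cong (\V_{1x} \otimes_{\OO_{X,x}} R'_x) \otimes_{R'_x} (\V_{2x} \otimes_{\OO_{X,x}} R'_x),
\end{align*}
together with their factorization through $R_x$. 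I would evaluate both $\Psi'$-actions on a spanning element of the form $(v_1 \otimes v_2) \otimes s'$ with $v_i \in \V_{ix}$ and $s' \in R'_x$. For $\imath^*(\V_1 \otimes \V_2)$ one obtains $(\Psi_{1x}(\bar g)(v_1) \otimes \Psi_{2x}(\bar g)(v_2)) \otimes \psi'_x(g)(s')$ after unwinding $\Psi_x = \Psi_{1x} \otimes_{R_x} \Psi_{2x}$ and then the base change to $R'_x$; for $\imath^*\V_1 \otimes \imath^*\V_2$ one obtains the same element after unwinding $\Psi'_{1x} \otimes_{R'_x} \Psi'_{2x}$. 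The two expressions coincide because both scalar extensions factor through $R_x$ compatibly, so the placement of $\psi'_x(g)(s')$ is immaterial. An entirely parallel computation, using the analogous isomorphism for $\PP(x)$ and $\QQ(x)$ in place of $R_x$ and $R'_x$, shows that both procedures equip $\V_1 \otimes \V_2$ with the gluing datum $(\mu_{1x} \otimes_{\PP(x)} \mu_{2x}) \otimes_{\PP(x)} Id_{\QQ(x)} = \mu'_{1x} \otimes_{\QQ(x)} \mu'_{2x}$. Hence the two parabolic structures agree at every $x \in \Supp(\QQ)$, and the identity on $\V_1 \otimes_{\OO_X} \V_2$ is the required isomorphism in $\PV(X, \QQ)$.

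I expect the only genuine obstacle to be the bookkeeping: one must be scrupulous that the several canonical isomorphisms (commutativity and associativity of tensor products, and transitivity of base change $\OO_{X,x} \to R_x \to R'_x$) are chosen coherently, so that the two a priori different identifications of the module $(\V_{1x} \otimes_{\OO_{X,x}} \V_{2x}) \otimes_{\OO_{X,x}} R'_x$ are literally the same. Once these identifications are pinned down, the equality of the actions and of the gluing data is a formal consequence requiring no further input. As an alternative, when $\PP$ and $\QQ$ happen to be geometric one could instead deduce the statement from Proposition \ref{tensorfunct} together with Corollary \ref{pullbackg}, since pullback of $G$-bundles visibly commutes with their tensor product and $\TT$ is an equivalence; the direct argument above has the merit of not requiring geometricity.
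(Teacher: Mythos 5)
Your argument is correct, and it is precisely the ``routine check'' that the paper's proof consists of (the paper offers no further detail): both constructions are assembled from the same canonical base-change and tensor identifications, and evaluating the semilinear actions and gluing data on spanning elements $(v_1\otimes v_2)\otimes s'$ shows they coincide. Your alternative route via Proposition \ref{tensorfunct} and Corollary \ref{pullbackg} is also valid in the geometric case, but the direct computation is the intended one.
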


\begin{proof}
It is a routine check.
\end{proof}

Now we would like to extend the tensor product operation to the category $\PV(X)$. Given two objects in $\PV(X)$ we can choose representatives $\V_1 \in \PV(X, \PP)$ and $\V_2 \in \PV(X, \PP)$ with the same branch data $\PP$. Then we can consider the equivalence class of $\V_1 \otimes \V_2$ to be the tensor product of the given parabolic bundles. Let $\V_1^{\prime} \in \PV(X, \PP^{\prime})$ and $\V_2^{\prime} \in \PV(X, \PP^{\prime})$ be different representatives for the given parabolic bundles. Then we also have the tensor product $\V_1^{\prime} \otimes \V_2^{\prime}$.
\begin{lem}
$\V_1 \otimes \V_2 \sim \V_1^{\prime} \otimes \V_2^{\prime}$. 
\end{lem}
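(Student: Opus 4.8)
The plan is to reduce the statement to the compatibility of the pullback functor with the tensor product (Proposition \ref{tensorequi}), together with functoriality of pullback (Proposition \ref{functpullback}), after passing to a single branch data that dominates all four of the branch data appearing.

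First I would unwind the hypothesis. Since $\V_1, \V_1^{\prime}$ and $\V_2, \V_2^{\prime}$ are, respectively, different representatives of the same two objects of $\PV(X)$, we have $\V_1 \sim \V_1^{\prime}$ and $\V_2 \sim \V_2^{\prime}$. By the definition of $\sim$ there exist branch data $\QQ_1 \geq \PP, \PP^{\prime}$ and $\QQ_2 \geq \PP, \PP^{\prime}$ on $X$ such that, writing $\imath_1 : (X, \QQ_1) \to (X, \PP)$, $\imath_1^{\prime} : (X, \QQ_1) \to (X, \PP^{\prime})$, $\imath_2 : (X, \QQ_2) \to (X, \PP)$, $\imath_2^{\prime} : (X, \QQ_2) \to (X, \PP^{\prime})$ for the maps induced by $\id_X$, one has $\imath_1^*\V_1 \cong \imath_1^{\prime *}\V_1^{\prime}$ in $\PV(X, \QQ_1)$ and $\imath_2^*\V_2 \cong \imath_2^{\prime *}\V_2^{\prime}$ in $\PV(X, \QQ_2)$.

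Next I would form the compositum $\RR = \QQ_1\QQ_2$, so that $\RR \geq \QQ_1, \QQ_2$ and hence $\RR \geq \PP, \PP^{\prime}$. Let $\jmath : (X, \RR) \to (X, \PP)$ and $\jmath^{\prime} : (X, \RR) \to (X, \PP^{\prime})$ be the maps induced by $\id_X$, and let $h_1 : (X, \RR) \to (X, \QQ_1)$, $h_2 : (X, \RR) \to (X, \QQ_2)$ likewise. Since every map in sight is induced by $\id_X$, they compose as $\imath_1 h_1 = \jmath = \imath_2 h_2$ and $\imath_1^{\prime} h_1 = \jmath^{\prime} = \imath_2^{\prime} h_2$. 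Applying $h_1^*$ to the first isomorphism and invoking Proposition \ref{functpullback} gives
\[
\jmath^*\V_1 \cong h_1^*\imath_1^*\V_1 \cong h_1^*\imath_1^{\prime *}\V_1^{\prime} \cong \jmath^{\prime *}\V_1^{\prime},
\]
and the same argument with $h_2$ yields $\jmath^*\V_2 \cong \jmath^{\prime *}\V_2^{\prime}$.

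Finally, Proposition \ref{tensorequi} lets me compute
\[
\jmath^*(\V_1 \otimes \V_2) \cong \jmath^*\V_1 \otimes \jmath^*\V_2 \cong \jmath^{\prime *}\V_1^{\prime} \otimes \jmath^{\prime *}\V_2^{\prime} \cong \jmath^{\prime *}(\V_1^{\prime} \otimes \V_2^{\prime}),
\]
and by the definition of $\sim$ with common dominating branch data $\RR$ this gives $\V_1 \otimes \V_2 \sim \V_1^{\prime} \otimes \V_2^{\prime}$. The only real work is bookkeeping: checking that the compositum $\RR$ dominates all four branch data and that the various maps induced by $\id_X$ compose consistently, so that the two isomorphisms living on $(X, \QQ_1)$ and $(X, \QQ_2)$ can be transported to the single curve $(X, \RR)$. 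All the substantive content is carried by Propositions \ref{functpullback} and \ref{tensorequi}, so I do not expect any genuine obstacle.
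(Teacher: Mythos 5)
Your proposal is correct and follows essentially the same route as the paper: reduce to a common dominating branch data and invoke Proposition \ref{tensorequi} (compatibility of $\imath^*$ with tensor product). The only difference is that you justify, via the compositum $\RR = \QQ_1\QQ_2$ and Proposition \ref{functpullback}, the existence of a single branch data witnessing both equivalences $\V_1 \sim \V_1^{\prime}$ and $\V_2 \sim \V_2^{\prime}$ simultaneously --- a bookkeeping step the paper's one-line proof leaves implicit.
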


\begin{proof}
As $\V_1 \sim \V_1^{\prime}$, by definition there is a branch data $\QQ \geq \PP, \PP^{\prime}$ such that $\imath^*\V_1 \cong \jmath^*\V_1^{\prime}$ and $\imath^*\V_2 \cong \jmath^*\V_2^{\prime}$. Here $\imath, \jmath$ are the morphisms inuced by $Id_X$ from $(X, \QQ)$ to $(X, \PP), (X, \PP^{\prime})$. Then the statement follows from Proposition \ref{tensorequi}.
\end{proof}
Using this Lemma we can make the following definition.
\begin{defn}
Let $X$ be a smooth projective curve. Then we define a tensor product of two objects in $\PV(X)$ as follows : let $\V_1 \in \PV(X, \PP)$ and $\V_2 \in \PV(X, \PP)$ be representatives of these two objects. Then the equivalence class of $\V_1 \otimes \V_2$ is defined to be the tensor product of the given objects.
\end{defn}

\begin{prop}
\begin{itemize}
\item[(i)] The tensor product operation defined on $\PV(X)$ is compatible with the tensor product operation on $\PV(X, \PP)$ for any branch data $\PP$.
\item[(ii)] For any covering morphism $f : Y \to X$ of smooth projective curves, the pullback functor $f^* : \PV(X) \to \PV(Y)$ commutes with the tensor product operation.
\end{itemize}
\end{prop}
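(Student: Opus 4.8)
The plan is to derive both statements from the tensor compatibility of the equivalence functors $\TT$ and $\SSS$ recorded in Proposition \ref{tensorfunct}, together with the well-definedness of the tensor and pullback operations on $\PV(X)$ established just above. For part (i), I would observe that the assertion is essentially a restatement of the definition. Given two objects of $\PV(X)$ admitting representatives $\V_1, \V_2 \in \PV(X, \PP)$ with common branch data $\PP$, the tensor product in $\PV(X)$ is by definition the equivalence class of $\V_1 \otimes \V_2$ computed inside $\PV(X, \PP)$. Thus the canonical functor $\PV(X, \PP) \to \PV(X)$ sending a bundle to its equivalence class carries $\V_1 \otimes \V_2$ to the tensor product of the images of $\V_1$ and $\V_2$, which is exactly the compatibility claimed. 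The only input needed is the preceding Lemma, which guarantees the equivalence class is independent of the chosen representatives; no further computation is required.

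For part (ii), I would fix two objects of $\PV(X)$ and choose representatives $\V_1, \V_2$ lying in a single category $\PV(X, \PP)$ with $\PP$ geometric and satisfying $\PP \geq B_f$; this is possible since every branch data is dominated by a geometric one and we may always enlarge $\PP$, using $\imath^*$ (Proposition \ref{functpullback}) together with its compatibility with tensor (Proposition \ref{tensorequi}) to transport the representatives. Following the construction of the pullback, I would select a connected Galois \'etale cover $(Z, O) \to (X, \PP)$ factoring as $(Z, O) \xrightarrow{h} (Y, f^*\PP) \xrightarrow{f} (X, \PP)$, with $g := f \circ h$ a $G$-Galois \'etale cover and $h$ an $H$-Galois \'etale cover, so that $f^*\V_i$ is represented in $\PV(Y, f^*\PP)$ by $\SSS_{(Y, f^*\PP)}^Z \circ \TT_{(X, \PP)}^Z(\V_i)$. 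The key point is that $(Z, O)$ is simultaneously a connected Galois \'etale cover of the geometric orbifolds $(X, \PP)$ and $(Y, f^*\PP)$, so that Proposition \ref{tensorfunct} applies to both.

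I would then chain the isomorphisms. Applying Proposition \ref{tensorfunct} to $(X, \PP)$ with cover $Z$ gives $\TT_{(X,\PP)}^Z(\V_1 \otimes \V_2) \cong \TT_{(X,\PP)}^Z(\V_1) \otimes \TT_{(X,\PP)}^Z(\V_2)$ as $G$-bundles on $Z$. Applying $\SSS_{(Y, f^*\PP)}^Z$ and then invoking Proposition \ref{tensorfunct} for $(Y, f^*\PP)$ with cover $Z$ (the statement that $\SSS$ commutes with tensor) yields
\[
\SSS_{(Y,f^*\PP)}^Z\big(\TT_{(X,\PP)}^Z(\V_1) \otimes \TT_{(X,\PP)}^Z(\V_2)\big) \cong \SSS_{(Y,f^*\PP)}^Z\big(\TT_{(X,\PP)}^Z(\V_1)\big) \otimes \SSS_{(Y,f^*\PP)}^Z\big(\TT_{(X,\PP)}^Z(\V_2)\big).
\]
Reading off the definitions, the left-hand side represents $f^*(\V_1 \otimes \V_2)$ and the right-hand side represents $f^*\V_1 \otimes f^*\V_2$, where I use part (i) applied to $\PV(Y, f^*\PP)$ to identify the tensor of the representatives with the tensor in $\PV(Y)$. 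Passing to equivalence classes gives the desired isomorphism $f^*(\V_1 \otimes \V_2) \cong f^*\V_1 \otimes f^*\V_2$ in $\PV(Y)$.

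The main obstacle is bookkeeping rather than mathematics: I must ensure that a single pair $(\PP, Z)$ serves for both bundles and for both the $\TT$ and $\SSS$ stages, and that the resulting equivalence classes are independent of these choices. Independence of $\PP$ and of $Z$ is already guaranteed by the well-definedness arguments for $f^*$ and for the tensor product on $\PV(X)$, together with the relation $\TT_{(X,\PP)}^{\tilde Z} = \TT_{(Y,O)}^{\tilde Z} \circ \TT_{(X,\PP)}^Y$, its analogues, and Corollary \ref{pullbackg}. The only care needed is to phrase the chain of natural isomorphisms above so that it is compatible with passing to a common refinement $\tilde Z$ of two competing choices, which is routine given these relations.
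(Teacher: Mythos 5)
Your proposal is correct and follows essentially the same route as the paper: part (i) is immediate from the definition of the tensor product on $\PV(X)$ via representatives with common branch data, and part (ii) reduces to the compatibility of the defining functors with tensor products (Propositions \ref{tensorfunct} and \ref{tensorequi}). The paper's own proof is a two-line citation of these facts; yours simply spells out the chain of isomorphisms explicitly, which is a faithful expansion rather than a different argument.
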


\begin{proof}
The statement in $(i)$ holds by our construction. The statement in $(ii)$ holds because of Proposition \ref{tensorequi}.
\end{proof}

\subsection{Pushforward}

Given a covering morphism of smooth projective curves $f : Y \to X$ we would like to define a direct image functor $f_* : \PV(Y) \to \PV(X)$ which extends the usual functor $f_* : \Vect(Y) \to \Vect(X)$.

Given a branch data $\QQ$ on $Y$ we can find a branch data $\PP$ on $X$ such that $\PP \geq B_f$ and $f^*\PP \geq \QQ$. In view of this fact, given a object of $\PV(Y)$ we may choose a representative with branch data of the form $f^*\PP$ such that $\PP \geq B_f$. Let $\W \in \PV(Y, f^*\PP)$ be the chosen representative. Choose $(Z, O) \to (X, \PP)$, a Galois \'etale cover with Galois group $G$. Consider the normalization of the fibre product $\tilde{Z} :=
\widetilde{Z \times_X Y}$. Then $\tilde{Z} \to Z$ is \'etale, though not necessarily connected. Moreover $(\tilde{Z}, O) \to (Y, f^*\PP)$ is a Galois \'etale cover of formal orbifolds with Galois group $G$. So we have a commutative diagram of orbifolds
\[
\xymatrix{
(\tilde{Z}, O) \ar[d] \ar[r]^{\tilde{f}} & (Z, O) \ar[d] \\
(Y, f^*\PP) \ar[r]^{f}  & (X, \PP)
}
\]
Note that the usual direct image functor $\tilde{f}_*$ takes $\Vect_G(\tilde{Z})$ to $\Vect_G(Z)$. We define $f_*\W$ as the equivalence class of $\SSS_{(X, \PP)}^Z \circ \tilde{f}_* \circ \TT_{(Y, f^*\PP)}^{\tilde{Z}}(\W)$. Suppose we choose another representative $\W_1$ with branch data $f^*\PP_1$ where $\PP_1 \geq B_f$. As above we choose a Galois \'etale cover $(Z_1, O) \to (X, \PP_1)$ and construct $\tilde{Z_1}$. We have to show that $\SSS_{(X, \PP)}^Z \circ \tilde{f}_* \circ \TT_{(Y, f^*\PP)}^{\tilde{Z}}(\W) \sim \SSS_{(X, \PP)}^{Z_1} \circ \tilde{f_1}_* \circ \TT_{(Y, f^*\PP_1)}^{\tilde{Z_1}}(\W_1)$.

Now we choose a branch data $\QQ \geq \PP, \PP_1$ such that $\jmath^*\W \cong \jmath_1^*\W_1$ where $(Y, f^*\QQ) \xrightarrow{\jmath} (Y, f^*\PP)$ and $(Y, f^*\QQ) \xrightarrow{\jmath_1} (Y, f^*\PP_1)$ are the natural maps. Now we have the following commutative diagram
\[
\xymatrix{
& (\tilde{Z}^{\prime}, O) \ar[rr]^{\tilde{f}^{\prime}} \ar@{-->}'[d][dd] \ar[ld]^{\tilde{\jmath}} & & (Z^{\prime}, O) \ar[ld]^{\tilde{\imath}} \ar[dd] \\
(\tilde{Z}, O) \ar[rr]^>>>>{\tilde{f}} \ar[dd] & & (Z, O) \ar[dd] \\
& (Y, f^*\QQ) \ar@{-->}'[r][rr] \ar@{-->}[ld]^{\jmath} & & (X, \QQ) \ar[ld]^{\imath} \\
(Y, f^*\PP) \ar[rr]_{f} & & (X, \PP)
} 
\]
where $Z^{\prime}$ and $\tilde{Z}^{\prime}$ are chosen in the same way as above with natural maps $\tilde{f}^{\prime}, \imath, \tilde{\imath}, \tilde{\jmath}$. Then
\begin{align*}
\imath^* \circ \SSS_{(X, \PP)}^Z \circ \tilde{f}_* \circ \TT_{(Y, f^*\PP)}^{\tilde{Z}}(\W) \cong \SSS_{(X, \QQ)}^{Z^{\prime}} \circ \tilde{\imath}^* \circ \tilde{f}_* \circ \TT_{(Y, f^*\PP)}^{\tilde{Z}}(\W) \\
\cong \SSS_{(X, \QQ)}^{Z^{\prime}} \circ \tilde{f}^{\prime}_* \circ \tilde{\jmath}^* \circ \TT_{(Y, f^*\PP)}^{\tilde{Z}}(\W) \cong \SSS_{(X, \QQ)}^{Z^{\prime}} \circ \tilde{f}^{\prime}_* \circ \TT_{(Y, f^*\QQ)}^{\tilde{Z}^{\prime}} \circ \jmath^*(\W)
\end{align*}
Now we can get a commutative cube as the one above for $(X, \PP_1)$ with the same choice of $Z^{\prime}, \tilde{Z}^{\prime}$ and in the same way deduce that
\begin{align*}
\imath_1^* \circ \SSS_{(X, \PP)}^{Z_1} \circ \tilde{f_1}_* \circ \TT_{(Y, f^*\PP_1)}^{\tilde{Z_1}}(\W_1) \cong \SSS_{(X, \QQ)}^{Z^{\prime}} \circ \tilde{f}^{\prime}_* \circ \TT_{(Y, f^*\QQ)}^{\tilde{Z}^{\prime}} \circ \jmath_1^*(\W_1)
\end{align*}
As $\jmath^*\W \cong \jmath_1^*\W_1$, clearly we have the required equivalence. So our definition makes sense.

\begin{defn}
Let $ : Y \to X$ be a covering map between smooth projective curves. Then for any $\W \in \PV(Y)$ the direct image bundle $f_*(\W)$ is defined as the equivalence class of $\SSS_{(X, \PP)}^Z \circ \tilde{f}_* \circ \TT_{(Y, f^*\PP)}^{\tilde{Z}}(\W)$ where $\PP$ and $\tilde{f} : \tilde{Z} \to Z$ has been chosen as above.
\end{defn}

\begin{prop}
Let $ : Y \to X$ be a covering map between smooth projective curves. We have a functor $f_* : \PV(Y) \to \PV(X)$. Moreover given another cover $g : Z \to Y$ of curves, we have a natural isomorphism of functors $f_* \circ g_* \cong (f \circ g)_*$.
\end{prop}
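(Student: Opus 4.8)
The plan is to establish both assertions by reducing them to the corresponding facts for the ordinary equivariant direct image, exploiting that $\TT_{(\cdot,\cdot)}^{\cdot}$ and $\SSS_{(\cdot,\cdot)}^{\cdot}$ are mutually inverse equivalences (Theorem \ref{genequival}).

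First I would make $f_*$ into a functor by defining it on morphisms. Any morphism in $\PV(Y)$ can be represented by a morphism between objects of $\PV(Y, f^*\PP)$ for a common branch data $\PP$ on $X$ with $\PP \geq B_f$, since two objects of $\PV(Y)$ always admit representatives with the same branch data. I then declare $f_*$ on such a morphism to be its image under $\SSS_{(X,\PP)}^{T} \circ \tilde{f}_* \circ \TT_{(Y, f^*\PP)}^{\tilde T_Y}$, exactly the composite used on objects, where $(T, O) \to (X, \PP)$ is a connected Galois \'etale cover with group $G$ and $\tilde T_Y := \widetilde{T \times_X Y}$. As each of $\TT$, the usual $\tilde f_*$, and $\SSS$ is a functor, this assignment preserves identities and composition; and its independence (up to $\sim$) of the choices of $\PP$ and of $T$ is the same computation already carried out for objects, now applied verbatim to morphisms. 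This yields the functor $f_*$.

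For the composition law, fix $\W \in \PV(Z)$ and choose a geometric branch data $\PP$ on $X$ large enough that $\PP \geq B_f$, that $f^*\PP \geq B_g$, and that $(fg)^*\PP = g^*f^*\PP$ dominates the branch data of a representative of $\W$; set $\PP_Y = f^*\PP$ on $Y$. Fix a connected Galois \'etale cover $(T, O) \to (X, \PP)$ with group $G$ and form the normalized fibre products $\tilde T_Y := \widetilde{T \times_X Y}$ and $\tilde T_Z := \widetilde{\tilde T_Y \times_Y Z}$. By the property used in the pushforward construction, $(\tilde T_Y, O) \to (Y, f^*\PP)$ and $(\tilde T_Z, O) \to (Z, (fg)^*\PP)$ are Galois \'etale with the same group $G$, and associativity of normalized fibre products identifies $\tilde T_Z$ with $\widetilde{T \times_X Z}$, so this same cover computes $(fg)_*$. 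We thus obtain a tower $\tilde T_Z \xrightarrow{\tilde g} \tilde T_Y \xrightarrow{\tilde f} T$ of $G$-equivariant finite maps with $\tilde f \circ \tilde g = \widetilde{fg}$. With these choices $g_*\W \cong \SSS_{(Y, f^*\PP)}^{\tilde T_Y}(\tilde g_* \, \TT_{(Z, (fg)^*\PP)}^{\tilde T_Z}(\W))$, so applying the inverse equivalence $\TT_{(Y, f^*\PP)}^{\tilde T_Y}$ gives $\TT_{(Y, f^*\PP)}^{\tilde T_Y}(g_*\W) \cong \tilde g_* \, \TT_{(Z, (fg)^*\PP)}^{\tilde T_Z}(\W)$. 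Substituting into the definition of $f_*$ yields
\[
f_*(g_*\W) \cong \SSS_{(X, \PP)}^{T} \circ \tilde f_* \circ \tilde g_* \circ \TT_{(Z, (fg)^*\PP)}^{\tilde T_Z}(\W).
\]
Finally I invoke the composition law $\tilde f_* \circ \tilde g_* \cong (\tilde f \circ \tilde g)_* = (\widetilde{fg})_*$ for the ordinary direct image, which holds $G$-equivariantly because every map in the tower is $G$-equivariant; this gives $f_*(g_*\W) \cong \SSS_{(X, \PP)}^{T} \circ (\widetilde{fg})_* \circ \TT_{(Z, (fg)^*\PP)}^{\tilde T_Z}(\W) \cong (fg)_*\W$. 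Naturality in $\W$ follows since every isomorphism used is natural.

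I expect the main obstacle to be the compatible choice of covers: one must verify that $\widetilde{\tilde T_Y \times_Y Z}$ and $\widetilde{T \times_X Z}$ coincide as $G$-Galois \'etale covers, so that the single tower $\tilde T_Z \to \tilde T_Y \to T$ computes $g_*$, $f_*$ and $(fg)_*$ simultaneously with one group $G$, together with the $G$-equivariant composition law $\tilde f_* \tilde g_* \cong (\widetilde{fg})_*$ for finite direct images. Once these are in place the remaining steps are the formal cancellation of $\SSS \circ \TT$ and an appeal to the independence-of-choices argument already established for $f_*$ on objects.
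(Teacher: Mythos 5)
Your proposal is correct and follows essentially the same route as the paper: functoriality of $f_*$ on morphisms is read off from the definition, and the composition law is obtained by choosing a representative with branch data $g^*f^*\PP$ (with $\PP \geq B_f$, $f^*\PP \geq B_g$) and building the tower of Galois \'etale covers $\tilde T_Z \to \tilde T_Y \to T$ over $(Z, g^*f^*\PP) \to (Y, f^*\PP) \to (X,\PP)$, after which everything reduces to $\tilde f_* \circ \tilde g_* \cong (\tilde f \circ \tilde g)_*$ for equivariant direct images. You merely make explicit two points the paper leaves implicit, namely the identification of $\widetilde{\tilde T_Y \times_Y Z}$ with $\widetilde{T \times_X Z}$ and the cancellation $\TT \circ \SSS \cong \mathrm{id}$, which is a welcome amount of extra detail but not a different argument.
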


\begin{proof}
Functoriality of $f_*$ is clear from the definition. For showing the isomorphism of functors we just need to observe that given any object in $\PV(X, Z)$ we may choose representative with branch data of the form $g^*f^*\PP$ such that $\PP \geq B_f$ and $f^*\PP \geq B_g$. Further we can get a commutative diagram of formal orbifolds
\[
\xymatrix{
(\tilde{\tilde{U}}, O) \ar[r]^{\tilde{g}} \ar[d] & (\tilde{U}, O) \ar[r]^{\tilde{f}} \ar[d] & (U, O) \ar[d] \\
(Z, g^*f^*\PP) \ar[r]^{g} & (Y, f^*\PP) \ar[r]^{f} & (X, \PP)
}
\]
where the vertical maps are all Galois \'etale. Then the statement follows from the definition.
\end{proof}

\begin{prop}
Let $ : Y \to X$ be a covering map between smooth projective curves. Let $\V \in \PV(X)$ and $\W \in \PV(Y)$.
\begin{itemize}
\item[(i)] We have a natural isomorphism 
\begin{align*}
\Hom_{\PV(Y)}(f^*\V, \W) \cong \Hom_{\PV(X)}(\V, f_*\W)
\end{align*}
\item[(ii)] We have a natural isomorphism
\begin{align*}
f_*(f^*\V \otimes \W) \cong \V \otimes f_*\W
\end{align*}
\end{itemize}
\end{prop}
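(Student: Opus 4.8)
The plan is to reduce both assertions to standard facts about equivariant bundles on a common Galois cover, via the equivalence of Theorem \ref{genequival}. First I would fix a geometric branch data $\PP$ on $X$ with $\PP \geq B_f$, and represent $\V$ by an object of $\PV(X,\PP)$ and $\W$ by an object of $\PV(Y, f^*\PP)$; this is possible since any class in $\PV(X)$ (resp. $\PV(Y)$) has a representative with arbitrarily large geometric branch data, and the embedding property of the functors $\imath^*$ lets me compute the relevant Hom-sets at this fixed branch data. Next I would choose a $G$-Galois \'etale cover $(Z,O) \to (X,\PP)$ that factors through $(Y, f^*\PP)$; writing $H = \Gal(Z/Y) \leq G$, the cover $(Z,O) \to (Y, f^*\PP)$ is then $H$-Galois \'etale with $[G:H] = \deg f$. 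Setting $\E = \TT_{(X,\PP)}^Z(\V) \in \Vect_G(Z)$ and $\F = \TT_{(Y, f^*\PP)}^Z(\W) \in \Vect_H(Z)$, the construction of $f^*$ in Section \ref{subsec:Pullback} identifies $f^*\V$ with $\SSS_{(Y, f^*\PP)}^Z$ applied to $\E$ regarded as an $H$-bundle; that is, under $\TT_{(Y, f^*\PP)}^Z$ the functor $f^*$ becomes the restriction $\Res^G_H$ of the group action. By Propositions \ref{tensorfunct} and \ref{tensorequi} the tensor products in $\PV(X,\PP)$ and $\PV(Y, f^*\PP)$ correspond to tensor products of $G$- and $H$-bundles on $Z$, and $f^*$ commutes with tensor.

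The key remaining identification, and the main obstacle, is to match $f_*$ with the induction functor $\mathrm{Ind}_H^G \colon \Vect_H(Z) \to \Vect_G(Z)$. Taking $Z' = Z$ in the definition of the pushforward, one has that $\widetilde{Z \times_X Y}$ is a disjoint union of $[G:H]$ copies of $Z$, on which $G$ acts by permuting the copies transitively (the copy with stabiliser $H$ being the graph of $Z \to Y$, which carries the $H$-bundle $\F$), while $\tilde f$ restricts on each copy to a translate of $\mathrm{id}_Z$. Unwinding the local description of a $G$-bundle on a disjoint union (Lemma \ref{constructbundle}) and applying $\tilde f_*$ componentwise should then show $\TT_{(X,\PP)}^Z(f_*\W) \cong \mathrm{Ind}_H^G \F$, whose underlying $G$-bundle is $\bigoplus_{gH} (g^{-1})^*\F$ with its natural $G$-structure. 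I expect the careful bookkeeping of this step --- in particular checking that the choice $Z'=Z$ is admissible in the pushforward construction and that the resulting $G$-equivariant structure is precisely the induced one --- to be the delicate point.

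With the dictionary $f^* \leftrightarrow \Res^G_H$, $f_* \leftrightarrow \mathrm{Ind}_H^G$, $\otimes \leftrightarrow \otimes$ in hand, both statements follow from standard equivariant identities. For (i), the equivalences give $\Hom_{\PV(Y)}(f^*\V, \W) \cong \Hom_{\Vect_H(Z)}(\Res^G_H\E, \F)$ and $\Hom_{\PV(X)}(\V, f_*\W) \cong \Hom_{\Vect_G(Z)}(\E, \mathrm{Ind}_H^G\F)$, and Frobenius reciprocity (the adjunction $\Res^G_H \dashv \mathrm{Ind}_H^G$ for equivariant bundles, valid since $[G:H] < \infty$) supplies the natural isomorphism between the two. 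For (ii), using that $\TT$ and $\SSS$ commute with tensor, $f_*(f^*\V \otimes \W)$ corresponds to $\mathrm{Ind}_H^G(\Res^G_H\E \otimes \F)$ and $\V \otimes f_*\W$ to $\E \otimes \mathrm{Ind}_H^G\F$; the equivariant projection formula $\mathrm{Ind}_H^G(\Res^G_H\E \otimes \F) \cong \E \otimes \mathrm{Ind}_H^G\F$ --- which on underlying bundles amounts to $\bigoplus_{gH}(g^{-1})^*\E \otimes (g^{-1})^*\F \cong \E \otimes \bigoplus_{gH}(g^{-1})^*\F$, using the canonical isomorphisms $(g^{-1})^*\E \cong \E$ coming from the $G$-structure on $\E$ --- then gives the result. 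Finally I would check that these isomorphisms are natural in $\V$ and $\W$ and independent of the auxiliary choices of $\PP$ and $Z$, which is routine given Corollary \ref{pullbackg} and the independence results already established for $f^*$ and $f_*$.
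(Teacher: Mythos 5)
Your proposal is correct and follows essentially the same route as the paper: represent $\V$ and $\W$ with compatible branch data $\PP$ and $f^*\PP$, pass to a Galois \'etale cover of $(X,\PP)$ via the equivalences $\TT$ and $\SSS$, identify $f^*$ and $f_*$ with equivariant pullback and pushforward there, and conclude by adjunction (for (i)) and the projection formula (for (ii)). The only difference is one of packaging: the paper keeps the possibly disconnected cover $\tilde{Z}=\widetilde{Z\times_X Y}$ and states the key step as the $G$-equivariant adjunction $\Hom_{\Vect_G(\tilde{Z})}(\tilde{f}^*\E,\F)\cong\Hom_{\Vect_G(Z)}(\E,\tilde{f}_*\F)$ together with the projection formula for the finite $G$-equivariant map $\tilde{f}\colon\tilde{Z}\to Z$, whereas you collapse $\Vect_G(\tilde{Z})$ to $\Vect_H(Z)$ via the decomposition $\tilde{Z}\cong\coprod_{G/H}Z$ and invoke Frobenius reciprocity and the $\mathrm{Ind}_H^G$-projection formula --- which is the same statement read through that equivalence.
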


\begin{proof}
We choose a representative for $\V$ with branch data $\PP \geq B_f$ and a representative for $\W$ with representative $f^*\PP$. Then we fix a commutative diagram involving $(Y, f^*\PP) \to (X, \PP)$ as in the definition of the direct image functor. Note that $f^*\V \cong \SSS_{(Y, f^*\PP)}^{\tilde{Z}} \circ \tilde{f}^* \circ \TT_{(X, \PP)}^Z(\V)$ as parabolic bundles (by functoriality of pullback and the fact that for Galois \'etale covers the pullback functor is same as the functor $\TT$). Now 
\begin{align*}
\Hom_{\PV(Y)}(f^*\V, \W) & \cong \Hom_{\PV(Y)}(\SSS_{(Y, f^*\PP)}^{\tilde{Z}} \circ \tilde{f}^* \circ \TT_{(X, \PP)}^Z\V, \W) \\
& \cong \Hom_{\Vect_G(\tilde{Z})}(\tilde{f}^* \circ \TT_{(X, \PP)}^Z\V, \TT_{(Y, f^*\PP)}^{\tilde{Z}}\W) \\
& \cong \Hom_{\Vect_G(Z)}(\TT_{(X, \PP)}^Z\V, \tilde{f}_* \circ \TT_{(Y, f^*\PP)}^{\tilde{Z}}\W) \\
& \cong \Hom_{\PV(X)}(\V, \SSS_{(X, \PP)}^Z \circ \tilde{f}_* \circ \TT_{(Y, f^*\PP)}^{\tilde{Z}}\W) \\
& \cong \Hom_{\PV(X)}(\V, f_*\W)
\end{align*}
Here we have used that the functors $\TT_{(X, \PP)}^Z$ and $\SSS_{(Y, f^*\PP)}^{\tilde{Z}}$ are equivalences. We also used that the adjointness property holds for $G$-equivariant morphisms i.e. given a $G$-equivariant map $\tilde{f} : \tilde{Z} \to Z$, we have a natural isomorphism
\begin{align*}
\Hom_{\Vect_G(\tilde{Z})}(\tilde{f}^*\E, \F) \cong \Hom_{\Vect_G(Z)}(\E, \tilde{f}_*\F)
\end{align*}
Clearly the natural maps $\E \to \tilde{f}_*\tilde{f}^*\E$ and $\tilde{f}^*\tilde{f}_*\F \to \F$ are $G$-equivariant. Hence the same way as in the classical case we have the required isomorphism.

By definition we have
\begin{align*}
f_*(f^*\V \otimes \W) = \SSS_{(X, \PP)}^Z \circ \tilde{f}_* \circ \TT_{(Y, f^*\PP)}^{\tilde{Z}}(f^*\V \otimes \W) \\ \cong \SSS_{(X, \PP)}^Z \circ \tilde{f}_*(\tilde{f}^* \circ \TT_{(X, \PP)}^Z\V \otimes \TT_{(Y, f^*\PP)}^{\tilde{Z}}\W) \\
\cong \SSS_{(X, \PP)}^Z(\TT_{(X, \PP)}^Z\V \otimes \tilde{f}_* \circ \TT_{(Y, f^*\PP)}^{\tilde{Z}}\W) \\
\cong \V \otimes \SSS_{(X, \PP)}^Z \circ \tilde{f}_* \circ \TT_{(Y, f^*\PP)}^{\tilde{Z}}\W \cong \V \otimes f_*\W
\end{align*}
Here we have used that the functors $\TT_{(Y, f^*\PP)}^{\tilde{Z}}$ and $\SSS_{(X, \PP)}^Z$ commute with tensor product (see Proposition \ref{tensorfunct}). Also see Theorem 3.14, \cite{KP}.
\end{proof}

\subsection{Dual}

We construct the dual of a parabolic bundle using the equivalence obtained in Theorem \ref{genequival}. Given a parabolic bundle on $X$, we choose a representative $\V \in \PV(X, \PP)$. Fix a Galois \'etale cover $(Y, O) \to (X, \PP)$. Then we have the corresponding orbifold bundle $\TT_{(X, \PP)}^Y(\V)$ on $Y$. Consider the dual orbifold bundle $\TT_{(X, \PP)}^Y(\V)^*$. We define the equivalence class of $\SSS_{(X, \PP)}^Y(\TT_{(X, \PP)}^Y(\V)^*)$ to be the dual of the given parabolic bundle. As before one easily checks (using Proposition \ref{geomequiv}) that this definition does not depend on our choice of the representative or the Galois \'etale cover.

\begin{defn}
Let $\V$ be a parabolic bundle on $X$. The dual parabolic bundle of $\V$, denoted by $\V^*$, is defined to be the equivalence class of $\SSS_{(X, \PP)}^Y(\TT_{(X, \PP)}^Y(\V)^*)$ where $\PP, Y$ has been chosen as above.  
\end{defn}

\begin{rmk}
Note that the underlying vector bundle of $\V^*$ is the dual of the underlying vector bundle of $\V$ and if $\V$ has a representative in $\PV(X,\PP)$ then there is a natural representative of $\V^*$  in  $\PV(X,\PP)$ as well.
\end{rmk}

Now we would like to figure out what the local data for the dual parabolic bundle looks like in terms of the original one. Let us denote the dual parabolic bundle by $(\V^*, \Psi^*, \mu^*)$. Choose $y \in Y, f(y) = x$ such that $x \in \Supp(\PP)$. We have canonical isomorphisms 
\begin{align*}
(\widehat{\E^*})_y \cong (\hat{\E}_y)^* \cong \Hom_{R_y}(\V_x \otimes_{\OO_{X, x}} R_y, R_y) \cong \V_x^* \otimes_{\OO_{X, x}} R_y
\end{align*}
where $\V_x^*$ is the dual of $\V_x$ as $\OO_{X, x}$-module. Fix a basis $\{v_i\}_{i=1}^n$ of $\V_x$ as an $\OO_{X, x}$-module. We also fix the corresponding dual basis $\{v_i^*\}_{i=1}^n$ of $\V_x^*$. Note that $\{v_i \otimes 1\}_{i=1}^n$ and $\{v_i^* \otimes 1\}_{i=1}^n$ gives $R_y$ bases for $\hat{\E}_y = \V_x \otimes_{\OO_{X, x}} R_y$ and $(\widehat{\E^*})_y = \V_x^* \otimes_{\OO_{X, x}} R_y$ respectively. Let $v = \sum_{i=1}^n a_i(v_i \otimes 1)$ be an arbitrary element of $\V_x \otimes_{\OO_{X, x}} R_y$, where $a_i \in R_y \forall i$, which can be thought of as the column vector $(a_1, \ldots, a_n)^{\tr}$. For any $g \in \Gal(\K_{Y, y}/\K_{X, x})$, let $((\alpha_{ij}^g))$ and $((\beta_{ij}^g))$ be the matrix representatives of semilinear maps $\Psi_x(g)$ and $\Psi_x^*(g)$ respectively with respect to these bases. More precisely $\Psi_x(g)(v)=((\alpha_{ij}^g)) (\psi_x(g)(a_1), \ldots, \psi_x(g)(a_n))^{tr}$ and $\Psi^*_x(g)(f)=((\beta_{ij}^g)) (\psi_x(g)(b_1), \ldots, \psi_x(g)(b_n))^{tr}$ where $f=\sum b_i(v_i^*\otimes 1)$. 

We know that action of the inertia groups on the stalks $\hat{\E}_y$ and $(\widehat{\E^*})_y$ of the $G$-bundles $\E$ and its dual $\E^*$ on $Y$ satisfy the relation $\Psi_x^*(g)(f)(v) = f(\Psi_x(g^{-1})(v))$ where $f \in (\widehat{\E^*})_y, v \in \hat{\E}_y$. Even for the semilinear maps, like the linear case, it follows that $((\beta_{ij}^g)) = \left((\alpha_{ij}^{g^{-1}})\right)^{\tr}$. Denoting $\left((\alpha_{ij}^{g^{-1}})\right)^{\tr}$ by $\Psi_x(g)^*$, the above relation can be written as $\Psi_x^*(g) = \Psi_x(g)^*$. It is easy to check that this relation does not depend on the choice of basis. In a similar fashion we deduce that if $((\mu_{ij}))$ is the matrix corresponding to the $G_x$-equivariant isomorphism $\mu_x$ then the matrix corresponding to $\mu_x^*$ is given by $((\mu_{ij}))^{\tr}$. All these can be summarized as
\begin{prop}
Let $\V$ be a parabolic bundle on $X$. Then the dual parabolic bundle $\V^*$ is same as the equivalence class of the parabolic bundle $(\V^{\vee}, \Psi^*, \mu^*)$ where $\V^{\vee}$ is the dual vector bundle of $\V$ and $\Psi^*$ and $\mu^*$ are as defined above.
\end{prop}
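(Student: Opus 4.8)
The plan is to unwind the definition of the dual and then match both the underlying bundle and the local data against the explicit description obtained in the paragraphs above. Fix a geometric branch data $\PP$ with $\V \in \PV(X, \PP)$, a connected Galois \'etale cover $\pi : (Y, O) \to (X, \PP)$ with group $G$, and set $\E = \TT_{(X, \PP)}^Y(\V)$, a $G$-bundle on $Y$. By definition $\V^*$ is the class of $\SSS_{(X, \PP)}^Y(\E^*)$, so by Proposition \ref{orbitopara} it is represented by the triple whose underlying bundle is $(\pi_* \E^*)^G$ and whose local datum at each $x \in \Supp(\PP)$ consists of the inertia action on the completed stalk $(\widehat{\E^*})_y$ (for $y$ over $x$) together with the gluing isomorphism attached to $\E^*$. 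Hence there are exactly two things to prove: that $(\pi_* \E^*)^G \cong \V^{\vee}$ as vector bundles, and that under this identification the local data are precisely the $\Psi^*$ and $\mu^*$ defined above.

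For the underlying bundle I would use the evaluation pairing $\E \otimes_{\OO_Y} \E^* \to \OO_Y$, which is $G$-equivariant by the definition of the $G$-action on $\E^*$; pushing forward and restricting to invariants produces a pairing $\V \otimes_{\OO_X} (\pi_* \E^*)^G \to (\pi_* \OO_Y)^G = \OO_X$ and hence a natural morphism $(\pi_* \E^*)^G \to \V^{\vee}$. It remains to show this is an isomorphism on the completed stalk at every closed point. Over $X^0 = X - \Supp(\PP)$ the cover is \'etale, $\E|_{Y^0} = \pi^*(\V|_{X^0})$ and $\E^*|_{Y^0} = \pi^*(\V^{\vee}|_{X^0})$, so the pairing is perfect by \'etale descent. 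At a branch point $x$ the canonical isomorphism $(\widehat{\E^*})_y \cong \V_x^* \otimes_{\OO_{X, x}} R_y$ recorded above, combined with the observation that condition (b) of the parabolic structure is inherited by the contragredient representation (with $\mu^*$ furnishing the required $I$-equivariant isomorphism), shows that the inertia invariants of $\V_x^* \otimes_{\OO_{X, x}} R_y$ reproduce $\V_x^* \otimes_{\OO_{X, x}} \hat{\OO}_{X, x}$, which is the completed stalk of $\V^{\vee}$; the pairing is therefore perfect there too, and the morphism is an isomorphism.

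Finally, the identification of the local data has essentially been carried out before the statement: the relation $\Psi_x^*(g)(f)(v) = f(\Psi_x(g^{-1})(v))$ for the dual action, written in matrix form against dual bases, gives $((\beta_{ij}^g)) = ((\alpha_{ij}^{g^{-1}}))^{\tr}$, so the inertia action on $(\widehat{\E^*})_y$ is $\Psi^*$ (independently of the basis), while the same computation applied to $\mu$ yields the transpose $((\mu_{ij}))^{\tr} = \mu^*$ for the gluing of $\E^*$. Substituting these into Proposition \ref{orbitopara} identifies $\SSS_{(X, \PP)}^Y(\E^*)$ with $(\V^{\vee}, \Psi^*, \mu^*)$, as claimed. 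The one genuinely nontrivial point, and hence the main obstacle, is the behaviour of $(\pi_* \E^*)^G$ at the ramification points: one must know that passing to inertia invariants of the dual stalk again yields a locally free sheaf of full rank, which is exactly where the defining condition (b) for a parabolic structure, and its stability under dualization, is used.
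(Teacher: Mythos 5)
Your treatment of the local data --- the contragredient action, the relation $\Psi_x^*(g)(f)(v) = f(\Psi_x(g^{-1})(v))$, and the transposed matrices for $\Psi^*$ and $\mu^*$ --- is exactly what the paper does; indeed the paper offers the proposition as a summary of that computation and gives no separate proof, with the identification of the underlying bundle relegated to the unproved preceding remark that ``the underlying vector bundle of $\V^*$ is the dual of the underlying vector bundle of $\V$.'' The genuine gap in your write-up is in the extra step you add to justify that remark. The $G$-invariant pairing $(\pi_*\E)^G \otimes_{\OO_X} (\pi_*\E^*)^G \to \OO_X$ is \emph{not} perfect at the branch points, and the claim that the inertia invariants of $\V_x^{\vee} \otimes_{\OO_{X,x}} R_y$ reproduce $\V_x^{\vee} \otimes_{\OO_{X,x}} \hat{\OO}_{X,x}$ fails as soon as the inertia action is nontrivially twisted. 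Concretely, take the tame degree-two cover $t \mapsto t^2 = s$ (characteristic $\neq 2$) and $\E = \OO_Y e$ with the nontrivial element acting by $e \mapsto -e$: the invariants of $\hat{\E}_y$ are $t\,k[[s]]e$, those of $(\widehat{\E^*})_y$ are $t\,k[[s]]e^*$ (condition (b) holds, with $\mu$ given by multiplication by $t$), and the evaluation pairing of these lattices lands in $s\,k[[s]]$, not $k[[s]]$. Globally, for $\mathbb{P}^1 \to \mathbb{P}^1$ this gives $(\pi_*\E)^G \cong (\pi_*\E^*)^G \cong \OO_X(-1)$, which is not the dual of $\OO_X(-1)$; so the map you construct is injective but not surjective at ramification points.

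You correctly single out this point as ``the one genuinely nontrivial'' obstacle, but condition (b) cannot repair it: it only furnishes an $I$-equivariant isomorphism over the fraction field $\K$, i.e.\ it controls the generic fibre, whereas what is at stake is which lattice inside $\V_x^{\vee} \otimes \K$ the $I$-invariants of $\V_x^{\vee} \otimes R_y$ cut out --- and for a twisted action this lattice genuinely differs from $\V_x^{\vee} \otimes \hat{\OO}_{X,x}$. (This is the same phenomenon that forces the classical parabolic dual to carry weights $1-\alpha$ rather than $-\alpha$.) So the portion of your argument that overlaps with the paper is fine, but the added argument for $(\pi_*\E^*)^G \cong \V^{\vee}$ does not go through as written; any honest proof of that identification has to confront the degeneration of the naive duality pairing at the ramification points rather than appeal to condition (b).
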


The usual results for dual vector bundles are also true for dual parabolic bundles.
\begin{prop}
Let $\V$ be a parabolic bundle on $X$.
\begin{itemize}
\item[(i)] $\V \otimes \V^* \cong \OO_X^{\oplus n^2}$ where $n = \rk(\V)$.
\item[(ii)] $(\V^*)^* \cong \V$ as parabolic bundles.
\item[(iii)] For any cover $f : Y \to X$ we have $f^*(\V^*) \cong (f^*\V)^*$ as parabolic bundles.
\end{itemize}
\end{prop}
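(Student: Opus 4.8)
The plan is to deduce all three isomorphisms from the corresponding elementary facts about $G$-bundles on a Galois \'etale cover, transported through the equivalence of Theorem \ref{genequival}. Fix a representative of $\V$ with geometric branch data $\PP$, a connected Galois \'etale cover $(W,O)\to(X,\PP)$ with Galois group $G$, and put $\E=\TT_{(X,\PP)}^W(\V)\in\Vect_G(W)$. The three structural compatibilities I would use are: that $\TT_{(X,\PP)}^W$ and its quasi-inverse $\SSS_{(X,\PP)}^W$ commute with tensor products (Proposition \ref{tensorfunct}); that the dual parabolic bundle was defined precisely so that $\TT_{(X,\PP)}^W(\V^*)\cong\E^*$, equivalently $\SSS_{(X,\PP)}^W(\E^*)\cong\V^*$; and that pullback is intertwined with $\TT$ and $\SSS$ (Corollary \ref{pullbackg}, together with the remark identifying $f^*\V$ with $\TT$ for \'etale covers). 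Each part of the proposition then becomes the image under $\SSS_{(X,\PP)}^W$ of a standard isomorphism of $G$-bundles, and the only real content is to check that the classical isomorphism is $G$-equivariant.

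For (ii), the biduality map $\E\arsim(\E^*)^*$ of vector bundles is manifestly compatible with the $G$-action, so applying the definition of the dual twice and then $\SSS_{(X,\PP)}^W$ gives $(\V^*)^*\cong\V$ in $\PV(X)$. For (i), Proposition \ref{tensorfunct} and the definition of the dual give $\TT_{(X,\PP)}^W(\V\otimes\V^*)\cong\E\otimes\E^*$ as $G$-bundles, so it suffices to produce a $G$-equivariant isomorphism $\E\otimes\E^*\cong\OO_W^{\oplus n^2}$ and apply $\SSS_{(X,\PP)}^W$. Here the content is exactly that the canonical trivialization is compatible with the $G$-action coming from $\E$; I would verify this on the local data around the ramification points using Lemma \ref{groupaction}, where the action on $\E\otimes\E^*$ is $\Psi_x\otimes\Psi_x^*$ with $\Psi_x^*(g)=\Psi_x(g)^*$ as computed in the discussion of duals preceding the proposition.

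For (iii) I would exploit the fact that a single cover can serve in all the relevant constructions. Choosing $\PP\geq B_f$ so that $(Y,f^*\PP)\to(X,\PP)$ is \'etale, and a Galois \'etale cover $(Z,O)\to(X,\PP)$ factoring through it\,---\,so that $(Z,O)\to(X,\PP)$ is $G$-Galois and $(Z,O)\to(Y,f^*\PP)$ is $H$-Galois with $H\leq G$\,---\,I set $\E=\TT_{(X,\PP)}^Z(\V)$. By the definition of $f^*$ and of the dual, both $f^*(\V^*)$ and $(f^*\V)^*$ are represented by $\SSS_{(Y,f^*\PP)}^Z$ applied to an $H$-bundle: for the former to the restriction of $\E^*$ to an $H$-bundle, and for the latter to the dual of the restriction of $\E$. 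Since restriction of the group and dualization of bundles visibly commute, these two $H$-bundles agree, and applying $\SSS_{(Y,f^*\PP)}^Z$ yields $f^*(\V^*)\cong(f^*\V)^*$. The main obstacle throughout is bookkeeping rather than conceptual: one must ensure that the chosen Galois \'etale covers and the induced maps fit into a single commutative diagram of formal orbifolds so that Proposition \ref{tensorfunct} and Corollary \ref{pullbackg} apply simultaneously, and that the resulting isomorphisms are independent of these choices\,---\,precisely the verifications already made when $f^*$ and the dual were shown to be well defined, so no genuinely new difficulty arises.
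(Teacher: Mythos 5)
Your proposal takes exactly the route the paper intends: the paper's entire proof of this proposition is the single sentence that everything is ``a routine check and follows from the corresponding results for orbifold bundles,'' and what you write is precisely that check spelled out --- transport each statement through $\TT_{(X,\PP)}^{W}$ and $\SSS_{(X,\PP)}^{W}$, verify $G$-equivariance of the classical isomorphism, and reuse the well-definedness bookkeeping already done for $f^*$, $\otimes$ and the dual. So there is no divergence in method; you simply supply the details the paper omits.

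One caveat, which is really a defect of the statement rather than of your argument but which your write-up inherits: in part (i) you say it suffices to produce a $G$-equivariant isomorphism $\E\otimes\E^*\cong\OO_W^{\oplus n^2}$ via ``the canonical trivialization.'' There is no canonical trivialization of $\E\otimes\E^*\cong\mathcal{E}nd(\E)$ for a general vector bundle $\E$ (e.g.\ $\E=\OO\oplus\OO(1)$ on $\mathbb{P}^1$ has $\mathcal{E}nd(\E)\not\cong\OO^{\oplus 4}$), so the ``corresponding result for orbifold bundles'' that (i) is supposed to follow from does not hold as stated; at best one gets a canonical evaluation pairing $\V\otimes\V^*\to\OO_X$, not a trivialization. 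Parts (ii) and (iii) are fine as you argue them.
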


\begin{proof}
All the proofs are just routine checks and follows from the corresponding results for orbifold bundles.
\end{proof}

\end{document}